\newcommand{\R}{{\mathbb{R}}}
\newcommand{\Z}{{\mathbb{Z}}}
\DeclareMathOperator{\Arf}{Arf}  
\newcommand{\cW}{\mathcal{W}}
\newcommand{\cI}{\mathcal I}
\newcommand{\dash}{{\mbox{--}}}
\newcommand{\tree}[3]{\text{\Large {$ {\text{\normalsize $ #1$}}-\!\!\!<^{#2}_{#3}$}}}
\newtheorem{proposition}{Proposition}[section]
\newtheorem{theorem}[proposition]{Theorem}
\newtheorem*{theorem*}{Theorem}
\newtheorem{lemma}[proposition]{Lemma}
\newtheorem{corollary}[proposition]{Corollary}
\theoremstyle{definition}
\newtheorem{example}[proposition]{Example}
\newtheorem{definition}[proposition]{Definition}
\theoremstyle{remark}
\newtheorem{remark}[proposition]{Remark}
\newcommand{\iinfty}{{\mathchoice
{\begin{minipage}{.15in}\includegraphics[width=.15in]{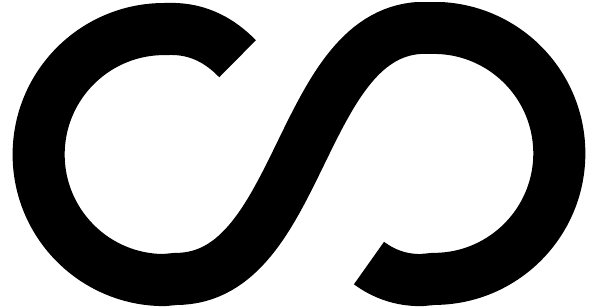}\end{minipage}}
{\begin{minipage}{.13in}\includegraphics[width=.13in]{infty2.pdf}\end{minipage}}
{\begin{minipage}{.11in}\includegraphics[width=.11in]{infty2.pdf}\end{minipage}}
{\begin{minipage}{.08in}\includegraphics[width=.08in]{infty2.pdf}\end{minipage}}
}}
\begin{document}

\markboth{Conant, Schneiderman and Teichner}
{Cochran invariants and Whitney Towers}


\title{COCHRAN'S $\beta^i$-INVARIANTS VIA TWISTED WHITNEY TOWERS}

\author{JIM CONANT, ROB SCHNEIDERMAN and PETER TEICHNER}

\address{Dept. of Mathematics, University of Tennessee, Knoxville;\\ Dept. of Mathematics and Computer Science, Lehman College, City University of New York;\\ Max-Planck-Institut f\"ur Mathematik, Bonn, and University of California, Berkeley.}

\maketitle

\begin{abstract}
We show that Tim Cochran's invariants $\beta^i(L)$ of a $2$-component link $L$ in the $3$--sphere can be computed as intersection invariants of certain 2-complexes in the $4$--ball with boundary $L$. These 2-complexes are special types of twisted Whitney towers, which we call {\em Cochran towers}, and which exhibit a new phenomenon: A Cochran tower of order $2k$ allows the computation of the $\beta^i$ invariants for all $i\leq k$, i.e.\ simultaneous extraction of invariants from a Whitney tower at multiple orders. This is in contrast with the order $n$ Milnor invariants  (requiring order $n$ Whitney towers) and consistent with Cochran's result that the $\beta^i(L)$ are integer lifts of certain Milnor invariants. 
\end{abstract}



\section{Introduction and statement of results}
In 1954, John Milnor defined his $\mu$-invariants of a link $L=(L_1,\dots, L_m)$ in 3-space  \cite{Mi} by looking inductively at the terms in the lower central series of the link group $\pi_1(\R^3 \smallsetminus L)$, and comparing with the link group of the unlink. For example, the {\em order} $0$ Milnor invariants are just the linking numbers $\mu_{ij}(L)$ between components $L_i$ and $L_j$ of the link $L$. Moreover, Milnor showed that $\mu_{123}$ detects the Borromean rings, a {\em Bing double} of the Hopf link, and that his higher-order invariants detect iterated Bing doublings of the Hopf link. 

The Milnor invariants $\mu_\cI(L)$ of {\em order} $n\geq 0$ are labeled by a multi-index $\cI=\{i_1 i_2\dots i_{n+2}\}$ with $i_k\in\{1,\dots,m\}$. They are integers, well-defined only modulo the gcd of the $\mu_{\cI'}(L)$ for all proper subindices $\cI'\subset \cI$. For example, if $L$ is a two component link with linking number $\mu_{12}(L)=1$, then all the higher-order Milnor invariants of $L$ are completely ill-defined. Nevertheless, Milnor invariants turn out to be a central tool in 3- and 4-dimensional topology, particularly because of their concordance invariance. 

For example, if the topological surgery sequence is exact in dimension~4 for free fundamental groups, then the {\em Whitehead double} of any link $L$ with trivial linking numbers is topologically slice. However, this last statement is currently only known to hold for links $L$ with $\mu_\cI(L)=0$ for any multi-index $\cI=\{i_1 i_2\dots i_{n+2}\}$ in which at most one index appears more than once (and at most twice) in $\cI$ \cite{FT}. 

In 1985, Tim Cochran discovered a beautiful method of lifting certain Milnor invariants to well-defined integers  \cite{Co}: Given a $2$-component link $L=(L_1,L_2)$ with $\mu_{12}(L)=0$, he  first defined its {\em derived link} $D(L)$ by forming a knot as the intersection  
of Seifert surfaces for the components (each in the complement of the other component), and then taking this knot in place of $L_2$ to yield the new $2$-component link $D(L)$. 

Tim then defined  $\beta^1(L)\in\Z$ as the {\em Sato-Levine invariant} of $L$ \cite{Sa}, which is the twisting of the intersection knot, inherited from either Seifert surface. What are today known as {\em higher-order Cochran invariants} were defined recursively via the formula $\beta^i(L)=\beta^{i-1}(D(L))\in\Z$. Amazingly, these integers are well-defined for $i\geq 1$, and Tim showed \cite[Thm.6.10]{Co1} that they are lifts of the following Milnor invariants:
 \[
  \beta^i(L) \equiv \mu_{1^{2i}2^2}(L):=\mu_{1\dots_{2i}\dots 122}(L) \mod \gcd\{\mu_{1^{2k}2^2}(L), k<i\}
\]
Only $\beta^1(L)=\mu_{1122}(L)$ was known to be a well-defined integer for linking number zero links.

In this note, we re-interpret Cochran's invariants $\beta^i(L)$ in terms of intersection invariants of certain 2-complexes in the 4--ball $B^4$, with boundary $L\subset S^3$. These 2-complexes are special kinds of {\em twisted Whitney towers} \cite{CST,CST2} and we propose to call them {\em Cochran towers}. 

The notions necessary to understand our theorem below will be given in the remainder of the introduction but first we would like to state the main result: 

\begin{theorem} \label{thm:main}
For any $k\geq 1$, a link $L=(L_1,L_2)$ in $S^3$ bounds a Cochran tower of order $2k$ in $B^4$ if and only if $L$ has trivial linking number and the Arf invariant of $L_1$ vanishes. Moreover, given a Cochran tower $C$ of order $2k$ with boundary $L$ then for any $i\leq k$, the Cochran invariants can be computed as follows:
\[
\beta^i(L) = \sum \omega(W_J),
\]
where  the sum is over all Whitney disks $W_J$ in $C$ with $J^\iinfty \cong t^\iinfty_i$ and the \emph{twisting} 
$\omega(W_J)\in\Z$ is the relative Euler number of $W_J$. 
\end{theorem}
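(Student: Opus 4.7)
The plan is to prove the theorem by induction on $k$, leveraging Cochran's recursive definition $\beta^i(L) = \beta^{i-1}(D(L))$ together with a geometric restriction operation that passes from a Cochran tower on $L$ to a Cochran tower of one order lower on the derived link $D(L)$.

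For the characterization of which links bound a Cochran tower of order $2k$, the forward direction should follow from the general obstruction theory of twisted Whitney towers in $B^4$. Any Cochran tower contains an order $0$ Whitney tower, and the pairing of order $0$ intersections forces $\mu_{12}(L) = 0$. The vanishing of $\Arf(L_1)$ enters at the passage from order $0$ to order $2$: pairing the self-intersections of an immersed disk bounded by $L_1$ by \emph{framed} Whitney disks is possible exactly when the Arf invariant vanishes, which is the standard knot-Arf story in dimension four. For the converse, given both vanishing conditions, I would inductively construct the tower starting from Seifert surfaces $\Sigma_1, \Sigma_2 \subset S^3$ for the components, pushed into $B^4$. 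The Arf hypothesis yields the order $2$ framed Whitney disks, and the derived link $D(L)$, whose new component is essentially $\Sigma_1 \cap \Sigma_2$, bounds a Cochran tower of order $2(k-1)$ by induction, which can be spliced into the ambient tower.

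For the twisted intersection formula, the base case $i = 1$ identifies $\sum \omega(W_J)$ over trees $J^\iinfty \cong t_1^\iinfty$ with the Sato-Levine invariant $\beta^1(L)$: the relevant twisted Whitney disks are essentially regular neighborhoods of the intersection circle of the Seifert surfaces, and the sum of their relative Euler numbers recovers the classical Sato-Levine framing of that circle. For the inductive step, I would show that a Cochran tower $C$ of order $2k$ on $L$ canonically determines a Cochran tower $C'$ of order $2(k-1)$ on $D(L)$ by peeling off the outer order $2$ Seifert-surface stratum of $C$ and reinterpreting the remaining inner portion as a tower whose boundary is $D(L)$. Under this restriction, the trees $t_i^\iinfty$ appearing in $C$ for $i \geq 2$ should be in twisting-preserving bijection with the trees $t_{i-1}^\iinfty$ in $C'$. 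Induction then yields $\sum_{J^\iinfty \cong t_i^\iinfty} \omega(W_J) = \beta^{i-1}(D(L)) = \beta^i(L)$ by Cochran's recursion.

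The main obstacle I anticipate is making the restriction $C \mapsto C'$ both precise and natural at the level of tree labels, since this is the geometric heart of the "new phenomenon" highlighted in the introduction: a single order $2k$ tower must simultaneously carry data extractable at $k$ distinct orders. Checking that relative Euler numbers are preserved by this operation, and that the combinatorics of the $t_i^\iinfty$ trees contracts correctly to $t_{i-1}^\iinfty$ trees, will likely require careful framing and tree-counting computations in the spirit of \cite{CST,CST2}. By comparison, the Arf-invariant obstruction and the Seifert-surface construction should be relatively standard once the twisted Whitney tower machinery is in place.
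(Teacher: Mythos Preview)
Your approach is genuinely different from the paper's, and while the inductive scheme via the derived link is appealing, there is a real gap.

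The central issue is that your restriction operation $C \mapsto C'$ and your base case both presuppose that the Cochran tower $C$ is built in a specific way, namely from Seifert surfaces pushed into $B^4$. You say the relevant twisted Whitney disks for $i=1$ ``are essentially regular neighborhoods of the intersection circle of the Seifert surfaces,'' and that $C'$ is obtained by ``peeling off the outer order $2$ Seifert-surface stratum.'' But the theorem must apply to \emph{any} Cochran tower $C$ of order $2k$ bounding $L$; there is no reason an arbitrary such tower decomposes into a Seifert layer plus a tower on $D(L)$. Moreover, even if such a restriction existed, you would have to show that \emph{every} non-$\beta$-bad tree in $t(C)$ other than the $t_i^\iinfty$ restricts to something that does not contribute to $\beta^{i-1}$ on $D(L)$, not merely that the $t_i^\iinfty$ contract to $t_{i-1}^\iinfty$. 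This is the bulk of the work, and your outline does not address it.

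The paper avoids this problem by passing through claspers rather than inducting on Whitney towers directly. Via the decomposition theorem (Theorem~\ref{thm:clasper-concordance}), any Cochran tower on $L$ yields a sequence of concordances and simple (twisted) clasper surgeries from the unlink to $L$ whose tree-types match $t(C)$. Since the $\beta^i$ are concordance invariants, it suffices to compute the effect of each clasper surgery on $\beta^i$. The paper does this tree-type by tree-type: a key lemma (Proposition~\ref{prop:fintype}) shows $\beta^i$ is type~$1$ with respect to $L_2$, which kills surgeries on trees with many $2$-labels; separate arguments handle twisted trees with one $2$-label not of the form $t_k^\iinfty$ and framed trees with two $2$-labels not of the form $t_n$; and a direct clasper computation (Proposition~\ref{prop:induct}, which does use $D(L)$ at the clasper level) shows a $t_k^\iinfty$-surgery changes $\beta^k$ by $\omega(\Gamma)$ and fixes the other $\beta^i$. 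For the existence direction, the paper does not build a tower from Seifert surfaces at all; it starts from an arbitrary order~$1$ twisted tower and uses geometric IHX and boundary/interior twist moves from \cite{CST} to exchange each $\beta$-bad tree for higher-order trees (the Arf hypothesis ensures the $\tree{1}{1}{1}$ trees cancel in pairs).

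Your Seifert-surface construction might give an alternative proof of existence, but for the computation you would need either a well-defined restriction operation on \emph{arbitrary} Cochran towers or an independent argument that the twisting sum is invariant of the choice of tower before invoking your induction.
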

Here $J^\iinfty$ is a trivalent tree associated to the Whitney disk $W_J$ in $C$ and the relevant tree $t^\iinfty_i$ is shown on the right  hand side of Figure~\ref{fig:t-tree}.

\begin{remark} If the ($\Z/2$-valued) Arf invariant $\Arf(L_1)\neq 0$, one can change $L$ by
 tying a small trefoil knot into $L_1$. This does not alter $\beta^1$, nor the derived link $D(L)$ (and hence all $\beta^i(L)$ are unchanged) but allows one to build a Cochran tower on this new link to compute all $\beta^i(L)$.
\end{remark}

\begin{figure}[h]
\includegraphics[width=\linewidth]{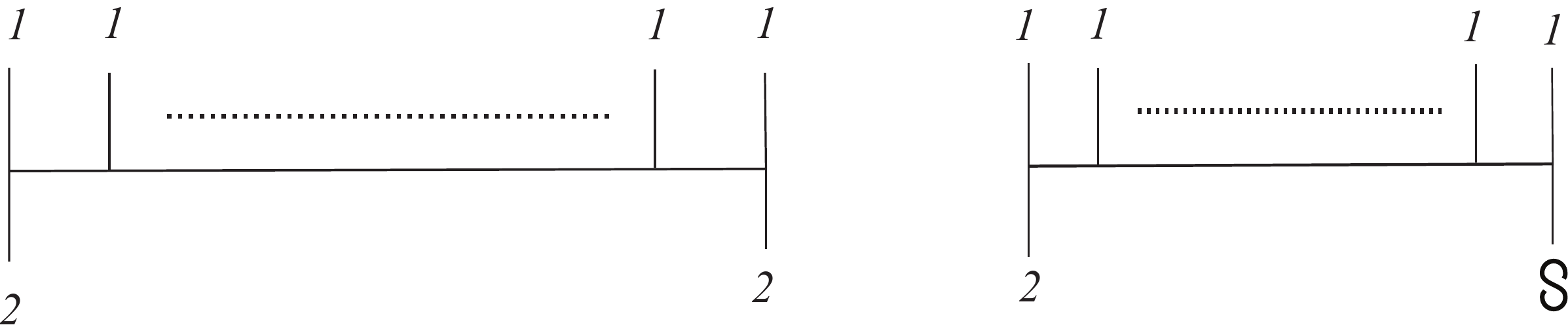}
\caption{The tree $t_i$ on the left has two vertices labeled $2$ and $i$ vertices labeled $1$. The tree $t_i^{\infty}$ on the right has $i$ vertices labeled $1$, one vertex labeled $2$, and one vertex labeled by the {\em twist} symbol.} \label{fig:t-tree}
\end{figure}

A geometric interpretation similar to Theorem~\ref{thm:main} was given in our earlier work \cite{CST2} for the {\em first non-vanishing} Milnor invariants $\mu_\cI(L)\in\Z$ of a given link $L$. This means that $\mu_{\cI'}(L)=0$ for all proper subsets $\cI'\subset \cI$.
These interpretations use our theory of {\em twisted Whitney towers} in the 4--ball with boundary $L$, which is surveyed in \cite{CST0} and detailed in \cite{CST,CST2}. We will sketch next those aspects which are relevant to the current discussion, and point the reader to the appropriate references for more information. 
The smooth category is assumed throughout, except when otherwise specified.

\subsection{Twisted Whitney towers and their intersection forests}\label{subsec:int-forest}
Roughly speaking, a twisted Whitney tower (with boundary a link $L\subset S^3$) is a finite 2-complex $\cW\subset B^4$ with $\cW\cap S^3 = L$, which is the union of a finite number of transverse disks with carefully chosen boundary conditions. More precisely, $\cW$ is formed by taking generic disks $W_j$ in the $4$--ball with $\partial W_j=L_j$ and then adding Whitney disks for pairs of intersection or self-intersection points among the $W_j$ (if possible). One continues to inductively add {\em higher-order} Whitney disks for pairs of (self)-intersection points among previously included disks, to arrive at a Whitney tower after finitely many steps. 

The main use of a Whitney tower $\cW$ comes from its {\em intersection forest} $t(\cW)$ \cite[Sec.2.5]{CST2} which is a disjoint
union of trivalent labeled trees, as we review next. As mentioned after Definition~\ref{def:t(W)}, $t(\cW)$ represents obstructions to successfully carrying out Whitney moves that would lead to slice disks for the link $L$. 

\begin{figure}[h]
\begin{center}
\includegraphics[width=.95\linewidth]{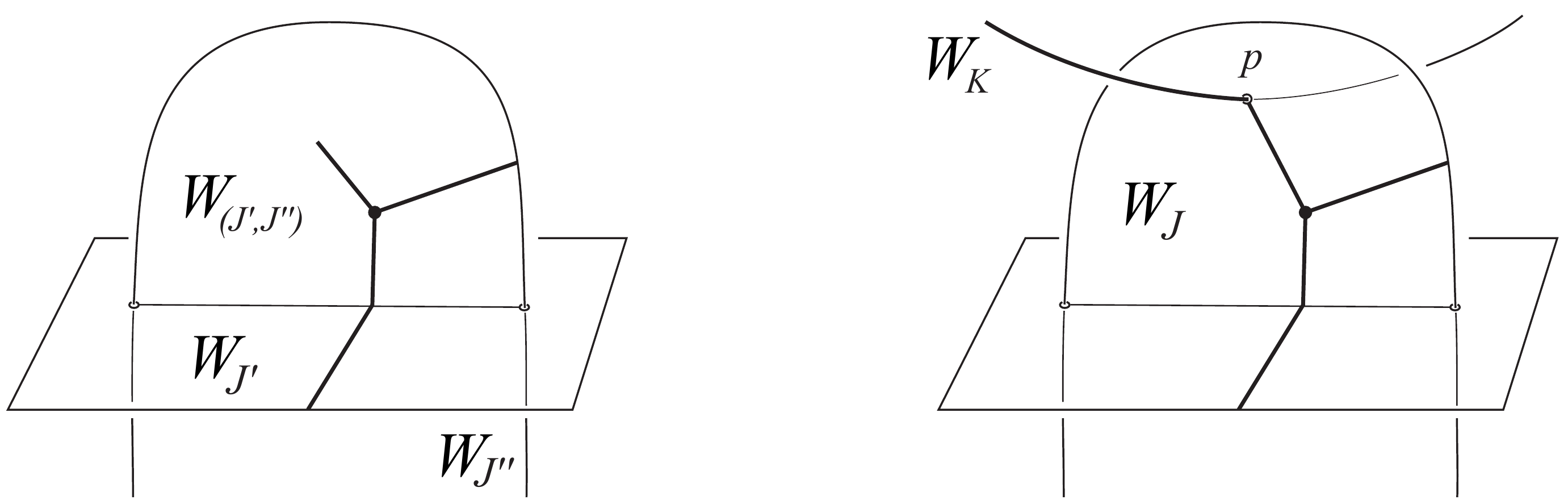}
\end{center}
\caption{Left: A Whitney disk $W_{(J',J'')}$ pairing intersections between $W_{J'}$ and $W_{J''}$ with (part of) its associated rooted tree $J=(J',J'')$.
Right: Part of the (unrooted) tree associated to an unpaired intersection $p\in W_J\cap W_K$.}
\label{fig:w-tree}
\end{figure}

To define $t(\cW)$, start by associating a rooted unitrivalent tree $J$ to each Whitney disk $W\subset\cW$ as follows: Take a univalent {\em root} vertex of $J$ sitting in the interior of $W$ along with an edge to the adjacent trivalent vertex, and take the other edges of $J$ to be sheet-changing paths bifurcating down through the lower-order Whitney disks until arriving at the boundary components $L_j$. This yields $j$-labeled univalent vertices in $J$ (the root is the only unlabeled univalent vertex). 

The notation $W_J$ indicates that $J$ is the rooted tree associated to the Whitney disk $W$, and we identify rooted labeled trees with non-associative bracketings of the index set formed by the link-components. So $W_{(J',J'')}$ pairs intersections between
$W_{J'}$ and $W_{J''}$, and the rooted tree $(J',J'')$ is formed by identifying the roots of $J'$ and $J''$ to a single vertex and sprouting the rooted edge of $(J',J'')$ from this vertex (Figure~\ref{fig:w-tree}, left). The {\em order} of $W_J$ is defined to be the order of $J$, which is the number of trivialent vertices in $J$. In particular, for a singleton index $j$, the order of $W_j$ is zero and its rooted tree is the  edge $\,\dash\dash\dash \,j$. 

\begin{figure}[h]
\begin{center}\includegraphics[width=.95\linewidth]{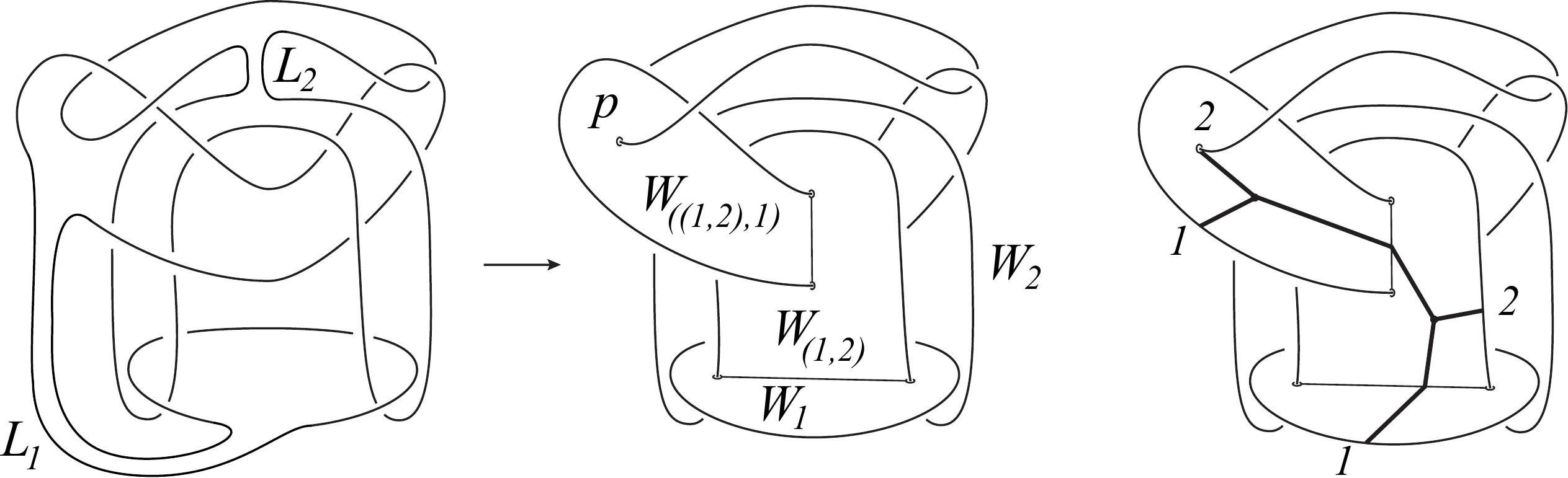}
\end{center}
\caption{Left: An internal band sum $L=(L_1,L_2)\subset S^3$ of a Bing-double of the Hopf link.  Center: Moving into $B^4$, the bands have `dissolved' into $1$-handles, leaving two $0$-handles for each disk $W_j$ bounding $L_j$, with one $0$-handle for $W_1$ visible as a horizontal disk.
The Whitney disk $W_{(1,2)}$ pairs $W_1\cap W_2$, and an there is a single unpaired intersection $p$ between $W_2$ and the Whitney disk $W_{((1,2),1)}$ pairing $W_{(1,2)}\cap W_1$. Right: The tree associated to $p$ is $t_2= \,^1_2>\!\!\!-\!\!\!\!-\!\!\!\!\!-\!\!\!<^{\,1}_{\,2} $ from Figure~\ref{fig:t-tree}. 
}
\label{fig:bing-hopf-example-with-tower-banded}
\end{figure}

Now each tree in $t(\cW)$ corresponds to one of two kinds of {\em problems} in $\cW$: 
A transverse intersection $p\in W_J\pitchfork W_K$ is a problem if $p$ is not paired by a higher-order Whitney disk. 
To such a $p$ is associated the labeled tree gotten by identifying the roots of $J$ and $K$ to a 
single (interior) point, 
(Figure~\ref{fig:w-tree}, right; and Figure~\ref{fig:bing-hopf-example-with-tower-banded}, right). 
The other kind of problem that can occur is that a Whitney disk $W_J$ in $\cW$ may be {\em twisted}, i.e. have a non-zero relative Euler number $\omega(W_J)\in\Z$ with respect to the standard framing of its boundary (e.g.~\cite[Sec.2.2]{CST}). This integer $\omega(W_J)$ is called the \emph{twisting} of $W_J$, and if $\omega(W_J)\neq 0$, then a Whitney move guided by $W_J$ will create new self-intersections, even if $W_J$ happens to be embedded.
This second problem contributes to $t(\cW)$ a tree $J^\iinfty$ defined by labeling the root vertex of $J$ with the symbol $\iinfty$ standing for ``twist''. 

A Whitney disk with twisting equal to zero is said to be \emph{framed}.
Since it can always be arranged (by \emph{splitting} $\cW$ if needed \cite[Sec.2.6]{CST2}) that 
unpaired intersections $p$ occur between framed disks, we frequently refer to the tree associated to such a $p$ as a \emph{framed} tree, to differentiate from the trees $J^\iinfty$ associated to twisted Whitney disks, which we usually call \emph{twisted trees}, or $\iinfty$-trees.

\begin{figure}[h]
\begin{center}
\includegraphics[width=.8\linewidth]{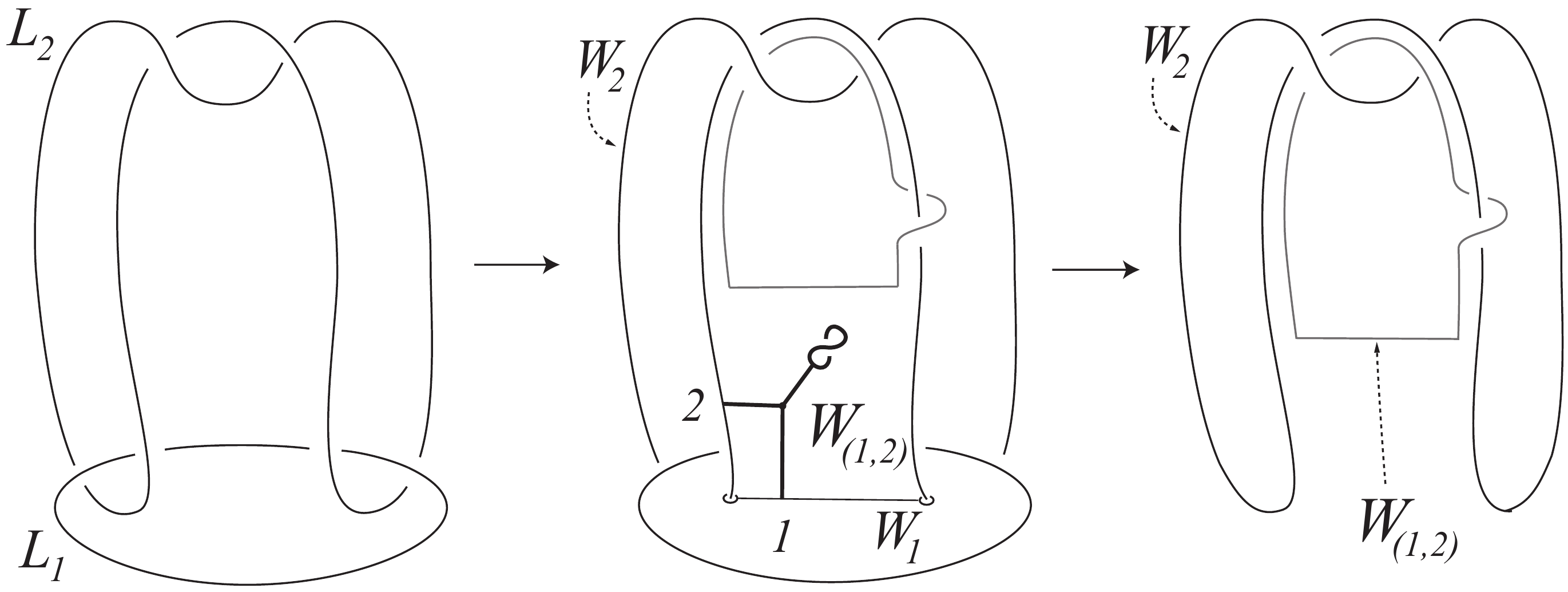}
\end{center}
\caption{Left: The Whitehead link $L=(L_1,L_2)\subset S^3$. Center: Moving into $B^4$, (most of) an embedded twisted Whitney disk $W_{(1,2)}$ pairing $W_1\cap W_2$, and the associated twisted tree. Right: Continuing into $B^4$, the rest of $W_{(1,2)}$ and $W_2$ are described by capping off this unlink with two embedded disks (not shown). The twisting $\omega(W_{(1,2)})=+1$ is evident in the right-handed twist 
(see e.g. [5, sec. 6]).
\label{fig:Whitehead-twisted-tree}}
\end{figure}

\begin{definition}\label{def:t(W)}
For a twisted Whitney tower $\cW$, define the {\em intersection forest} $t(\cW)$ as the disjoint union of (isomorphism classes of) such trivalent, labeled trees, one for each problem in $\cW$. 
\end{definition}

Note that if $\cW$ has no problems, i.e.\ $t(\cW)$ is empty, then one can do a sequence of (embedded, framed) Whitney moves to produce slice disks for the link $L$ on the boundary.

\begin{remark}\label{rem:t(W)-without-coefficients}
Comparing this definition with the definition of $t(\cW)$ given in \cite[Sec.2.5]{CST2}, the reader will notice that there the framed trees carry a vertex-orientation and a coefficient $\pm 1$, coming from orientations induced by the original link components (involving \emph{antisymmetry relations}), and the twisted trees carry the integer coefficients coming from the twisting of the corresponding Whitney disks (and independent of disk orientations). We do not need this extra data to define Cochran towers, although it is used in the obstruction theory discussed below.
\end{remark}

\begin{remark}\label{rem:not-using-twisted-terminology}
As per Definition~\ref{def:w-tower-order} below, twisted Whitney disks are allowed in \emph{twisted Whitney towers} (e.g.~\cite{CST,CST2}), while earlier papers defined \emph{Whitney towers} with the requirement that Whitney disks be \emph{framed}. For 
conversational ease we may sometimes let ``Whitney tower'' refer to either 
twisted or framed Whitney towers in a general discussion that applies to both settings, hopefully when no confusion will result.
\end{remark}

\subsection{Obstruction theory for twisted Whitney towers}\label{subsec:order-raising-intro}
 
\begin{definition}\label{def:w-tower-order}
Let $\cW$ be a twisted Whitney tower.
\begin{enumerate}
\item\label{def-item:tree-order}
The \emph{order} of any tree in $t(\cW)$ is the number of trivalent vertices. (This applies to both framed and twisted trees.)
\item\label{def-item:twisted-w-tower-order}
If all framed trees in $t(\cW)$ are of order $\geq n$ and all twisted trees in $t(\cW)$ are of order $\geq \frac{n}{2}$ then $\cW$ is a \emph{twisted Whitney tower of order $n$}.
\end{enumerate}

\end{definition}
This notion of order for twisted Whitney towers arose in our work on Milnor invariants \cite{CST2}, where we discovered that for any order $n$ twisted $\cW$ bounded by $L$ the order $n$ framed trees and order $\frac{n}{2}$ twisted trees (for $n$ even) in $t(\cW)$ contribute to $\mu_{i_1 i_2\dots i_{n+2}}(L)$ (with the twisted trees corresponding to certain multi-indices with order 2 symmetry).

More precisely, we showed in \cite{CST2} how the first non-vanishing $\mu$-invariants can be computed from $t(\cW)$ modulo certain relations, all of which can be realized via geometric maneuvers preserving the order of $\cW$ (without changing its boundary $L$).
Most prominently, the \emph{geometric IHX-relations}, or 4-dimensional Jacobi-identities, can be used to change $t(\cW)$ by replacing a tree containing an $I$-shaped subtree with two trees of the same order that only differ locally by $H$- and $X$-shaped subtrees, plus a number of trees of higher order \cite{CST}. 
For instance, at the cost of creating higher-order trees, the geometric IHX-relations can be used to modify an order $n$ twisted Whitney tower so that all framed trees in $t(\cW)$ with two $2$-labels and $n$ $1$-labels are isomorphic to $t_n$ in Figure~\ref{fig:t-tree}, and all twisted trees with one $2$-label and $\frac{n}{2}$ $1$-labels are isomorphic to $t^\iinfty_{\frac{n}{2}}$ if $n$ is even.

It follows from \cite{CST2} that non-trivial order $n$ Milnor invariants of $L$ are obstructions to $L$ bounding twisted Whitney towers of order greater than $n$, and it is not clear how order $k$ Milnor invariants for $k>n$ might be related to intersection forests of order $n$ Whitney towers. 
Since the specific Milnor invariants we are focusing on in this paper occur in multiple orders and can be lifted to the well-defined Cochran invariants $\beta^i(L)$, we arrive at the question of whether we can compute these in terms of twisted Whitney towers bounded by $L$. 
The answer, given by Theorem~\ref{thm:main}, is that indeed we can, and we simply add the twistings of certain Whitney disks. However, there is a catch:
For a Cochran tower $\cW$, the types of trees in $t(\cW)$ must be restricted in new ways!

\subsection{Cochran towers}\label{subsec:cochran-towers}
To define Cochran towers, we start with the following definitions for Whitney towers bounded by 2-component links, so all trees are labeled by $1$, $2$ or $\iinfty$.

\begin{definition}\label{def:bad} 
A trivalent labeled tree is called {\em $\beta$-bad} if at most one univalent vertex is not labeled by 1, or if it isomorphic to $t_i$ in Figure~\ref{fig:t-tree} for some $i$.   
\end{definition}

\begin{definition}\label{def:Cochran tower}
A twisted Whitney tower $\cW$  is a \emph{Cochran tower of order $n$} if all framed $\beta$-bad trees in $t(\cW)$ are of order $> n$ and all twisted $\beta$-bad trees in $t(\cW)$ are of order $> \frac{n}{2}$. Consequently, a twisted Whitney tower $\cW$  is a \emph{Cochran tower of infinite order} if $t(\cW)$  does not contain any $\beta$-bad trees at all.
\end{definition}

We observe that:
\begin{itemize}
\item  The framed $\beta$-bad trees are the framed trees having no $2$-labels, the framed trees having a single $2$-label, and the $t_i$.

\item  The twisted $\beta$-bad trees are the twisted trees having no $2$-labels.
\end{itemize}

Illustrations are given in Examples \ref{ex:infmany}, \ref{ex:no-2-labels-are-bad} 
and \ref{ex:t2-bad} showing how the presence of lower-order $\beta$-bad trees in a twisted Whitney tower can create indeterminacies in the computation of higher-order $\beta$-invariants.

The order zero trees $1\,\dash\dash\dash \,2$ in $t(\cW)$ give the linking number of $L$, while $\tree{1}{1}{1}$ and $\tree{1}{1}{\iinfty}$ are responsible for $\Arf(L_1)$. This explains the easy direction in the first sentence of Theorem~\ref{thm:main}. The hard direction is proven by applying all the tricks that we learned over the years, on how to raise the order of a Whitney tower. In particular, those trees that vanish modulo IHX can be exchanged for intersections of arbitrarily high order without changing the $\beta^i$.

\begin{remark}\label{rem:multiple-order}
Notice that, via Theorem~\ref{thm:main}, a Cochran tower of order $2k$ allows the computation of $\beta^i(L)$ for all $i\leq k$, i.e.\ simultaneous extraction of invariants from a Whitney tower at multiple orders, in contrast with the order $n$ Milnor invariants which require $L$ to bound an order $n$ twisted Whitney tower. To our knowledge this is the first example of this kind of computation. Moreover, in an infinite order Cochran tower, the $\beta^i$ can be computed in all orders, as in the next Example~\ref{ex:Whitehead}.
\end{remark}

\begin{example}\label{ex:Whitehead} (The Whitehead link) 
Figure~\ref{fig:Whitehead-twisted-tree} describes a twisted Whitney tower $\cW$ bounded by a (positive) Whitehead Link $L$, with $t(\cW)=\tree{1}{2}{\iinfty}=t_1^\iinfty$.
Since $t(\cW)$ contains no $\beta$-bad trees, $\cW$ is a Cochran tower of infinite order, and it follows from Theorem~\ref{thm:main} that $\beta^1(L)=1$, and  $\beta^i(L)=0$ for all $i>1$.
In fact, if one creates Seifert surfaces by adding tubes to the obvious disks bounded by the components  of $L$ (in order to make the Seifert surfaces disjoint from the other component), then the derived link $D(L)$ is the same unlink as in the right-hand side of Figure~\ref{fig:Whitehead-twisted-tree}. 

\end{example}

\begin{example} \label{ex:infmany} (Infinitely many nontrivial Cochran invariants, and why the linear framed trees with a single $2$-label at one end  (Figure~\ref{fig:iterating}, lower right) are $\beta$-bad.) 
Let $L$ be the link where the first component is a trefoil knot considered as the boundary of a genus one Seifert surface with a $+1$ twist in each band. The second component is a meridian to one of the two bands. See Figure~\ref{fig:infmany}, and \cite[Ex.4.6]{Co}. The derived link $D(L)$ is actually the same link but with the second component linking the other band, and this is isotopic to the original link. So $D(L)=L.$ Similarly to the Whitehead link, the Seifert surfaces for $L$ intersect in a knot with twisting $+1$, so $\beta^i(L)=1$ for all $i$.

\begin{figure}[h]
\begin{center}
\includegraphics[width=\linewidth]{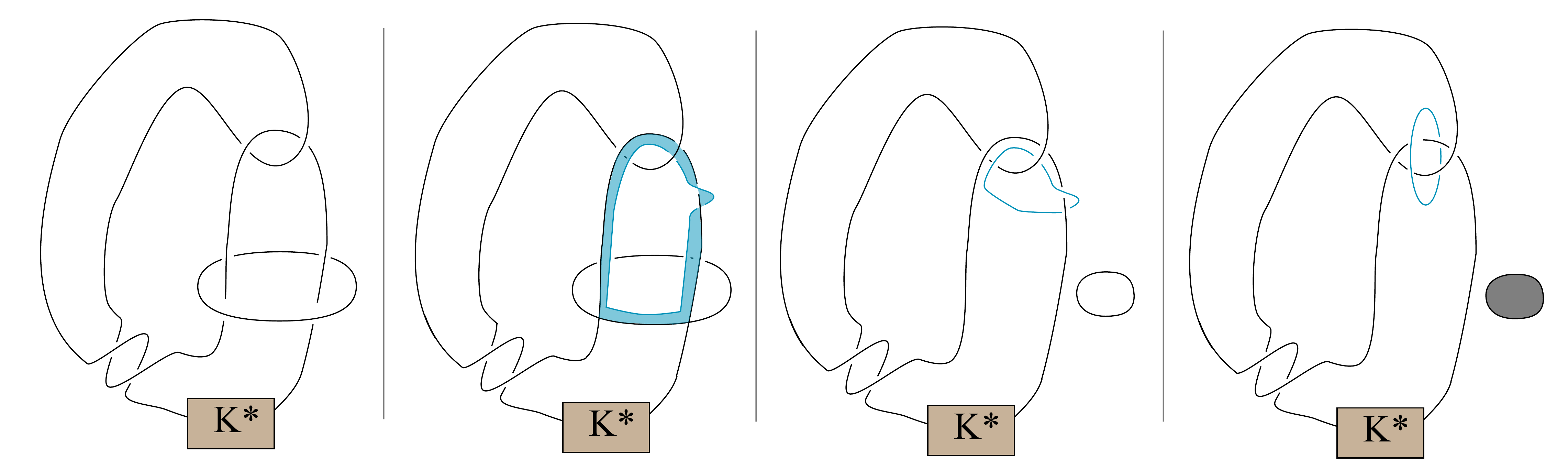}\\
\includegraphics[width=.5\linewidth]{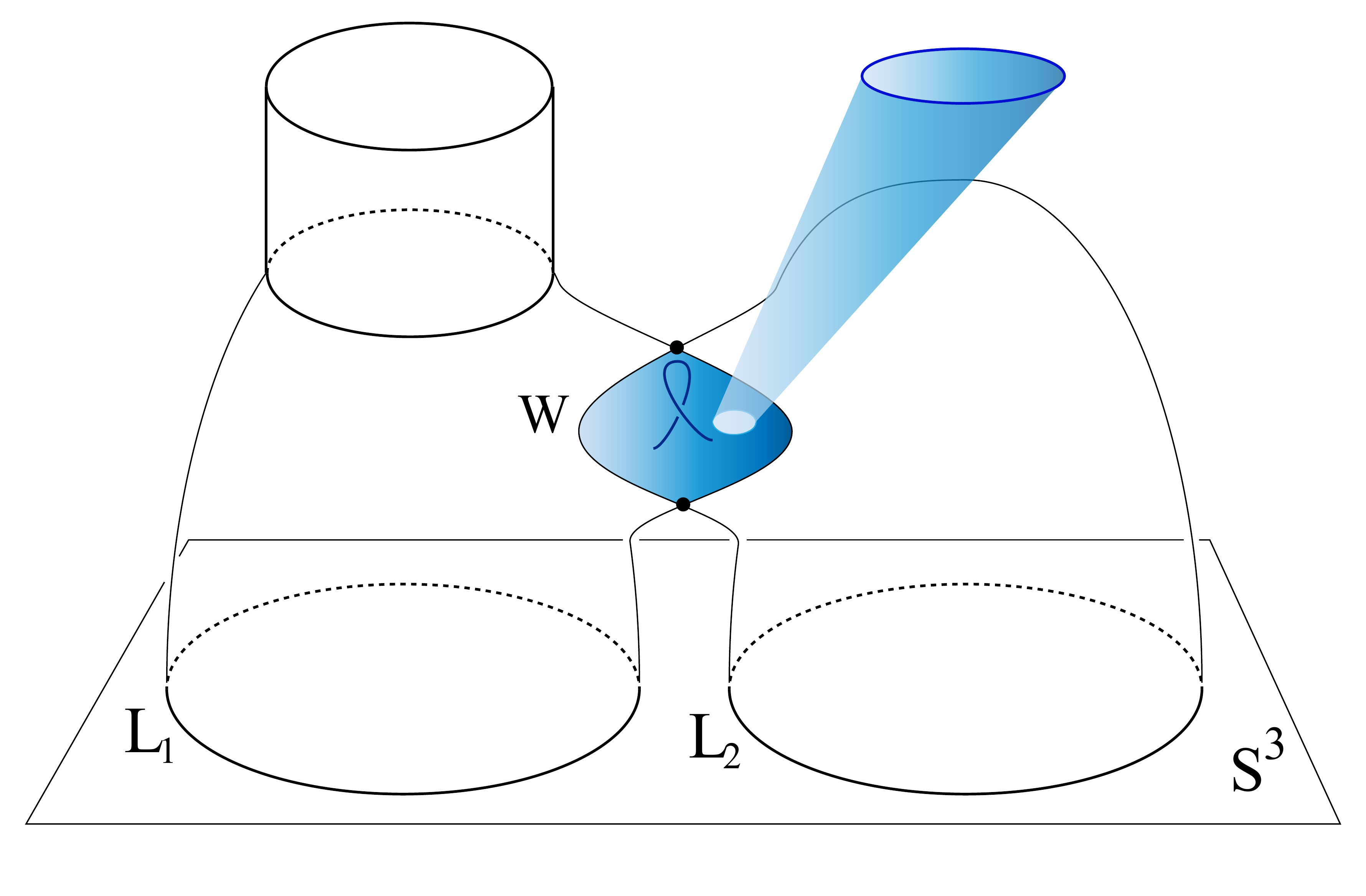}\\
\end{center}
\caption{One stage in the construction of a Cochran tower for the link in Example \ref{ex:infmany}. }\label{fig:infmany}
\end{figure}

Now we build a Cochran tower of arbitrarily high order for the link. First tie the mirror image of the trefoil (denoted $K^*$ in Figure~\ref{fig:infmany}) inside a small ball in the first component. This will not alter the Cochran invariants. 
Pulling the second component off the first component we get two intersections which are paired by a $+1$-twisted Whitney ``annulus".
The free boundary of this annulus is a meridian to the second band and hence the resulting link is isotopic to the original. This is schematically pictured at the bottom of Figure~\ref{fig:infmany}.

\begin{figure}[h]
\begin{center}
\includegraphics[width=\linewidth]{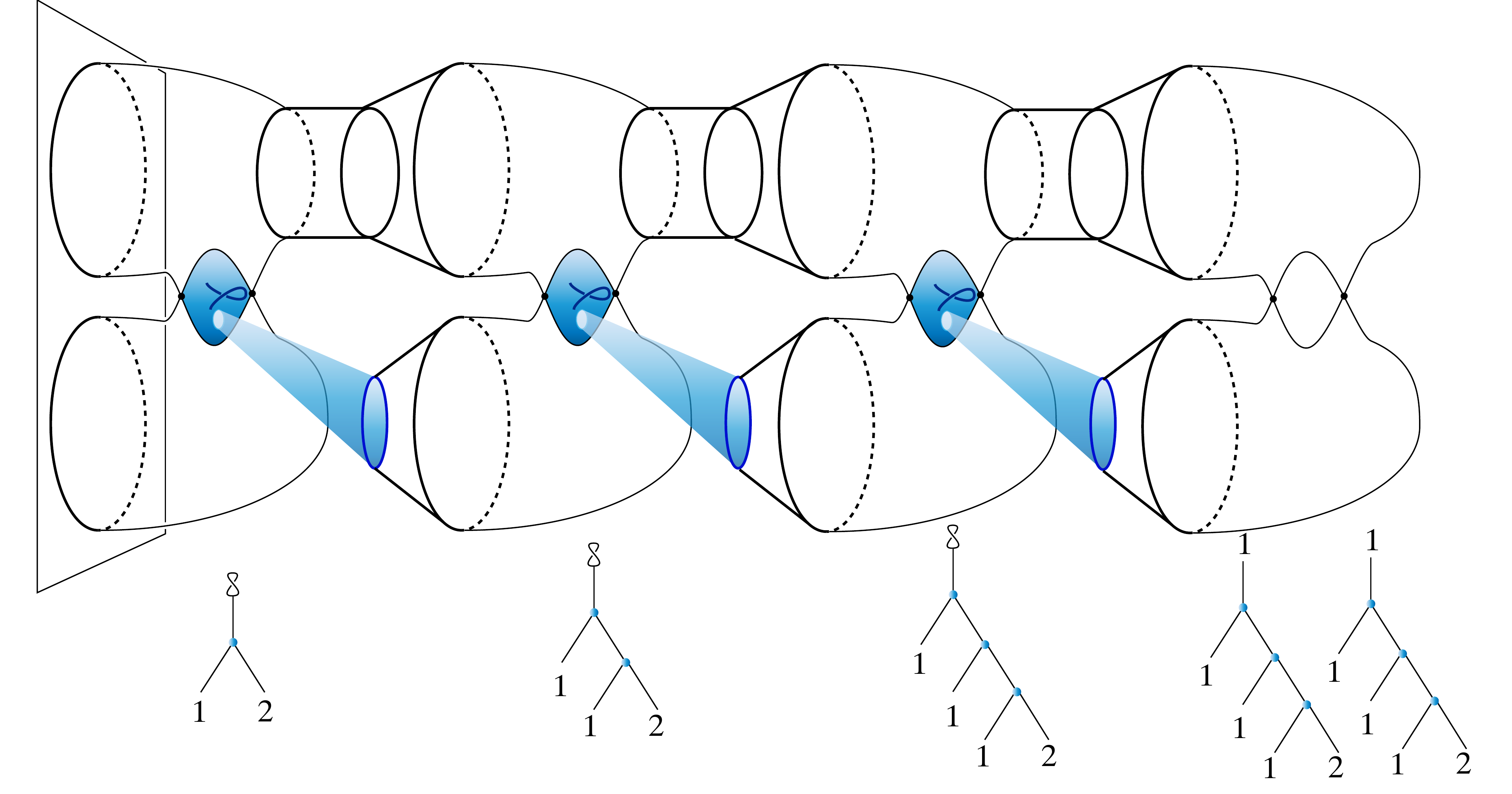}\\
\end{center}
\caption{Iterating the construction to get a Cochran tower of arbitrary order.}
\label{fig:iterating}
\end{figure}

One can then iterate this construction arbitrarily, concatenating this basic homotopy  repeatedly, and creating a single $+1$-twisted Whitney disk whose associated tree is $t^{\iinfty}_i$ at each $i$-th iteration. 
To get a twisted Whitney tower one eventually needs to \emph{not} pair the two intersection points, see Figure~\ref{fig:iterating} for the case of three iterations. The first component can then be capped off with a ribbon disk (because we added the inverse $K^*$). 
After $k$ iterations the resulting intersection forest $t(\cW)$ will contain single $\iinfty$-trees $t^{\iinfty}_i$ associated to the $+1$-twisted Whitney disks for each $i\leq k$, plus two order $k$ framed trees associated to the final two unpaired points at the top of the tower. These framed trees are $\beta$-bad, since they have a single $2$-label and $(k+1)$ $1$-labels, 
so we can only compute Cochran invariants up to a finite order using this tower. Iterating the construction $2k+1$ times would yield an order $2k$ Cochran tower $C$, with the $t^{\iinfty}_i$ trees in $C$ demonstrating that $\beta^i(L)=1$, for $i\leq k$. 

This also illustrates why the linear framed trees at the last step (Figure~\ref{fig:iterating}, lower right) must be considered $\beta$-bad.
\end{example}

\begin{example}\label{ex:no-2-labels-are-bad} (The twisted trees
$(1,1,\cdots,1)^\iinfty$ are $\beta$-bad.) We start with two illustrations of why $(1,1)^\iinfty=\tree{1}{1}{\iinfty}$ is $\beta$-bad. The disjoint union of a trefoil knot and an unknot bounds a twisted Whitney tower with the single intersection tree $\tree{1}{1}{\iinfty}$, and the Cochran invariants of this link are trivial.  
On the other hand, we claim that there is a link with a Whitney tower that has this same single intersection tree, but which has infinitely many nontrivial Cochran invariants. Thus the presence of a tree of this form contributes indeterminacies to the  tower which affect the Cochran invariants, and therefore $\tree{1}{1}{\iinfty}$ must be considered $\beta$-bad. The link $L$ is depicted in Figure~\ref{fig:bad}. 
\end{example}

\begin{figure}[h]
\begin{center}
\includegraphics[width=\linewidth]{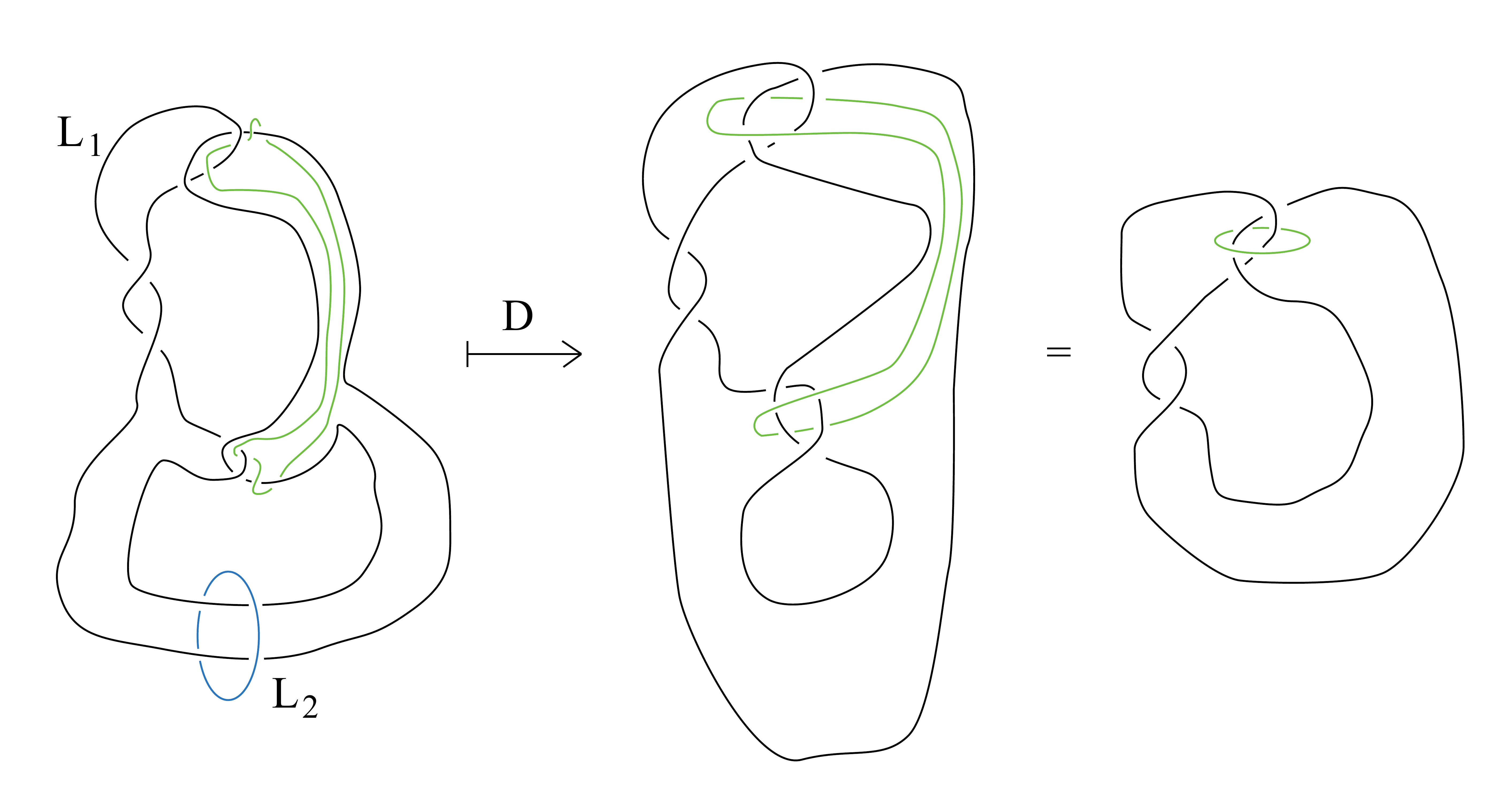}
\end{center}
\caption{A link $L=(L_1,L_2)$ with $\beta^i(L)\neq 0$ for all $i\geq 2$. $L$ bounds a twisted Whitney tower $\cW$ with $t(\cW)$ consisting of a single $\beta$-bad tree $\tree{1}{1}{\infty}$.} 
\label{fig:bad}
\end{figure}

The intersection knot (shown in green in Figure~\ref{fig:bad}) is untwisted, so $\beta^1(L)=0$; but $\beta^i(L)=1$ for all $i\geq 2$, since $D(L)$ is similar to the link with infinitely many Cochran invariants from Example~\ref{ex:infmany}: The only difference is that the second component of $D(L)$ is a meridian to the Seifert surface of a figure 8 knot instead of a trefoil, but the discussion of Example~\ref{ex:infmany} applies to this case, too.

A twisted Whitney tower $\cW$ bounded by $L$ with $t(\cW)$ consisting of a single $\beta$-bad tree $\tree{1}{1}{\infty}$ can be constructed as the trace of the null-homotopy of $L$ into $B^4$ which pulls apart the clasps of $L_1$, creating a pair of self-intersections in $W_1$ which admit an embedded $1$-twisted Whitney disk $W_{(1,1)}$. The interior of $W_{(1,1)}$, and the interior of $W_2$ bounded by $L_2$ are both free of intersections. 

Figure~\ref{fig:11infty-push-A} gives a general illustration how the presence of a twisted $W_{(1,1)}$ contributing $\tree{1}{1}{\iinfty}$ to $t(\cW)$ allows the creation of an arbitrary number of $t_i^\iinfty$-trees by manipulating $\cW$.

The same construction as in Figure~\ref{fig:11infty-push-A}, but with a twisted $W_{(1,1,\cdots,1)}$ in the place of $W_{(1,1)}$ (and with $J=2$), shows how the presence of 
$(1,1,\cdots,1)^\iinfty\in t(\cW)$ also allows the creation of an arbitrary number of $t_i^\iinfty$-trees.

\begin{figure}[h]
\begin{center}
\includegraphics[width=\linewidth]{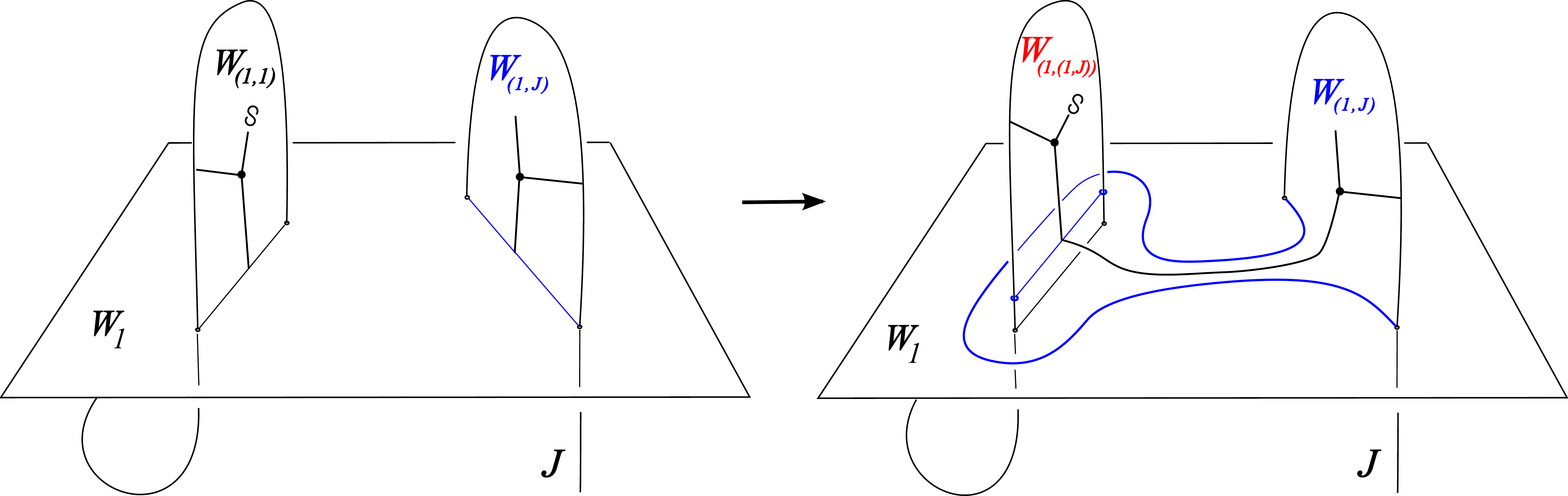}
\end{center}
\caption{Left: A twisted Whitney disk $W_{(1,1)}$, and a clean Whitney disk $W_{(1,J)}$ (which can be created for any $J$ by finger moves). Right: Pushing a collar of $W_{(1,J)}$ over $W_{(1,1)}$ gives rise to a new twisted Whitney disk $W_{(1,(1,J))}$ which could create twisted trees $t_i^\infty$ for appropriate $J$. For instance, $t_2^\infty$ could be created in the case $J=2$, and $t_3^\infty$ could be created in the case $J=(1,2)$.}
\label{fig:11infty-push-A}
\end{figure}


\section{Computing Cochran's $\beta^i$ via work of Kojima and Kirk}\label{sec:kirk}
After describing the relationship between the Cochran invariants and an invariant of Kojima, this section applies related computational techniques used by Kirk to give a family of examples which further illustrate how the presence of $\beta$-bad trees in $t(\cW)$ can lead to indeterminacies in the computation of the $\beta^i$ from $\cW$. This section is not used in the proofs of our main results, which are given in section~\ref{sec:proof}.

Cochran shows in \cite{Co} that his $\beta^i$ invariants are related to an invariant due to Kojima, called the $\eta$-invariant \cite{Kojima}. 
Consider the infinite cyclic cover of the complement of a knot $L_1$, denoted $Y$, and let $\Delta(t)$ be the symmetrized Alexander polynomial of $L_1$. 
For a link $L=(L_1,L_2)$ with linking number zero, let $z$ be a lift of $L_2$ to $Y$, $z_0$ a nearby lift of an untwisted parallel of $L_2$ and $t_*$ a generator of the covering transformation group. 

Then $\Delta(t_*)$ kills $z\in H_1(Y)$, so $\Delta(t_*)(z)=\partial d$ for some 2-chain $d$ in $Y$ and Kojima's $\eta$ function can be defined as 
\[
\eta(L)=\sum_{n=-\infty}^\infty\frac{1}{\Delta(t)}(z_0\cdot t_*^nd)t^n \in \mathbb Z[t,t^{-1}].
\]
 Cochran proves that under the change of variables $x=(1-t)(1-t^{-1})$, one has
 \[
 \eta(L)=\sum_{i=1}^\infty \beta^i(L)x^i  \in \Z[|x|].
 \]
 
An important consequence of this formula (with the Alexander polynomial in the denominator) is the existence of many links $L$ (with $\Delta(t) \neq 1$) for which $\beta^i(L)\neq 0$ for {\em infinitely many $i$}. 
 
In certain cases this can be related to Wall's self-intersection invariant via a procedure for constructing disjointly immersed surfaces from links used by Kirk to study link maps in the $4$--sphere \cite{Kirk}:
Consider the special case where  $L_1$ is unknotted, and think of the link $L$ as lying in the upper boundary of $S^3 \times I$. Because the linking number is zero, there is a homotopy of second component $L_2$, in the complement of $L_1$, that ends with the trivial link. Thinking of the track of this homotopy as an immersed annulus $A$ in the complement of $L_1 \times I$, 
one can consider the equivariant intersection number of $A$ with a parallel copy, $\lambda(A, A) \in \Z[t,t^{-1}]$. Both copies of $A$ are immersed annuli in the $4$--manifold $(S^3 \smallsetminus L_1)\times I \simeq S^1$ with fundamental group $\Z$, generated by $t$.

Note that setting $t=1$ gives zero because one gets the intersection number of an annulus (rel. boundary) in a manifold with no relative second homology. Moreover, the right hand side comes from a hermitian form and is hence invariant under the involution $a \mapsto \overline{a}$ on the group ring $\Z[t,t^{-1}]$ determined by $\overline{t} := t^{-1}$. 
It follows that $\lambda(A, A)$ is a polynomial in $x=(1-t)(1-t^{-1})$ and in fact Kirk showed in \cite{Kirk} that Kojima's $\eta$ function is given by
 \[
 \eta(L)=\lambda(A, A) \in \Z[t,t^{-1}]
 \]
This is the usual relation, in the equivariant setting, between the intersection form of a 4--manifold and its linking form on the boundary.

 To compute $\lambda(A, A)$, recall Wall's formula for the relation of the {\em self-intersection} invariant $\mu(A)$ with the intersection with a parallel copy. If the parallel copy has linking numbers zero on the boundaries then we get
 \[
 \lambda(A, A) = \mu(A) + \overline{\mu(A)}
 \] 
As a consequence, the easiest way to compute the invariants $\beta^i(L)$ for a link $L$ with linking number zero and unknotted component $L_1$ is as follows:  Do crossing changes on $L_2$ to separate it from $L_1$, and for each such crossing change record $\pm t^n$, where the sign is determined by the right hand rule for the crossing. Moreover, $n$ is the linking number with $L_1$ of the accessory circle of the crossing (which leaves on one sheet of $L_2$ and returns on the other).

Note that there are two choices for this accessory circle that differ by $L_2$ itself. Since it links $L_1$ trivially, this choice is irrelevant. However, the integer $n$ is only defined up to sign since we don't know in which direction we should run the accessory circle. This is the usual indeterminacy of Wall's self-intersection invariant $\mu$, and it disappears when computing the above average over the involution: One has to record $\pm (t^n + t^{-n})$ for each crossing change and sum over all necessary crossing changes.

This leads to a simple computation of
$$
\sum_{i=1}^\infty \beta^i(L)x^i  =  \lambda(A, A) = \mu(A) + \overline{\mu(A)}  \quad \text{ where } \quad x=(1-t)(1-t^{-1}).
$$
Note that in this case, only finitely many of the invariants $\beta^i(L)$ can be nontrivial! 

We also note that the exact same computation is valid if $L_1$ is allowed to be knotted but has trivial Alexander polynomial. By Freedman's Theorem \cite{FQ}, $L_1$ allows a topological concordance in $S^3 \times I$ to the unknot, with complementary fundamental group $\Z$. Therefore, $L_2$ bounds an immersed annulus $A$ in the complement of the concordance (leading to the trivial link) and we can again compute $\mu(A)$. 

The formulas above extend to this setting because $0$-surgery on $L_1$ is the boundary of the complement of (an open neighborhood of) the slice disk in $D^4$. Moreover, Kojima's function is the boundary value of the $\Z$-equivariant linking form associated to the intersection form on this $4$--manifold.

\begin{example}\label{ex:t2-bad} (The tree $t_2$ is $\beta$-bad.) This example uses clasper-surgery to illustrate in detail how the presence of the framed tree $t_2\in t(\cW)$ leads to indeterminacies in the computation of higher-order Cochran invariants in several orders at once. As described in Theorem~\ref{thm:clasper-concordance} (Section~\ref{sec:proof} and \cite{CST6}), the tree-types of clasper surgeries on an unlink correspond to trees in the intersection forest of a Whitney tower bounded by the resulting link. 

Consider the clasper (see Section~\ref{subsec:clasper-conventions}) of type $t_2$ pictured below on the left: \
\begin{center}
\includegraphics[width=.25\linewidth]{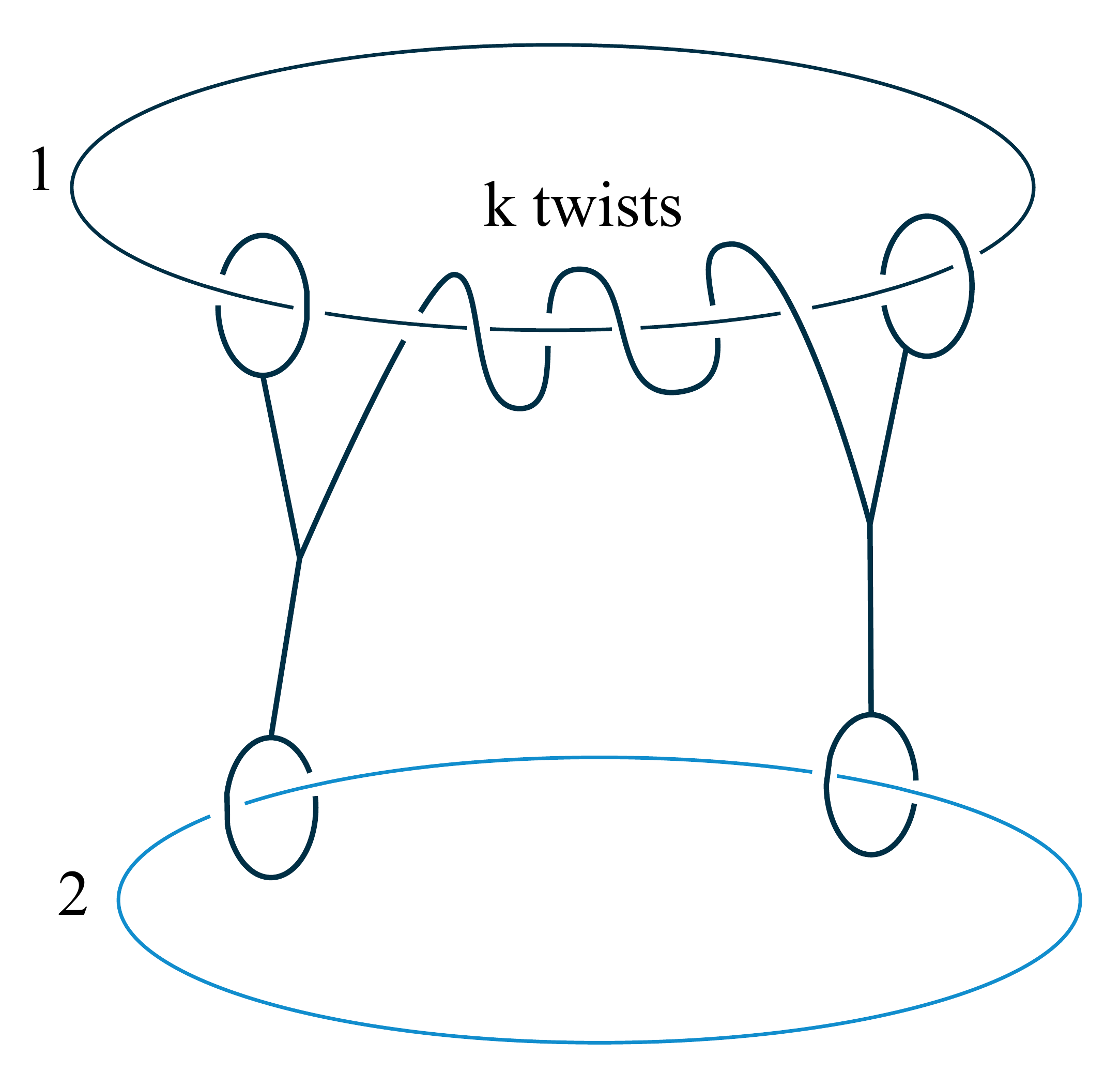}$\hfill$\includegraphics[width=.7\linewidth]{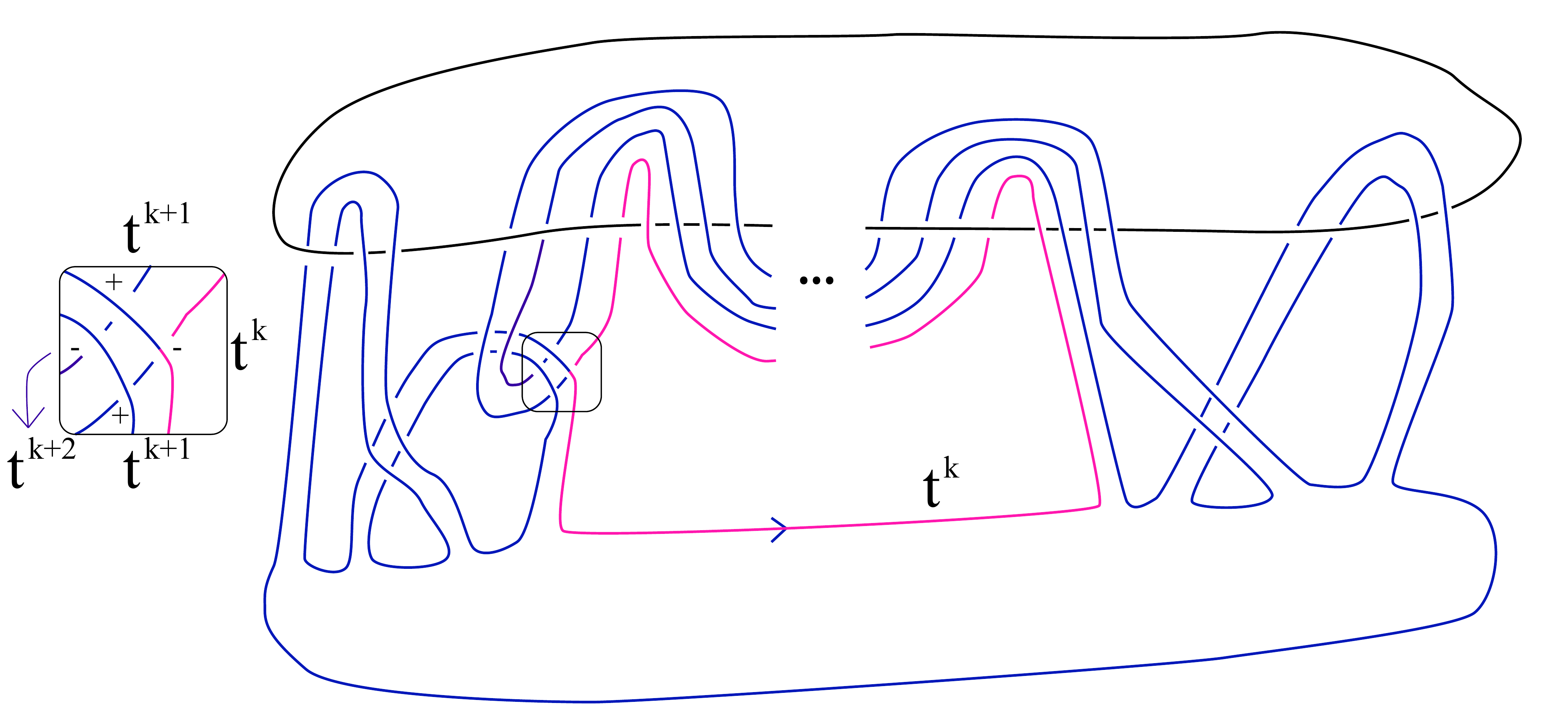}
\end{center}
Let $L^k$ be the link obtained from this surgery, pictured on the right. We calculate $\eta(L^k)$ via the Wall self-intersection invariant
using the procedure just outlined. There are four crossing changes of component $2$, pictured in the small box, which will turn $L^k$ into an unlink. The accessory arc for one of these crossings is colored pink, and one can see that it links with component one $k$ times, and so is assigned $-t^k$, the negative coefficient coming from the fact that the crossing is a negative crossing. One similarly calculates the accessory circles for the other three crossings getting $-t^{k+2}, t^{k+1}, t^{k+1}$. Thus
$\eta(L^k)=-(t^k+t^{-k})+2(t^{k+1}+t^{-k-1})-(t^{k+2}+t^{-k-2})$.  Converting this to a power series in $x=(1-t)(1-t^{-1})$ we have the following table of polynomials in $x$:
\begin{center}
\begin{tabular}{r|l}
$k$&$\sum_{i=1}^\infty \beta^i(L^k)x^i$\\
\hline
$-1$& $2x$\\
$0$& $2x-x^2$\\
$1$& $2 x - 4 x^2 + x^3$\\
$2$& $2 x - 9 x^2 + 6 x^3 - x^4$\\
$3$& $2 x - 16 x^2 + 20 x^3 - 8 x^4 + x^5$\\
$4$& $2 x - 25 x^2 + 50 x^3 - 35 x^4 + 10 x^5 - x^6$
\end{tabular}
\end{center}
The fact that $\beta^1(L^k)=2$ for all these examples comes from the fact that the link bounds a Whitney tower with the single $\beta$-bad tree $t_2$, and this contributes $2$ to the Sato-Levine invariant $\mu_{1122}$. On the other hand, the higher $\beta^i(L^k)$ are not constant on this class of links, so surgery on a $t_2$-clasper will not have a predictable effect on the Cochran invariants.

\end{example}

\section{Proof of Theorem~\ref{thm:main}}\label{sec:proof}

After fixing some terminology and notation, this section will prove Theorem~\ref{thm:main} by combining Habiro's clasper surgery techniques \cite{Hab00} with the Whitney tower obstruction theory \cite{CST} and the following decomposition theorem from \cite{CST6}:

\begin{theorem}\label{thm:clasper-concordance}
If a link $L$ bounds a (twisted) Whitney tower $\cW$ then there is a finite sequence of concordances and simple (twisted) clasper surgeries from the unlink to $L$ such that
the tree-types of the clasper surgeries correspond to the trees in $t(\cW)$, with the twistings on twisted claspers corresponding to the twistings on the twisted Whitney disks of $\cW$. $\hfill\square$
\end{theorem}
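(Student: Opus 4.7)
The plan is to prove the theorem by induction on the number of trees in the intersection forest $t(\cW)$, converting one problem of $\cW$ at a time into a simple clasper surgery via a local standard model, and absorbing all residual modifications into a concordance factor.

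First I would apply the splitting procedure of \cite[Sec.2.6]{CST2} to $\cW$ so that every Whitney disk contains at most one problem (either a single unpaired transverse intersection or a single unit of twisting) and so that the supporting Whitney disks of distinct problems are disjoint. Splitting preserves the boundary link $L$ and replaces each tree of $t(\cW)$ by isomorphic trees, so after splitting each tree $t\in t(\cW)$ indexes a pairwise-disjoint split sub-tower $\cW_t \subset \cW$ carrying one problem whose associated tree is $t$ (or $J^\iinfty$ in the twisted case).

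Next I would establish the local model. A regular neighborhood $B_t \subset B^4$ of a split sub-tower $\cW_t$ is a $4$-ball whose boundary meets the rest of $\cW$ in a trivial (unlinked) family of arcs and meets the upper $3$-sphere in the arcs of the boundary link contributed by $\cW_t$. By Habiro's clasper calculus \cite{Hab00}, the pair $(B_t,\cW_t)$ agrees with the standard trace of a simple tree-clasper surgery whose underlying tree is $t$, with any twisting on a Whitney disk of $\cW_t$ corresponding to a twist on the leaf of the clasper. This is the essential geometric identification between Whitney-tower data and clasper-surgery data.

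The inductive step proceeds by extracting a top-order neighborhood $B_t$ and replacing it with a $4$-ball realizing the trivial tangle on the same boundary arcs, producing a link $L'$ and a twisted Whitney tower $\cW'$ bounded by $L'$ with $t(\cW') = t(\cW)\setminus\{t\}$. The induction hypothesis yields a sequence of concordances and simple clasper surgeries from the unlink to $L'$ whose tree-types realize $t(\cW')$, and reinserting the local model $B_t$ then realizes $L$ from $L'$ by a single further clasper surgery of type $t$ (with the matching twist if $t=J^\iinfty$). The base case $t(\cW)=\emptyset$ is the standard fact that a problem-free (framed, untwisted) Whitney tower can be converted by iterated embedded framed Whitney moves into disjoint slice disks for $L$, which is precisely a concordance from the unlink to $L$.

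The main obstacle will be ensuring that extracting $B_t$ does not create spurious new trees in $t(\cW')$. When $t$ depends on lower-order Whitney disks that in $\cW$ are also used to support other problems, removing $B_t$ could in principle damage those supporting disks and introduce new unpaired intersections or twists. The splitting step is designed to disentangle such sharing, but one must additionally be careful to choose a \emph{top-order} tree to extract so that no higher-order Whitney disk is cut; together with the flexibility of replacing isotopy by concordance (which can absorb bounded-order local modifications), this is what makes the inductive accounting of trees work cleanly. This bookkeeping constitutes the main technical content of \cite{CST6}.
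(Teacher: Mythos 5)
The paper does not actually prove Theorem~\ref{thm:clasper-concordance}: it is stated with a $\square$ and the text immediately afterward defers the detailed argument to the reference \cite{CST6}, which is listed as ``in preparation.'' So there is no proof in this paper to compare your sketch against, and your own closing sentence (``this bookkeeping constitutes the main technical content of \cite{CST6}'') correctly identifies that you have restated the difficulty rather than resolved it.

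On the merits of the sketch itself, the high-level induction (split, localize, extract one tree at a time, close with a concordance on the problem-free base case) is a plausible strategy, but the two crucial steps are asserted rather than argued. First, the split sub-towers $\cW_t$ cannot in general be made pairwise disjoint: every tree $t$ reaches down to the order-zero disks $W_1,W_2$, so any two sub-towers share those sheets, and a ``regular neighborhood of $\cW_t$'' necessarily cuts through disks supporting other problems. You need a careful description of what $B_t$ is, how it meets the other sub-towers, and why replacing its contents by a trivial tangle leaves a Whitney tower with exactly $t(\cW)\setminus\{t\}$; this is precisely the ``main obstacle'' you flag, and the sketch does not discharge it. Second, the identification of $(B_t,\cW_t)$ with ``the standard trace of a simple tree-clasper surgery'' is the core geometric content of the theorem and is not a consequence of Habiro's clasper calculus, which is purely a statement about surfaces and moves in a $3$--manifold; the correspondence between an order-$n$ twisted Whitney disk in $B^4$ and a twisted tree clasper on $S^3$ is exactly what must be proven. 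Without a concrete local model (e.g.\ exhibiting the trace of the clasper surgery in a collar $S^3\times I$ as a split Whitney tower with one problem of the given tree type, with twisting translating to leaf framing), the argument is circular.
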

A more precise description of the relationship between clasper concordance and Whitney towers is given in \cite{CST6}. 

See section~\ref{subsec:clasper-conventions} below for clasper terminology and conventions, including tree-types and twisted claspers.

\subsection{Outline of proof of Theorem~\ref{thm:main}}\label{subsec:proof-outline}

The proof of the second statement of Theorem~\ref{thm:main}, that Cochran invariants up to order $k$ can be calculated from the intersection forest of a Cochran tower of order $2k$, will proceed as follows:
By Theorem~\ref{thm:clasper-concordance}, the Cochran tower implies the existence of a sequence of clasper surgeries and concordances from the unlink $U$ to the link $L$, where the tree-types and twistings of the claspers are the same as those in the intersection forest of the Cochran tower. Since the Cochran invariants are concordance invariants, we then observe how they change under clasper surgeries:
On the one hand, a
$t^{\iinfty}_i$-surgery will be shown to leave $\beta^j$ unchanged for $ j\neq i$, and to change $\beta^i$ by the twisting 
$\omega(\Gamma)$ of the twisted $t^{\iinfty}_i$-clasper $\Gamma$ in Corollary~\ref{cor:gamma-t}. On the other hand,
clasper surgeries corresponding to all other trees allowed in an order $2k$ Cochran tower will be shown to preserve the $\beta^i$ for all $i\leq k$ in Corollary~\ref{cor:three-twos} and Propositions \ref{prop:framed-zero-trees}, \ref{prop:twisted-zero-trees}, and \ref{prop:above-order}.  So $\beta^k(L)-\beta^k(U)$ will be the weighted count of $t^{\iinfty}_i$ trees as claimed.

The proof of the first statement of Theorem~\ref{thm:main}, that a link $L$ bounds a Cochran tower of order $2k$ in $B^4$ if and only if $L$ has vanishing linking number and the Arf invariant of $L_1$ vanishes, will follow from the fact that the $\beta$-bad trees in the intersection forest of a Whitney tower bounded by $L$ can all be eliminated by twisting constructions or geometric IHX-constructions, at the cost of only creating higher-order trees \cite{CST}. 
See section~\ref{subsec:proof-theorem-main}.

\subsection{Clasper conventions}\label{subsec:clasper-conventions}

We will be using Habiro's {\em clasper surgery} techniques, for details see \cite{CT1,Hab00}. We adopt the following terminology from \cite{CT1}, and essentially the same notion of twisted claspers from \cite{CST3}.
Although claspers are surfaces, we follow the customary identification of a clasper with its $1$-dimensional spine, which is a framed unitrivalent graph.  

All of our claspers will be \emph{tree claspers}.
A clasper $\Gamma\subset S^3$ is \emph{capped} if all the leaves of $\Gamma$ bound disjointly embedded disks (the \emph{caps}) into $S^3\setminus \Gamma$ (so the leaves of a capped clasper are unknotted). A cap for a clasper on a link is called a \emph{simple cap} if it is $0$-framed and intersects the link in a single point.
A \emph{simple tree clasper} is a capped tree clasper on a link such that each cap is simple.  A \emph{twisted tree clasper} is a capped tree clasper on a link such that all caps are simple except for one $k$-framed cap, for some integer $k\neq 0$, whose interior is disjoint from the link. If $\Gamma$ is a twisted tree clasper, this \emph{twisting number} $k$ of $\Gamma$ will be denoted $\omega(\Gamma)$.

A leaf is \emph{simple} if it bounds a simple cap and \emph{twisted} (\emph{$k$-twisted}) if it bounds a twisted cap.
If a clasper leaf bounds a $0$-framed cap which is disjoint from all link components, then the leaf and the cap are said to be \emph{clean}.  

By the \emph{interior} of a clasper, we will mean the complement of the leaves (and caps), which in our case is a unitrivalent tree.
For simple and twisted tree claspers this tree is labeled exactly like the intersection trees in $t(\cW)$: Univalent vertices are labeled by the components that the corresponding caps intersect, or by the $\iinfty$ symbol if that leaf is twisted. 


A clasper with tree type $T$ will be called a \emph{$T$-clasper}. In particular, if $T$ is a twisted tree, then a $T$-clasper is a twisted clasper. 
 

{\bf Surgery with a specified root:} For $\Gamma$ a $T$-clasper in the complement of a link $L$, we denote by $L_\Gamma$ the link obtained by surgery on $\Gamma$. This surgery is really a diffeomorphism of $S^3$ which can change the link's isotopy class. The diffeomorphism and concomitant modification of $L$ can be realized in many different ways. For example, if one designates a univalent vertex of $T$ as the root, one can realize $L_\Gamma$ as a modification of $L$ which takes place in a neighborhood of $\Gamma$ and the root cap; the other caps are not involved. More precisely, only strands of 
$L$ which intersect the root cap will be modified, and this modification will be supported near $\Gamma$.
A surgery on a $T$-clasper will be called a \emph{$T$-surgery}. 


{\bf The Zip construction:} Suppose a leaf of a clasper $\Gamma$ bounds a cap $\Delta$ which may intersect the link $L$. The \emph{zip construction} \cite{CT1,Hab00} cuts the cap $\Delta$ into two pieces, such that surgery on $\Gamma$ is equivalent to surgery on the union of two daughter claspers, both of the same tree type as $\Gamma$, each of which inherits one piece of $\Delta$ as a cap, while the other leaves are parallel copies of the originals. The two daughter claspers are embedded in a neighborhood of $\Gamma$; their edges are disjoint from any caps that each may have.


\subsection{The Cochran $\beta^i$-invariants}\label{subsec:Cochran-invts-definition}
We focus on the setting of classical $2$-component links modulo concordance, and refer the reader to \cite{Co,Co1} for details on the full generality of the Cochran $\beta^i$-invariants. 

\begin{definition}[\cite{Co}]\label{def:derived-link}
Let $L=(L_1,L_2)$ be a $2$-component link with zero linking number, and let $\Sigma_1$ and $\Sigma_2$ be Seifert surfaces for $L_1$ and $L_2$ such that $\Sigma_1\cap\Sigma_2=\chi$ is connected (and non-empty). 
Then the \emph{derived link} $D(L)$ is defined to be the 2-component link $(L_1,\chi)$ gotten by replacing $L_2$ with $\chi$.
The knot $\chi$ is called the \emph{characteristic curve} of $L$ (and $\Sigma_1$ and $\Sigma_2$). 
\end{definition}

It is easy to see that a derived link $D(L)$ also has zero linking number, giving rise to the following infinite sequence of invariants, which Cochran showed to be concordance invariants of $L$:
\begin{definition}[\emph{Cochran invariants} \cite{Co}]\label{def:beta}
The first Cochran invariant $\beta^1(L)\in\Z$ is defined to be  the classical Sato-Levine invariant of $L$. Inductively, $\beta^i(L)$ is defined to be $\beta^{i-1}(D(L))$ for $i>1$. 
\end{definition}
The classical Sato-Levine invariant \cite{Sa} is equal to the {\em twisting} $\omega(\chi)\in\Z$ of the framing of the characteristic curve $\chi=\Sigma_1\cap \Sigma_2$ induced by the normal framing of either surface $\Sigma_j$ relative to the $0$-framing of $\chi\subset S^3$.

\begin{remark}\label{rem:connect}
As shown by Cochran \cite{Co}, for any $L=(L_1,L_2)$ with trivial linking number, there exist Seifert surfaces $\Sigma_1$ and $\Sigma_2$ that intersect in a connected closed curve. 
In the sequel, it will often happen that modification of such a link will temporarily create new components of intersection between the modified Seifert surfaces, and we will use the following procedure for making the intersection connected in a well-controlled way: The modification of $L$ will lead to $\Sigma_1$ being extended by boundary-summing into some new genus one surfaces, and the corresponding new intersection curves with the second Seifert surface will be at most a single nonseparating circle on each of these new genus one subsurfaces.
Choose arcs in the new $\Sigma_1$ connecting each new intersection circle to the original characteristic curve $\chi=\Sigma_1\cap\Sigma_2$. Surgering the second Seifert surface along these arcs has the effect of band-summing the new intersection curves with $\chi$, making the intersection between the new Seifert surfaces connected. Since each of the new circles was nonseparating, these arcs can be chosen in such a way that the surgered surface is still orientable. 
 (For example, see Figures~\ref{fig:main-induction}, \ref{fig:twisted-zero-trees} and \ref{fig:1122tree}, where new genus is added to $\Sigma_1$ along with new intersection components supported in the new genus.)

%
\end{remark}

\subsection{Claspers and Cochran invariants}\label{subsec:claspers-Cochran-invts}
It turns out that the Cochran invariants are \emph{finite type} in a certain sense, ultimately deriving from the fact that the twisting $\omega(K)$ of a framed knot $K$ is a type $1$ framed knot invariant, a notion we now define.

Recall that a nowhere-vanishing normal vector field of a smooth knot $K\subset S^3$ is referred to as a {\em framing} of $K$. Identifying the image of the vector field with a knot parallel to $K$, the twisting $\omega(K)$ is the linking number of $K$ with its parallel copy. Here the notation $K$ is abused to describe the {\em framed knot}. 

\begin{definition}\cite{CDM}
A \emph{singular framed knot} (in $S^3$) is a framed knot, modified to allow finitely many double points or non-tangential zeros of the framing. Any framed knot invariant $v$ can be extended to singular framed knots, by the usual Vassiliev skein relation for double points and by
\begin{center}
\includegraphics[width=4cm]{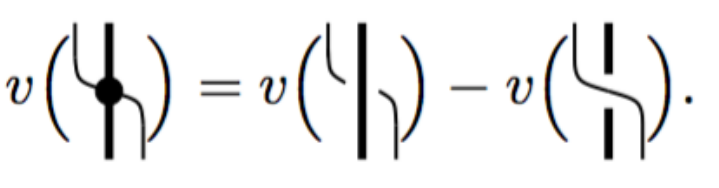}
\end{center}
for the non-tangential zeros. A  framed knot invariant is said to be \emph{type $k$} if it vanishes on singular framed knots with $k+1$ double points and non-tangential zeroes.
\end{definition}

In other words, for any collection $S$ of $k+1$ disjointly supported crossing changes and local framing changes, a type $k$ framed knot invariant $v$ will satisfy $$\sum_{S'\subset S} (-1)^{|S'|}v(K_{S'})=0$$ 
where $K_{S'}$ the result of changing $K$ by exactly the elements in $S'$, $|S'|$ is the cardinality of $S'$, and the sum is over all subsets of $S$. Crossing changes and local framing changes are both examples of homotopies of $K$ supported in balls (where we consider the local framing change to be a crossing change between $K$ and the parallel knot corresponding to the framing), and all homotopies and framing changes can be represented as a sequence of these two basic moves (together with isotopy.)
As observed by Goussarov \cite{goussarov}, an alternating sum over a collection $S$ of groups of crossing and framing changes (all disjointly supported) can be written as a linear combination over alternating sums of crossing and framing changes. Thus, we have that an invariant is of type $k$ if and only if it vanishes on alternating sums of $k+1$ disjointly supported homotopies.

It is not difficult to show that the twisting $\omega(K)$ is a framed knot invariant of type $1$. This has implications for the Cochran invariants since we ultimately compute them using the twisting of the characteristic curve. However, the $\beta^i$ are not quite type $1$ invariants, but rather are only type $1$ invariants if one only allows crossing changes on component $2$. This leads to the following definition:

\begin{definition}
A link invariant $f$ is \emph{type $k$ with respect to a given component} if, when given 
any set $S$ of any $k+1$ disjointly supported crossing changes of this component with itself, we have
$$\sum_{S'\subset S}(-1)^{|S'|} f(L_{S'})=0,$$
where $L_{S'}$ denotes the result of changing $L$ by exactly the homotopies in $S'$, $|S'|$ is the cardinality of $S'$, and the sum is over all subsets of $S$. 
\end{definition}

\begin{remark}\label{rem:fintype} 
As above, we can replace crossing changes with arbitrary homotopies of the given component supported in disjoint balls that avoid the other components. 
\end{remark}

\begin{remark}\label{rem:framings} 
In the above paragraphs, we introduced two distinct notions of finite type invariants, one for framed knots and one for links with a specified component. There is an obvious way to combine those notions, namely for links with a {\em specified framed} component. Cochran's derived link is exactly of that type and Proposition~\ref{prop:fintype} below holds for all $i\geq 0$ if we define $\beta^0(L):=\omega(L_2)$ for 2-component links with a framing on the specified component $L_2$. Then the proof below is an induction that naturally extends all the way down to $i=0$. 
\end{remark}

\textbf{Convention:} For the rest of the paper we assume that $L$ designates a link for which the $\beta^i$-invariants are defined, and that $L$ comes equipped with Seifert surfaces which intersect in a single circle.

\begin{proposition}\label{prop:fintype}
For all $i\geq 1$, the Cochran $\beta^i$-invariants of $L=(L_1,L_2)$ are type $1$ with respect to $L_2$.
\end{proposition}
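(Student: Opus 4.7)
I would proceed by induction on $i$, taking the extended base case $i=0$ from Remark~\ref{rem:framings}: setting $\beta^0(L):=\omega(L_2)$ with $L_2$ viewed as a framed knot, the fact that $\omega$ is type $1$ under self-crossing changes and non-tangential framing zeros is immediate from the defining skein relations for finite-type framed knot invariants. Although the proposition as stated begins at $i=1$, handling $i=0$ as base case gives a cleaner induction.

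For the inductive step from $i-1$ to $i$, I would use the definition $\beta^i(L)=\beta^{i-1}(D(L))$ with $D(L)=(L_1,\chi)$, where $\chi=\Sigma_1\cap\Sigma_2$ is framed coherently by either Seifert surface. Let $s_1,s_2$ be two disjointly-supported self-crossing changes of $L_2$; by Remark~\ref{rem:fintype} it suffices to treat arbitrary self-homotopies supported in disjoint balls $B_1,B_2\subset S^3\smallsetminus L_1$. Fix Seifert surfaces $\Sigma_1,\Sigma_2$ for $L$ with $\chi$ connected. For each subset $S\subset\{s_1,s_2\}$, construct a Seifert surface $\Sigma_2^S$ for $(L_1,(L_2)_S)$ by attaching to $\Sigma_2$ a small handle inside each $B_j$ with $s_j\in S$, realizing the trace of the self-homotopy $s_j$. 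Since the $B_j$'s are disjoint and each is disjoint from $L_1$, these attachments only locally modify the characteristic curve $\chi^S:=\Sigma_1\cap\Sigma_2^S$ inside the relevant $B_j$'s. Using the band-summing trick of Remark~\ref{rem:connect} with a fixed system of connecting arcs chosen in $\Sigma_1\smallsetminus(B_1\cup B_2)$ to restore connectedness of each $\chi^S$, one obtains two self-homotopies $s_1',s_2'$ of the framed knot $\chi$ (possibly with non-tangential framing changes) supported in disjoint balls, such that $D(L_{s_j})=(L_1,\chi_{s_j'})$ and $D(L_{s_1,s_2})=(L_1,\chi_{s_1',s_2'})$. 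The inductive hypothesis applied to $D(L)$ then gives
\begin{equation*}
\sum_{S\subset\{s_1,s_2\}}(-1)^{|S|}\,\beta^i(L_S)\;=\;\sum_{S\subset\{s_1',s_2'\}}(-1)^{|S|}\,\beta^{i-1}(D(L)_S)\;=\;0.
\end{equation*}

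The main technical obstacle is the coherent construction of the four Seifert surfaces $\Sigma_2,\Sigma_2^{s_1},\Sigma_2^{s_2},\Sigma_2^{s_1,s_2}$ so that the induced modifications of $\chi$ in $B_1$ and $B_2$ are truly independent --- both geometrically (disjoint supports) and as framed modifications (the relative twisting picked up inside $B_j$ is unaffected by what happens inside $B_k$). Because each attached handle meets $\Sigma_1$ only within the disjoint ball $B_j$ that supports it, the framing-independence follows from the geometric disjointness, and by fixing the band-summing arcs in $\Sigma_1\smallsetminus(B_1\cup B_2)$ independently of $S$, the four characteristic curves $\chi^S$ assemble into a genuine commuting square of framed self-homotopies of $\chi$, completing the reduction to the inductive hypothesis.
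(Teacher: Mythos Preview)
Your overall strategy---reduce to the derived link and induct, showing that two disjoint crossing changes on $L_2$ induce two disjointly-supported framed self-homotopies of the characteristic curve $\chi$---is exactly the paper's approach. However, there is a genuine gap in your construction of the derived links $D(L_S)$.

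You keep $\Sigma_1$ fixed throughout and only modify $\Sigma_2$ inside the balls $B_j$. But for $\chi^S := \Sigma_1 \cap \Sigma_2^S$ to compute the derived link of $L_S$, the surface $\Sigma_1$ must be a Seifert surface for $L_1$ \emph{in the complement of} $(L_2)_S$. After the crossing change $s_j$, the moving strand of $L_2$ may pierce $\Sigma_1$: if a sheet of $\Sigma_1$ sits between the two crossing strands inside $B_j$---and one cannot in general isotope $\Sigma_1$ out of $B_j$ while avoiding $L_2$, since such a sheet may separate the two arcs---then $(L_2)_{s_j} \cap \Sigma_1 \neq \emptyset$. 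Your $\chi^S$ is then not a characteristic curve for $L_S$ at all, and the reduction to the inductive hypothesis fails. A diagnostic: were each $B_j$ chosen small enough to miss $\Sigma_1$ entirely, your recipe would give $\chi^{s_j} = \chi$ and hence $\beta^i(L_{s_j}) = \beta^i(L)$ for every $i$, which is false in general (e.g.\ a single crossing change on $L_2$ turns the Whitehead link into the unlink).

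The paper's proof addresses precisely this point. It models each crossing change by an edge-clasper $\Gamma_j$ and first clears \emph{both} Seifert surfaces off the clasper interior. Sheets of $\Sigma_2$ are pushed off with the resulting ribbon singularities resolved by adding genus; sheets of $\Sigma_1$ are pushed off, which creates cancelling pairs of $\Sigma_1 \cap L_2$ points that are then removed by tubing $\Sigma_1$ along $L_2$. These tubes are the essential ingredient your sketch is missing: they carry arcs of $\chi$ through one leaf of $\Gamma_j$, so that once the surfaces have been prepared the clasper surgery visibly acts on $\chi$ as a local framed homotopy $H_j$ (twists plus self-crossings) supported near $\Gamma_j$. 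Only after this preparation of $\Sigma_1$ does the band-summing of Remark~\ref{rem:connect} do the right job---and note also that your connecting arcs cannot literally lie in $\Sigma_1 \smallsetminus (B_1 \cup B_2)$, since each must reach a new intersection circle inside some $B_j$.
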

\begin{proof}
We start by showing that the first Cochran invariant $\beta^1$ vanishes on an alternating sum over a pair of disjointly supported crossing changes on $L_2$. These crossing changes can be realized as a pair of edge-clasper surgeries $\Gamma_j$, $j=1,2$, where $\Gamma_j$ has two leaves, each of which is a meridian to $L_2$ and bounds a cap that intersects $\Sigma_2$ in a clasp singularity. We may further assume these caps are disjoint from $\Sigma_1$. 

Now the single edge of each $\Gamma_j$ will crash through sheets of both Seifert surfaces (Figure~\ref{fig:type1}, left). Pushing these sheets off the edge of $\Gamma_j$ will create ribbon singularities with $\Sigma_2$. When $\Sigma_2$ is pushed off, these new singularities can be resolved using the standard ribbon singularity resolution, increasing the genus of $\Sigma_2$ (Figure~\ref{fig:type1}, second from left, blue). 

When $\Sigma_1$ is pushed off, pairs of algebraically cancelling intersections between $\Sigma_1$ and $L_2$ will be created. Resolve these by adding tubes in a neighborhood of $L_2$ (Figure~\ref{fig:type1}, second from left, green). This will create new circles of intersections between $\Sigma_1$ and $\Sigma_2$ that travel through one leaf of each $\Gamma_j$. Apply the procedure of Remark~\ref{rem:connect} (as needed) to modify the surfaces in a neighborhood of $\Sigma_1\cup\Sigma_2$ so that $\Sigma_1\cap\Sigma_2$ becomes connected. At this point we have modified the surfaces so that they are disjoint from the interior of the claspers, but one leaf of each clasper may link the characteristic curve $\chi(L)$ of $L$. 
Note that we have  pushed the surface sheets across the same leaf in each clasper so the other leaf can be used as a root for surgery.
\begin{figure}
\includegraphics[width=\linewidth]{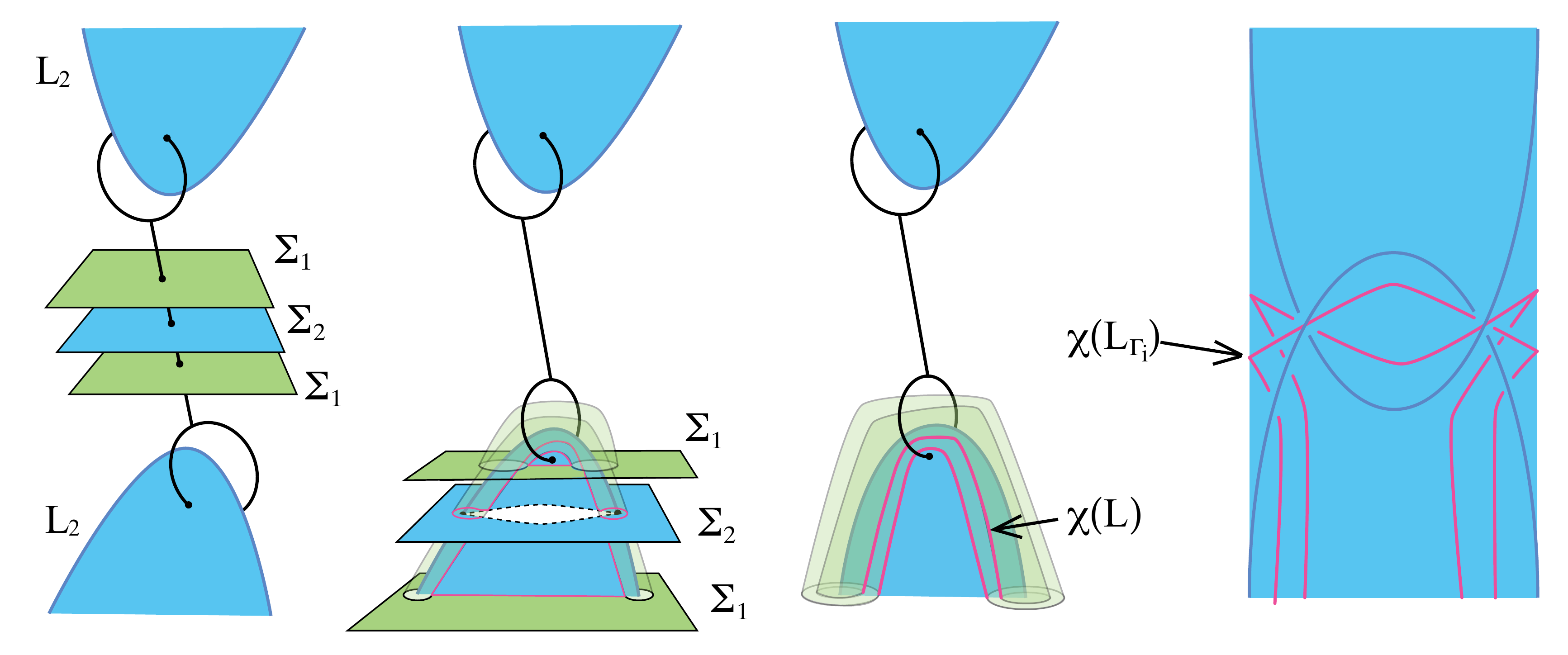}
\caption{Left: An edge-clasper $\Gamma_j$ on $L_2$ which has two interior intersections with $\Sigma_1$ and one interior intersection with $\Sigma_2$. Second from left: Pushing the surface sheets off the edge creates more genus and intersections among the surfaces. Third from left: Zooming in near the leaf, nested tubes from $\Sigma_1$ containing arcs of the characteristic curve $\chi(L)$ (red) of $L$ also link this leaf. Right: A local picture of the characteristic curve $\chi(L_{\Gamma_j})$ (red) for $L_{\Gamma_j}$ near the clasp created by $\Gamma_j$-surgery.
}
\label{fig:type1}
\end{figure}

Now we consider the new characteristic curves $\chi(L_{\Gamma_j})$ for the $L_{\Gamma_j}$. Each $L_{\Gamma_j}$ has an added clasp, and $\Sigma_2$ can be locally modified into a Seifert surface for component 2 of $L_{\Gamma_j}$, which locally looks like a disk with a twisted band attached, as in the far right hand side of Figure~\ref{fig:type1}. The tubes from $\Sigma_1$ consist of meridians to $L_2$ which now travel over this band. So the characteristic curve for $L_{\Gamma_j}$ is formed by a homotopy $H_j$ of $\chi(L)$ in a neighborhood of $\Gamma_j$ which will introduce the local twisting and crossing changes illustrated in the right-most picture of Figure~\ref{fig:type1}. Specifically, for each tube of $\Sigma_1$ passing through the leaf of $\Gamma_j$ the homotopy $H_j$ will create one twist in the characteristic curve framing, and crossing changes with the other arcs in tubes through the same leaf.

Since the twisting $\omega$ is a type 1 framed knot invariant, this shows that $\beta^1(L)$ is type $1$ with respect to $L_2$:
\begin{align*}
  \beta^1(L)&-\beta^1(L_{\Gamma_1}) -\beta^1(L_{\Gamma_2})+-\beta^1(L_{\Gamma_1\cup\Gamma_2})  \\
   &=\omega(\chi(L))-\omega(\chi(L_{\Gamma_1}))-\omega(\chi(L_{\Gamma_2}))+\omega(\chi  (L_{\Gamma_1\cup\Gamma_2})) \\
   &=\omega(\chi(L))-\omega(\chi(L)_{H_1}))-\omega(\chi(L)_{H_2}))+\omega(\chi  (L)_{H_1\cup H_2}))=0.
\end{align*}

Now to see that $\beta^i(L)$ is type 1 with respect $L_2$ for all $i>1$, observe that by the above constructions
the $H_j$ are disjointly supported homotopies of the second component of $D(L)$. So $D(L_{\Gamma_j})=D(L)_{H_j}$, and by induction
\begin{align*}
\beta^i(L)&-\beta^i(L_{\Gamma_1})-\beta^i(L_{\Gamma_2})+\beta^i(L_{\Gamma_1\cup\Gamma_2}) \\
=&\beta^{i-1}(D(L))-\beta^{i-1}(D(L)_{H_1})-\beta^{i-1}(D(L)_{H_2})+\beta^{i-1}(D(L)_{H_1 \cup H_2})=0.
\end{align*}

\end{proof}

\begin{corollary}\label{cor:three-twos} 
If $T$ is any framed tree having at least three $2$-labels, or any twisted tree having at least two $2$-labels, and 
$\Gamma$ is any $T$-clasper on $L$, then $\beta^i(L_{\Gamma})=\beta^i(L)$ for all $i\geq 1$.
\end{corollary}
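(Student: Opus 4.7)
The plan is to combine Proposition~\ref{prop:fintype}, which asserts that each $\beta^i$ is type $1$ with respect to $L_2$, with the clasper calculus principle that a simple tree-clasper with $n$ leaves on a single component realizes a degree $(n-1)$ modification of that component. Once $n \geq 3$ such leaves are on $L_2$, the modification is degree $\geq 2$ and is therefore killed by type-$1$ invariants with respect to $L_2$.

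For the framed case, suppose $T$ has three $2$-labeled leaves $\ell_1, \ell_2, \ell_3$ with simple caps $\Delta_1, \Delta_2, \Delta_3$ meeting $L_2$ each in one point. I would apply the zip construction iteratively to $\Delta_1, \Delta_2, \Delta_3$, splitting each cap along an arc from its link-intersection to its leaf-boundary. This decomposes $\Gamma$-surgery into daughter clasper surgeries supported in disjoint balls along $L_2$. Arranged correctly, the resulting surgeries realize the $T$-surgery as an iterated commutator of two disjointly-supported $L_2$-homotopies $H_1, H_2$, so that $L_\Gamma$ sits as one of the four links in a quadruple $(L, L_{H_1}, L_{H_2}, L_{H_1 \cup H_2})$ whose alternating $\beta^i$-sum vanishes by Proposition~\ref{prop:fintype}; since by construction $\beta^i$ takes the same value on the other three links, this forces $\beta^i(L_\Gamma) = \beta^i(L)$.

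For the twisted case, if $T$ has two $2$-labeled leaves plus one $k$-twisted leaf, I would use that a $k$-twisted leaf contributes an additional degree factor (essentially doubling the clasper's effect, via a parallel push-off of the twisted cap), effectively converting the twisted-with-two-$2$-labels configuration into a framed-with-three-$2$-labels configuration, and the same type-$1$ argument applies.

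The main obstacle will be making the decomposition into disjointly-supported $L_2$-homotopies rigorous enough that the four-term alternating sum of Proposition~\ref{prop:fintype} applies directly. The principle ``$n$ leaves on one component gives a degree $(n-1)$ modification'' is standard in clasper calculus, but carefully identifying the daughter claspers produced by iterated zipping, tracking the remaining leaves of $T$ (whether on $L_1$ or used internally to the tree), and executing the reinterpretation of the twisted leaf as an extra $2$-labeled arm, all require careful bookkeeping — essentially transporting the standard Habiro-Goussarov commutator identities into the specific setting of Seifert-surface based computations of $\beta^i$ inherited from the proof of Proposition~\ref{prop:fintype}.
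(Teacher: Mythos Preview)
Your endgame is exactly right: place $L_\Gamma$ as one corner of a type-$1$ square of $L_2$-homotopies whose other three corners each have $\beta^i$ equal to $\beta^i(L)$, and invoke Proposition~\ref{prop:fintype}. But the mechanism you propose to reach it --- iterated zip construction on the simple caps $\Delta_1,\Delta_2,\Delta_3$, then reinterpretation as an iterated commutator --- does not work as written. Zipping a \emph{simple} cap (a single link-intersection) along an arc through that point produces one daughter with a simple cap and one with a \emph{clean} cap; the clean-cap daughter acts trivially, so nothing has been gained toward a commutator decomposition. The commutator structure you are reaching for is a genuine Goussarov--Habiro fact, but it comes from cutting \emph{edges} of the clasper, not caps, and you correctly flag that extracting disjointly supported $L_2$-homotopies from it is where the real work lies.

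The paper's argument sidesteps all of this. One performs the $\Gamma$-surgery \emph{first}, choosing one $2$-labeled leaf as the root, so that only strands of $L_2$ run through the neighborhood of $\Gamma$. For each remaining non-root $2$-labeled leaf $\ell$ (respectively the twisted leaf, in the twisted case) there is an $L_2$-homotopy $H_\ell$ supported near $\ell$ that pushes the original $L_2$-strand across the new parallel strands (respectively untwists them). The effect of $H_\ell$ is precisely that of having cleaned the cap at $\ell$ before surgery, so $(L_\Gamma)_{H_\ell}=L$; and the same holds for any nonempty subset of these homotopies. The hypothesis furnishes at least two such non-root leaves, hence two disjointly supported homotopies, and Proposition~\ref{prop:fintype} gives $\beta^i(L_\Gamma)-\beta^i(L)-\beta^i(L)+\beta^i(L)=0$.

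So the paper never splits $\Gamma$ at all: the homotopies are defined directly on $L_\Gamma$ as ``cap-cleaning'' moves, the other three corners are literally $L$, and the framed and twisted cases are handled uniformly with no need to reinterpret a twisted leaf as an extra $2$-labeled arm.
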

\begin{proof}
We realize $L_\Gamma$
by doing surgery, using one of the $2$-labeled leaves of $\Gamma$ as the root. Now $L_\Gamma$ has strands of component $2$ (but not component $1$) going through a neighborhood of $\Gamma$.

Given a non-root simple leaf $\ell$ of $\Gamma$ whose cap $\Delta$ intersected $L_2$, in $L_\Gamma$ there are now multiple parallel strands of component $2$ running through where $\ell$ used to be. Let $H_\ell$ be a homotopy that pushes the strand of component $2$ that intersected $\Delta$ across the parallel strands. 
Then $(L_{\Gamma})_{H_\ell}$ is actually surgery of $L$ along a clasper that has a clean cap. This implies that ${(L_{\Gamma})_{H_\ell}}=L$. 

Given a non-root twisted leaf $\ell^\iinfty$ of $\Gamma$, let $H_{\ell^\iinfty}$ be a homotopy that straightens the strands of component $2$ traveling through where ${\ell^\iinfty}$ used to be. By similar reasoning as in the previous paragraph $(L_{\Gamma})_{H_{\ell^\iinfty}}=L$. 

Moreover, given several of these two types of homotopies supported near different leaves of the clasper, modifying $L_\Gamma$ by any nonempty subset will yield $L$, as it is equivalent to making more than one leaf bound a clean cap in the complement of both components.
The hypotheses of the corollary imply that we may find two disjointly supported homotopies of this form. Thus, by the Proposition~\ref{prop:fintype}: $\beta^i(L_{\Gamma})-\beta^i(L)-\beta^i(L)+\beta^i(L)=0$.
\end{proof}

Recall that $t^{\iinfty}_k$ is the order $k$ twisted tree having $k$ $1$-labels, one $2$-label and one $\iinfty$-label as in Figure~\ref{fig:t-tree}.

\begin{proposition}
If  $\Gamma$ is a $t^{\iinfty}_1$-clasper on $L$, then $L_{\Gamma}=B_{\omega(\Gamma)}\#_b L$, where $B_\omega$ is the $\omega$-twisted Bing double of the unknot, and $\#_b$ denotes a component-wise band-sum.
\end{proposition}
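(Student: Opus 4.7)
The plan is to reduce the statement to a local computation inside a ball around the clasper, and then identify that local model directly.

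First I would localize: choose a regular neighborhood $N$ of the $Y$-shaped clasper $\Gamma$ together with its three caps, small enough that $N$ is a ball meeting $L$ in two unknotted parallel arcs $\alpha_1 \subset L_1$ and $\alpha_2 \subset L_2$ (passing through the two simple caps) and disjoint from $L$ elsewhere (in particular disjoint from the $\omega$-twisted cap). Since clasper surgery is supported in a neighborhood of the clasper together with its caps, $L_\Gamma$ agrees with $L$ outside $N$, so it suffices to identify the local modification of $\alpha_1 \cup \alpha_2$ inside $N$.

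Next I would compute the local model explicitly using the standard description of $Y$-clasper surgery as a Borromean-type modification (see Habiro \cite{Hab00} and section~\ref{subsec:clasper-conventions}). For a $Y$-clasper with three simple caps meeting three arcs, surgery replaces each arc by an arc band-summed to one component of a Borromean rings triple. In our setting the cap of the twisted leaf is disjoint from $L$, so the third Borromean component is not band-summed to any strand of $L$; instead, the twisting $\omega$ of that leaf is absorbed into the local picture as $\omega$ units of twisting/linking between the two remaining Borromean components. Concretely, I would handle the twisted leaf by trading the $\omega$-twisted cap for a simple cap meeting an auxiliary $\omega$-framed unknotted component $U$, performing the resulting untwisted $Y$-clasper surgery on $L \cup U$ (which is the standard Borromean move), and then eliminating $U$ by a Kirby-type slide that transfers its framing into $\omega$ twists between the two new small circles $c_1, c_2$ that appear inside $N$.

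After this reduction, the picture inside $N$ consists of two small circles $c_1, c_2$ band-summed to $\alpha_1, \alpha_2$ respectively, and linked together in the 2-component pattern that, by direct inspection, is a standardly embedded copy of the $\omega$-twisted Bing double $B_\omega$ of the unknot. Globally, the link $L_\Gamma$ is therefore the component-wise band sum $B_{\omega(\Gamma)} \#_b L$, as claimed.

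The main obstacle in carrying this out rigorously lies in the intermediate step eliminating the auxiliary $\omega$-framed component $U$: one must carefully track framings and twists through the Kirby-type reduction and verify by an explicit diagrammatic comparison (rather than merely a comparison of finite-type or Milnor invariants) that the resulting local 2-component pattern is isotopic to the standard model of $B_\omega$ on the nose. Once this local identification is in place, the rest of the proof is a direct application of the support property of clasper surgery.
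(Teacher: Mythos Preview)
Your localization step is correct and is exactly what the paper does implicitly: the proof in the paper is literally a single picture of the $Y$-surgery in a ball around $\Gamma$, followed by the one-line observation that the resulting local tangle is $B_{\omega(\Gamma)}$ attached to the two incoming strands by bands along the two non-twisted clasper edges. There is no intermediate Kirby calculus at all.

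Your detour through the auxiliary curve $U$, however, does not work as written. You treat $U$ simultaneously as a link component (so that the ``untwisted'' $Y$-surgery becomes the standard Borromean move on $L_1,L_2,U$) and as a framed surgery curve (so that it can be ``eliminated by a Kirby-type slide''). Taking the second reading seriously, a slam-dunk computation shows that replacing the $\omega$-twisted clean leaf by a $0$-framed simple leaf linking an unknotted surgery curve $U$ forces $U$ to carry framing $-1/\omega$, not $\omega$: the slam-dunk of $U$ into the leaf gives the leaf framing $0-1/f$, and you need this to equal $\omega$. With the framing $\omega$ you wrote, integer surgery on an unknot yields the lens space $L(\omega,1)$ for $|\omega|\geq 2$, so there is no Kirby move in $S^3$ that eliminates it and ``transfers $\omega$ twists'' to the neighboring strands. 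With the corrected framing $-1/\omega$ one does get a Rolfsen twist inserting $\omega$ full twists along a disk for the third Borromean component, but to finish you must then identify the resulting two-component local pattern with $B_\omega$ --- and that identification is precisely the direct picture the paper draws in the first place. So the auxiliary-$U$ maneuver, once repaired, does not actually avoid the local diagrammatic comparison you flagged as the main obstacle; it only repackages it.
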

Here the bands guiding the band-sum run along the two edges of $\Gamma$ whose caps intersect $L$. 
\begin{proof}
Here is a picture of surgery on a $t^{\iinfty}_1$-clasper with twisting $\omega$.
\begin{center}
\includegraphics[width=6cm]{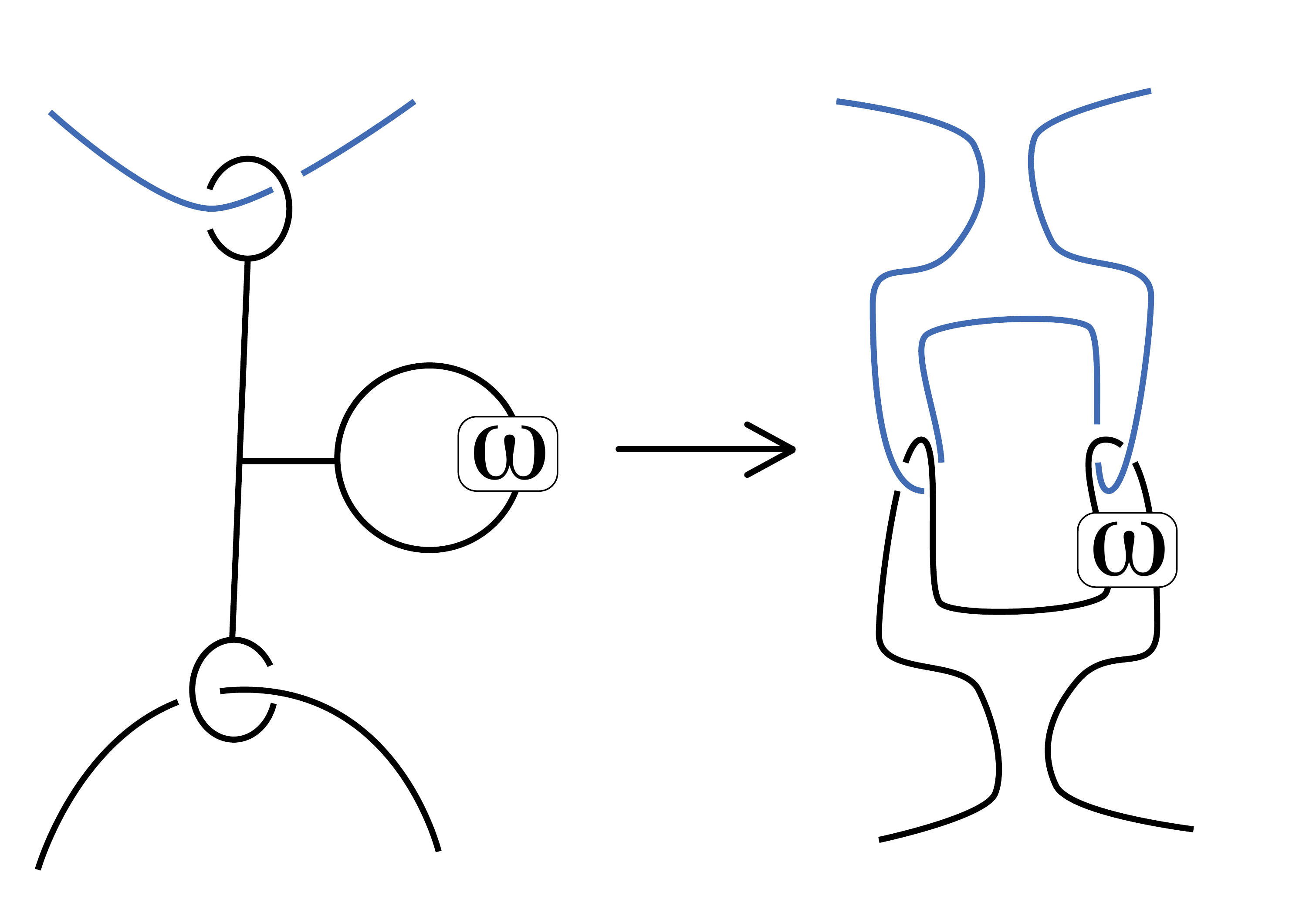}
\end{center}
This is exactly a band sum with $B_{\omega(\Gamma)}$ as claimed.
\end{proof}

\begin{corollary} \label{cor:epsilon}
Let $\Gamma$ be a $t_1^{\iinfty}$-clasper on $L$.
If $i\geq 2$, then $\beta^i(L)=\beta^i(L_{\Gamma})$, and $\beta^1(L_{\Gamma})=\omega(\Gamma)+\beta^1(L)$.
\end{corollary}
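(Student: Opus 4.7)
The plan is to leverage the preceding Proposition, which identifies $L_{\Gamma}$ with the component-wise band sum $B_{\omega(\Gamma)}\#_b L$, and then track the effect of this band sum on Seifert surfaces, characteristic curves, and derived links.

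I would first choose Seifert surfaces $\Sigma_1,\Sigma_2$ for $L$ meeting in a single characteristic curve $\chi(L)$. For the local $B_{\omega(\Gamma)}$ summand, I would use the Example~\ref{ex:Whitehead} construction: take the obvious disks spanned by the two unknotted components of $B_{\omega(\Gamma)}$, tube off their mutual intersections, and thereby obtain Seifert surfaces whose intersection is a single unknot $\chi(B_{\omega(\Gamma)})$ with framing twisting equal to $\omega(\Gamma)$, while the pair $(L_1(B_{\omega(\Gamma)}),\chi(B_{\omega(\Gamma)}))$ forms a split unlink in a small ball near $\Gamma$. Next I would extend the bands of $L_\Gamma$ to surface bands joining corresponding Seifert surfaces, producing Seifert surfaces $\tilde\Sigma_1,\tilde\Sigma_2$ for $L_\Gamma$ whose intersection consists of the two circles $\chi(L)\sqcup\chi(B_{\omega(\Gamma)})$. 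Applying the procedure of Remark~\ref{rem:connect} to join these via an arc produces a connected characteristic curve $\chi(L_\Gamma)=\chi(L)\#_b\chi(B_{\omega(\Gamma)})$.

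The Sato-Levine formula for $i=1$ follows immediately from additivity of twisting under band sum:
$$\beta^1(L_\Gamma)=\omega(\chi(L_\Gamma))=\omega(\chi(L))+\omega(\chi(B_{\omega(\Gamma)}))=\beta^1(L)+\omega(\Gamma).$$
For $i\geq 2$, the key observation is that with these choices $D(L_\Gamma)$ is obtained from $D(L)$ by component-wise band-summing with the local split unlink $D(B_{\omega(\Gamma)})$ inside a small ball near $\Gamma$. Since this two-component unlink bounds a pair of disjointly embedded disks in the ball, one can use these disks to build an explicit null-concordance of the inserted unlink in $S^3\times[0,1]$, showing that $D(L_\Gamma)$ is concordant to $D(L)$. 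Combined with concordance invariance of $\beta^{i-1}$ and the recursion $\beta^i=\beta^{i-1}\circ D$, induction then gives $\beta^i(L_\Gamma)=\beta^{i-1}(D(L_\Gamma))=\beta^{i-1}(D(L))=\beta^i(L)$.

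The main obstacle I anticipate is the concordance argument for $i\geq 2$: one must verify carefully that the bands inherited from the clasper edges (together with the connecting arc supplied by Remark~\ref{rem:connect}) are sufficiently standard that the local unlink $D(B_{\omega(\Gamma)})$ can be canceled via the bounding disks without introducing nontrivial linking with $D(L)$. The capped structure of $\Gamma$, together with the fact that the bands are $0$-framed and disjointly supported in a small ball, should make this verification routine but requires tracking the bands and their framings through the clasper surgery.
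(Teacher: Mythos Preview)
Your approach is correct, but it differs from the paper's in that the paper simply cites Cochran's additivity theorem \cite[Thm.5.6]{Co}: the invariants $\beta^i$ are additive under arbitrary component-wise band sum, so once the preceding Proposition gives $L_\Gamma = B_{\omega(\Gamma)}\#_b L$, one only needs the values $\beta^1(B_\omega)=\omega$ and $\beta^i(B_\omega)=0$ for $i\geq 2$. Your argument is in effect a hands-on verification of that additivity in this particular case: you build Seifert surfaces for the band-sum, see that the characteristic curve is a band-sum of characteristic curves (yielding the $i=1$ formula by additivity of twisting), and then observe that the derived link is a band-sum with the unlink $D(B_\omega)$, which can be cancelled by a concordance. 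This is exactly the mechanism underlying Cochran's proof of additivity, so the two routes are closely related; the paper's buys brevity by invoking the general theorem, while yours is more self-contained and makes the geometry explicit. One small remark: no induction is actually needed in your $i\geq 2$ step, since once $D(L_\Gamma)$ is concordant to $D(L)$ the equality $\beta^{i-1}(D(L_\Gamma))=\beta^{i-1}(D(L))$ holds for all $i\geq 2$ directly by concordance invariance.
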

\begin{proof}
The invariants $\beta^i$ are additive under arbitrary band sum \cite[Thm.5.6]{Co}, and their value on $B_\omega$ are $0$ for $i\geq 2$ and $\omega$ for $i=1$.
\end{proof}

\begin{lemma}\label{lem:clasper-off-seifert-surfaces}
Let $T$ be a twisted tree with only one $2$-label. Then for any $T$-clasper $\Gamma$ on $L$, there exists a $T$-clasper $\Gamma'$ on $L$ satisfying the following:
\begin{enumerate}
\item
 $\beta^i(L_\Gamma)=\beta^i(L_{\Gamma'})$ for all $i\geq 1$. 
 \item
 $\omega(\Gamma')=\omega(\Gamma)$.
 \item\label{lem-item:clasper-disjoint-from-seifert}
The interior of $\Gamma'$ is disjoint from possibly new Seifert surfaces for $L$, and taking the derived link $D(L)$ with respect to the new Seifert surfaces is the same as the derived link taken with respect to the original Seifert surfaces. 
\end{enumerate}
\end{lemma}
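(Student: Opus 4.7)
The plan is to set $\Gamma' := \Gamma$ as a clasper, so that conditions (1) and (2) hold immediately ($L_{\Gamma'} = L_\Gamma$ gives $\beta^i(L_\Gamma) = \beta^i(L_{\Gamma'})$, and the framing of $\Gamma$ is unchanged), and to produce modified Seifert surfaces $\Sigma_1'$, $\Sigma_2'$ for $L$ whose interior-avoidance property and characteristic curve will establish condition (3). After putting $\Gamma$ in general position with $\Sigma_1 \cup \Sigma_2$, each edge of $\Gamma$ meets the surfaces transversely in a finite collection of points, which I would eliminate one at a time by local modifications of the surfaces supported in a thin tubular neighborhood $N$ of $\Gamma$, chosen from the outset to be disjoint from the original characteristic curve $\chi$.

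For each transverse intersection of $\Sigma_2$ with an edge of $\Gamma$, the plan is to push the local sheet of $\Sigma_2$ off the edge, exactly as in the proof of Proposition~\ref{prop:fintype}: this creates a ribbon singularity that I would resolve by tubing, raising the genus of $\Sigma_2$. For each intersection of $\Sigma_1$ with an edge, the same push-off creates a cancelling pair of intersections of $\Sigma_1$ with the link component met by the cap of the adjacent leaf, which I would resolve by adding a tube to $\Sigma_1$ around that component. Here the hypothesis that $T$ carries exactly one $2$-label is essential: it guarantees that tubes on $\Sigma_1$ wrapping around $L_2$ arise only in a neighborhood of the single $2$-labeled leaf, while all other leaves produce tubes involving only $L_1$ or the twisted (clean) leaf.

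The resulting surfaces $\Sigma_1'$, $\Sigma_2'$ now have interior disjoint from the edges of $\Gamma$, but intersect in $\chi$ together with a finite collection of new circles supported in $N$. I would then apply the procedure of Remark~\ref{rem:connect}, surgering $\Sigma_2'$ along arcs running from $\chi$ to each new circle, to produce a connected characteristic curve $\chi'$.

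The main obstacle is to show that $\chi'$ is isotopic to $\chi$ in $S^3 \setminus L_1$, so that the derived links $(L_1, \chi')$ and $(L_1, \chi)$ coincide. The strategy is to track each new intersection circle back to its origin: it arises from a tube added near a specific leaf and lies in a disk inside $N$ whose intersection with $L$ is combinatorially controlled by that leaf's label. Using the one-$2$-label hypothesis, the band-summing arcs can be routed so that each new circle, together with its band, is absorbed into $\chi$ by an isotopy in $N \setminus L_1$ rather than a genuine knot modification, leaving $\chi' \simeq \chi$ and hence the derived link unchanged.
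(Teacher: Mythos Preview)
Your approach differs substantially from the paper's, and as written it has a real gap in the handling of the $\Sigma_2$--intersections and in the final isotopy claim.

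The paper does \emph{not} set $\Gamma'=\Gamma$. It first removes interior intersections with $\Sigma_2$ by \emph{moving the clasper}: do the surgery with the unique $2$-labeled leaf as root, so that $L_\Gamma$ differs from $L$ only along component~$2$, and then set up two disjointly supported homotopies of that component, $H_1$ (untwisting the strands near the twisted leaf) and $H_2$ (pushing what used to be $\Gamma$ across $\partial\Sigma_2=L_2$). Since $(L_\Gamma)_{H_1}=L=(L_\Gamma)_{H_1\cup H_2}$, Proposition~\ref{prop:fintype} gives $\beta^i(L_\Gamma)=\beta^i((L_\Gamma)_{H_2})=\beta^i(L_{\Gamma'})$, where $\Gamma'$ is the pushed clasper, now with interior disjoint from $\Sigma_2$. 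Only after that does the paper modify a surface, and only $\Sigma_1$: every sheet of $\Sigma_1$ is pushed \emph{away} from the $2$-labeled leaf, toward the $1$-labeled and twisted leaves, so each resulting singularity is a ribbon self-singularity of $\Sigma_1$ (or a clean push-off over the twisted cap) and $\chi$ is literally unchanged.

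Your plan of keeping $\Gamma$ fixed and modifying both surfaces breaks down exactly where you say the ``main obstacle'' lies. When you push a sheet of $\Sigma_2$ off an edge ``exactly as in the proof of Proposition~\ref{prop:fintype}'', recall that there \emph{both} leaves of the edge-clasper are meridians to $L_2$, so either direction yields a ribbon self-singularity of $\Sigma_2$. Here only one leaf is $2$-labeled; pushing $\Sigma_2$ toward a $1$-labeled leaf forces $\Sigma_2$ across $L_1$, and the tube you would add around $L_1$ meets $\Sigma_1$ in meridians of $L_1$, which are \emph{not} null-homotopic in $S^3\setminus L_1$ --- so the derived link can genuinely change. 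Conversely, you explicitly allow tubes on $\Sigma_1$ wrapping around $L_2$ (from pushing $\Sigma_1$ toward the $2$-labeled leaf); the new $\Sigma_1\cap\Sigma_2$ circles those produce are not shown to be ``absorbed into $\chi$ by an isotopy in $N\setminus L_1$'', and no argument is offered.

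Your route might be salvageable with disciplined routing --- always push $\Sigma_1$ away from the $2$-labeled leaf and $\Sigma_2$ away from every $1$-labeled leaf --- but that is not what you describe, and one would still have to check that the fingers created along the tree avoid the other surface. The paper's use of the type-$1$ property to slide the clasper off $\Sigma_2$ is precisely the device that bypasses all of this.
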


\begin{proof}
Starting with the original Seifert surfaces $\Sigma_1$, $\Sigma_2$ for $L$, we may assume that each $j$-labeled cap of $\Gamma$ meets  $\Sigma_j$ in a clasp singularity, but is otherwise disjoint from both Seifert surfaces. 
First, we find a new $T$-clasper $\Gamma'$ such that the interior of $\Gamma'$ is disjoint from $\Sigma_2$, and $L_{\Gamma'}$ has the same Cochran invariants as $L_\Gamma$. 
If the interior of $\Gamma$ is not already disjoint from $\Sigma_2$, then do surgery on $\Gamma$, using the $2$-labeled leaf as the root. Then we can create the following two disjointly supported homotopies of the second component of $L_{\Gamma}$. The homotopy $H_1$ untwists the twisted strands of component $2$ near what was the twisted leaf of $\Gamma$. The homotopy $H_2$ pushes $\Gamma$ across the boundary of $\Sigma_2$, removing all intersections with $\Sigma_2$. (Here the $H_2$ is really pushing strands of component $2$ that are contained in a neighborhood of where $\Gamma$ was before the surgery.) Now $\beta^i(L_{\Gamma})-\beta^i((L_{\Gamma})_{H_1})-
\beta^i((L_{\Gamma})_{H_2})+\beta^i((L_{\Gamma})_{H_1\cup H_2})=0$. But $(L_{\Gamma})_{H_1}=L=(L_{\Gamma})_{H_1\cup H_2}$, so we get that $\beta^i(L_{\Gamma})=\beta^i((L_{\Gamma})_{H_2})$.
Now $(L_{\Gamma})_{H_2}=L_{\Gamma'}$, where $\Gamma'$ is a $T$-clasper (the result of applying $H_2$ to 
$\Gamma$) that has no interior intersections with $\Sigma_2$. 

So far we have produced a $T$-clasper $\Gamma'$ on $L$ satisfying $\forall i\,\,\beta^i(L_{\Gamma})=\beta^i(L_{\Gamma'})$ which does not have interior intersections with $\Sigma_2$. 
If the interior of $\Gamma'$ is also disjoint from $\Sigma_1$ we are done.

Otherwise, suppose that the interior of $\Gamma'$ is not disjoint from $\Sigma_1$. Since only one leaf of $\Gamma'$ links $L_2$, we can push all of these interior intersections 
away from the $2$-labeled leaf and toward the $1$-labeled leaves and twisted leaf, eventually pushing sheets of $\Sigma_1$ off of these leaves and resolving the new self-intersections of $\Sigma_1$ via the standard ribbon singularity resolution, as in the proof of Proposition~\ref{prop:fintype}. 
This modification of $\Sigma_1$ does not affect the original characteristic curve $\chi$. Moreover, $L$ and $\Gamma'$ are unchanged by this procedure. So $\forall i\,\,\beta^i(L_{\Gamma})=\beta^i(L_{\Gamma'})$, and we have removed all intersections between the interior of $\Gamma'$ and both Seifert surfaces.
\end{proof}

\begin{proposition}\label{prop:induct}
Let $\Gamma$ be a $t^{\iinfty}_k$-clasper on $L$ for $k\geq 2$. Then 
there exists a $t^{\iinfty}_k$-clasper $\Gamma'$ on $L$ such that  $\beta^i(L_{\Gamma})=\beta^i(L_{\Gamma'})$, for all $i\geq 1$, and
a $t^{\iinfty}_{k-1}$-clasper ${\Gamma''}$ on $D(L)$ such that 
$D(L_{\Gamma'})=D(L)_{\Gamma''}$ and $\omega(\Gamma)=\omega(\Gamma'')$.
\end{proposition}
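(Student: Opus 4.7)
The plan is to use Lemma~\ref{lem:clasper-off-seifert-surfaces} as the first step, producing a $t^{\iinfty}_k$-clasper $\Gamma'$ on $L$ whose interior is disjoint from both Seifert surfaces $\Sigma_1$ and $\Sigma_2$, and which satisfies $\beta^i(L_\Gamma) = \beta^i(L_{\Gamma'})$ for all $i \geq 1$, $\omega(\Gamma') = \omega(\Gamma)$, and whose (possibly modified) Seifert surfaces still compute $D(L)$. This delivers the first clause of the proposition and reduces the problem to extracting $\Gamma''$ from $\Gamma'$.

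Since $k \geq 2$, the tree $t^{\iinfty}_k$ is a caterpillar with a $\{1,2\}$-fork at one end, a $\{1,\iinfty\}$-fork at the other end, and $k-2$ lone $1$-leaves sprouting off the spine in between (no middle spine when $k=2$). Let $v$ be the trivalent vertex of the $\{1,2\}$-fork, with adjacent $1$-leaf $\ell_1$, $2$-leaf $\ell_2$, and outgoing edge $e$ into the rest of the tree. I would next construct Seifert surfaces $\tilde\Sigma_1$, $\tilde\Sigma_2$ for $L_{\Gamma'}$ by starting from $\Sigma_1, \Sigma_2$, pushing them off the edges of $\Gamma'$ using the ribbon-resolution and $L_j$-tubing tricks employed in the proof of Proposition~\ref{prop:fintype}, and then repairing any newly-created intersection circles via the band-summing procedure of Remark~\ref{rem:connect}. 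All of these modifications can be chosen to be supported in a neighborhood of the caps of $\Gamma'$.

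The key geometric step is to identify the new characteristic curve $\tilde\chi = \tilde\Sigma_1 \cap \tilde\Sigma_2$ with the result of a $t^{\iinfty}_{k-1}$-surgery on $\chi$. Near the $\{1,2\}$-fork, the local effect of $\Gamma'$-surgery is to force $\tilde\Sigma_2$ to acquire a tube through where $\ell_2$ used to sit and $\tilde\Sigma_1$ to pick up new genus at the clasp of $\ell_1$; these modifications combine to route a new arc of $\tilde\chi$ along the edge $e$. In the complement of the fork, what remains of $\Gamma'$ may thus be reinterpreted as a clasper $\Gamma''$ on $D(L) = (L_1, \chi)$, in which the former edge $e$ terminates at $v$ in a simple $2$-cap meeting $\chi$, the $k-2$ middle $1$-leaves still meet $\Sigma_1 \subset \tilde\Sigma_1$, and the $\{1,\iinfty\}$-fork at the opposite end is preserved unchanged. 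The resulting tree type is precisely $t^{\iinfty}_{k-1}$, and since the twisted cap is never altered, $\omega(\Gamma'') = \omega(\Gamma') = \omega(\Gamma)$.

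The main obstacle is the careful geometric verification of the previous paragraph: confirming that the Seifert-surface modifications near the $\{1,2\}$-fork really do produce a simple $2$-cap for $\chi$ at $v$, that no extraneous intersection components appear in $\tilde\chi$ to disconnect it or alter its isotopy type, and that the resulting $\Gamma''$ is a simple $t^{\iinfty}_{k-1}$-clasper on the nose satisfying $D(L_{\Gamma'}) = D(L)_{\Gamma''}$. I expect a figure (along the lines of the Figure~\ref{fig:main-induction} referenced in Remark~\ref{rem:connect}) to be essentially indispensable for conveying this local picture convincingly.
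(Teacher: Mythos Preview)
Your proposal is correct and follows the same geometric idea as the paper. The paper organizes the second step a bit more cleanly: rather than extending the Seifert surfaces through the full $\Gamma'$-surgery and then reinterpreting ``what remains'', it first inserts a Hopf-pair of leaves into the edge $e$ adjacent to the $\{1,2\}$-fork, splitting $\Gamma'$ into a Y-clasper (carrying the $1$-leaf, the $2$-leaf, and one Hopf leaf) and a remainder $\Gamma''$ (carrying the other $k-1$ $1$-leaves, the twisted leaf, and the other Hopf leaf). Y-surgery alone yields a link isotopic to $L$, and extending $\Sigma_1,\Sigma_2$ through that Y-surgery routes a new arc of characteristic curve through the surviving Hopf leaf of $\Gamma''$; thus $\Gamma''$ is literally a simple $t^{\iinfty}_{k-1}$-clasper on $D(L)$ with $\omega(\Gamma'')=\omega(\Gamma')$, and $D(L_{\Gamma'})=D(L)_{\Gamma''}$ follows because the new arc contracts along its band once $\Gamma''$ is removed. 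This Hopf-pair splitting is just a tidier bookkeeping device for exactly the local picture you describe. One small wording issue: after invoking Lemma~\ref{lem:clasper-off-seifert-surfaces} the interior of $\Gamma'$ is already disjoint from $\Sigma_1,\Sigma_2$, so there is nothing left to ``push off the edges''; the Seifert-surface modifications you need are extensions near the caps, as you correctly say in the next sentence.
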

\begin{proof}
Take $\Gamma'$ to be the clasper guaranteed by Lemma~\ref{lem:clasper-off-seifert-surfaces},
with the interior of $\Gamma'$ disjoint from both Seifert surfaces for $L$, as in the upper left of Figure~\ref{fig:main-induction}.
The clasper calculus and constructions illustrated in Figure~\ref{fig:main-induction} show how splitting off a Y-clasper from $\Gamma'$ yields the desired $t^{\iinfty}_{k-1}$-clasper ${\Gamma''}$ on $D(L)$.
\end{proof}

We get the following corollary of Proposition~\ref{prop:induct} and Corollary~\ref{cor:epsilon} by induction:
\begin{corollary}\label{cor:gamma-t}
If $\Gamma$ is a $t^{\iinfty}_k$-clasper on $L$, then $\beta^k(L_{\Gamma})=\omega(\Gamma)+\beta^k(L)$, and if $i\neq k$, then $\beta^i(L)=\beta^i(L_{\Gamma})$. $\hfill\square$

\end{corollary}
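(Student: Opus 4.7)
The plan is to proceed by induction on $k$, using Corollary~\ref{cor:epsilon} as the base case and Proposition~\ref{prop:induct} as the engine of the inductive step. The base case $k=1$ is immediate: Corollary~\ref{cor:epsilon} establishes both $\beta^1(L_\Gamma) = \omega(\Gamma) + \beta^1(L)$ and $\beta^i(L_\Gamma) = \beta^i(L)$ for all $i \geq 2$, which matches the claim verbatim when $k=1$. For the inductive step I fix $k \geq 2$ and a $t^{\iinfty}_k$-clasper $\Gamma$ on $L$, and apply Proposition~\ref{prop:induct} to produce the replacement $t^{\iinfty}_k$-clasper $\Gamma'$ on $L$ (satisfying $\beta^i(L_\Gamma) = \beta^i(L_{\Gamma'})$ for all $i \geq 1$) together with a $t^{\iinfty}_{k-1}$-clasper $\Gamma''$ on $D(L)$ satisfying $D(L_{\Gamma'}) = D(L)_{\Gamma''}$ and $\omega(\Gamma'') = \omega(\Gamma)$.

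For each $i \geq 2$ I chain these equalities with the defining recursion $\beta^i = \beta^{i-1}\circ D$ to obtain
\[
\beta^i(L_\Gamma) \;=\; \beta^i(L_{\Gamma'}) \;=\; \beta^{i-1}(D(L_{\Gamma'})) \;=\; \beta^{i-1}(D(L)_{\Gamma''}),
\]
and then invoke the inductive hypothesis at level $k-1$ with index $i-1$. When $i=k$ the right-hand side evaluates to $\omega(\Gamma'') + \beta^{k-1}(D(L)) = \omega(\Gamma) + \beta^k(L)$, and when $i \neq k$ with $i \geq 2$ it evaluates to $\beta^{i-1}(D(L)) = \beta^i(L)$. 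In both cases the claim follows.

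The one remaining case, and the main obstacle, is $i = 1$ with $k \geq 2$: here the recursion $\beta^1 = \beta^0 \circ D$ is not literally available from Definition~\ref{def:beta}, since $\beta^0$ has not been defined in the main text. To resolve this I will invoke Remark~\ref{rem:framings} and extend the induction to $i=0$ by setting $\beta^0(L) := \omega(L_2)$ for $2$-component links whose second component carries a specified framing; the derived link $D(L)$ has a natural Seifert-surface framing on its characteristic curve $\chi$ with $\beta^0(D(L)) = \omega(\chi) = \beta^1(L)$, so this extension is compatible with the recursion. The extended base case $k=1$, $i=0$ is then a direct check that $t^{\iinfty}_1$-surgery preserves the framing on $L_2$, using the identification $L_\Gamma = B_{\omega(\Gamma)} \#_b L$ and the fact that the second component of the Bing double is $0$-framed so that componentwise band-summing does not alter the framing inherited from $L_2$. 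With this in place, the inductive step at $i=1$, $k \geq 2$ runs exactly as above: $\beta^1(L_\Gamma) = \beta^0(D(L_{\Gamma'})) = \beta^0(D(L)_{\Gamma''}) = \beta^0(D(L)) = \beta^1(L)$, where the penultimate equality uses the inductive hypothesis at level $k-1 \geq 1$ applied with index $0 \neq k-1$. Once this $\beta^0$-bookkeeping is settled, the rest of the argument is purely mechanical.
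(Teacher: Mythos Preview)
Your proof is correct and follows the same inductive approach as the paper, which simply states that the corollary follows ``by induction'' from Proposition~\ref{prop:induct} and Corollary~\ref{cor:epsilon} without further detail. Your careful treatment of the $i=1$ case via the $\beta^0$ extension of Remark~\ref{rem:framings} makes explicit a step the paper leaves implicit; one could alternatively note that clasper surgery is a diffeomorphism of $S^3$ and hence preserves the self-linking of the framed characteristic curve directly, but your route is in the spirit of the paper's own remark.
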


\begin{figure}
\begin{center}
\includegraphics[width=.4\linewidth]{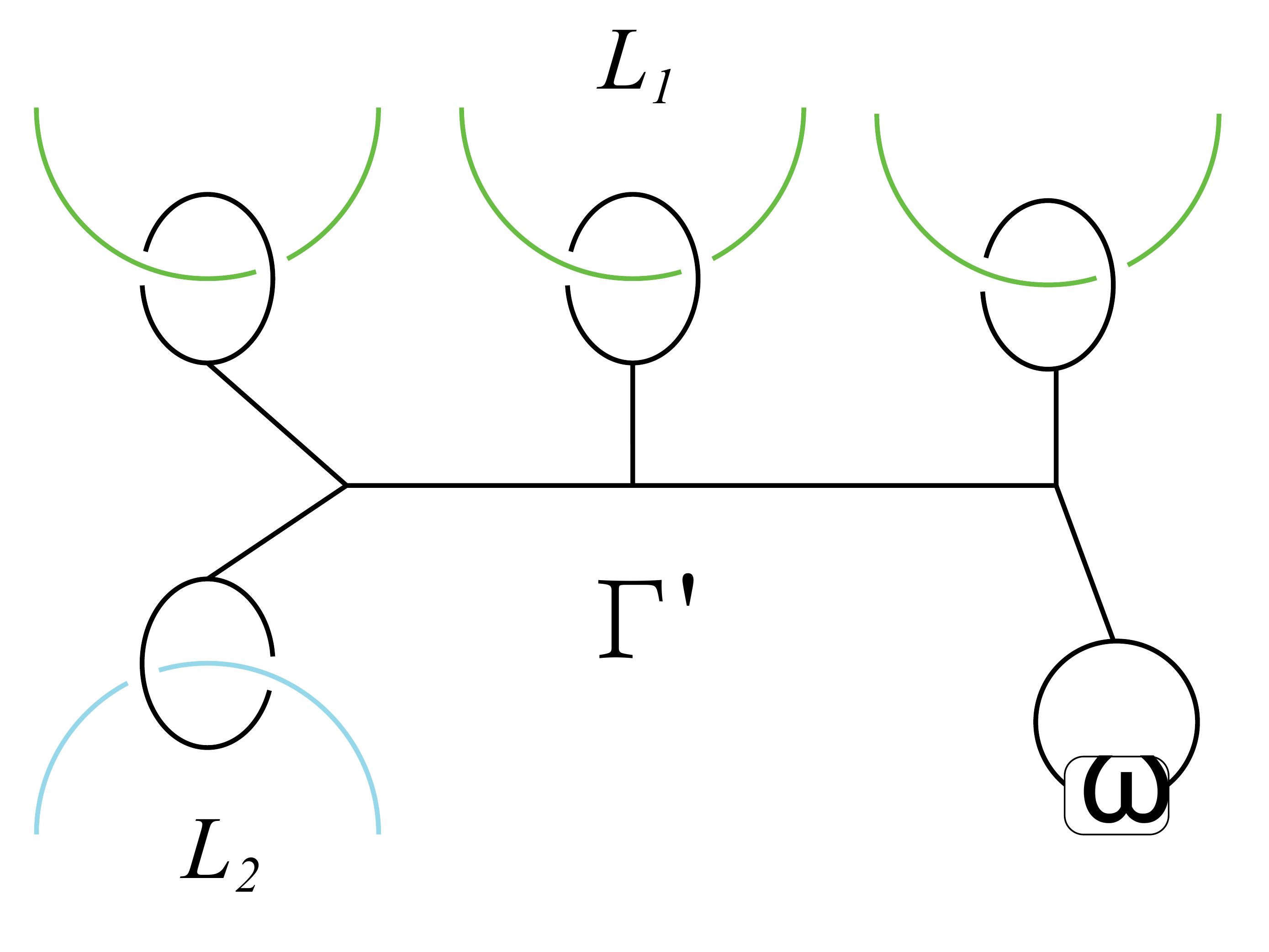}\hspace{2em}
\includegraphics[width=.45\linewidth]{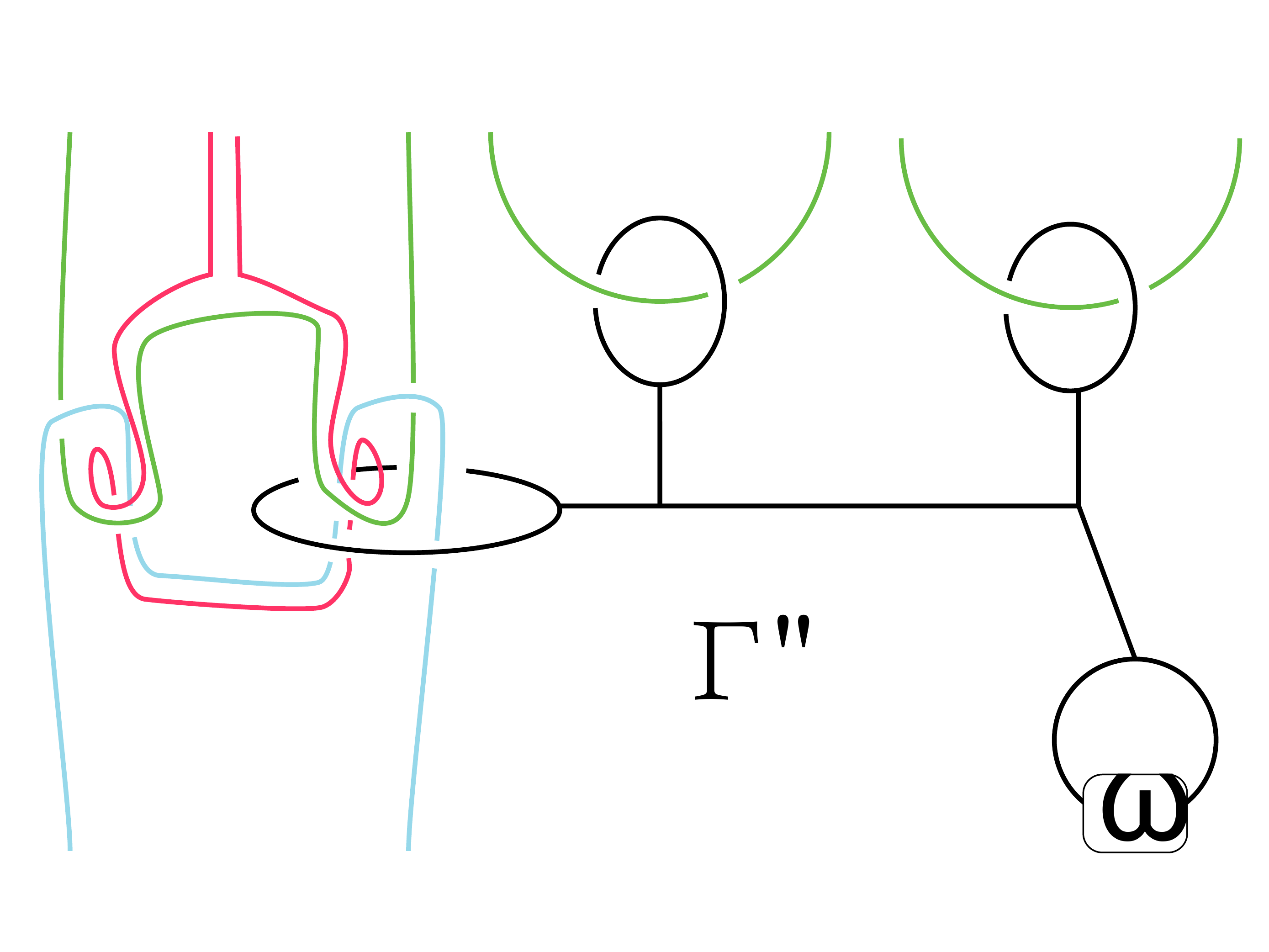}\\
\includegraphics[width=.4\linewidth]{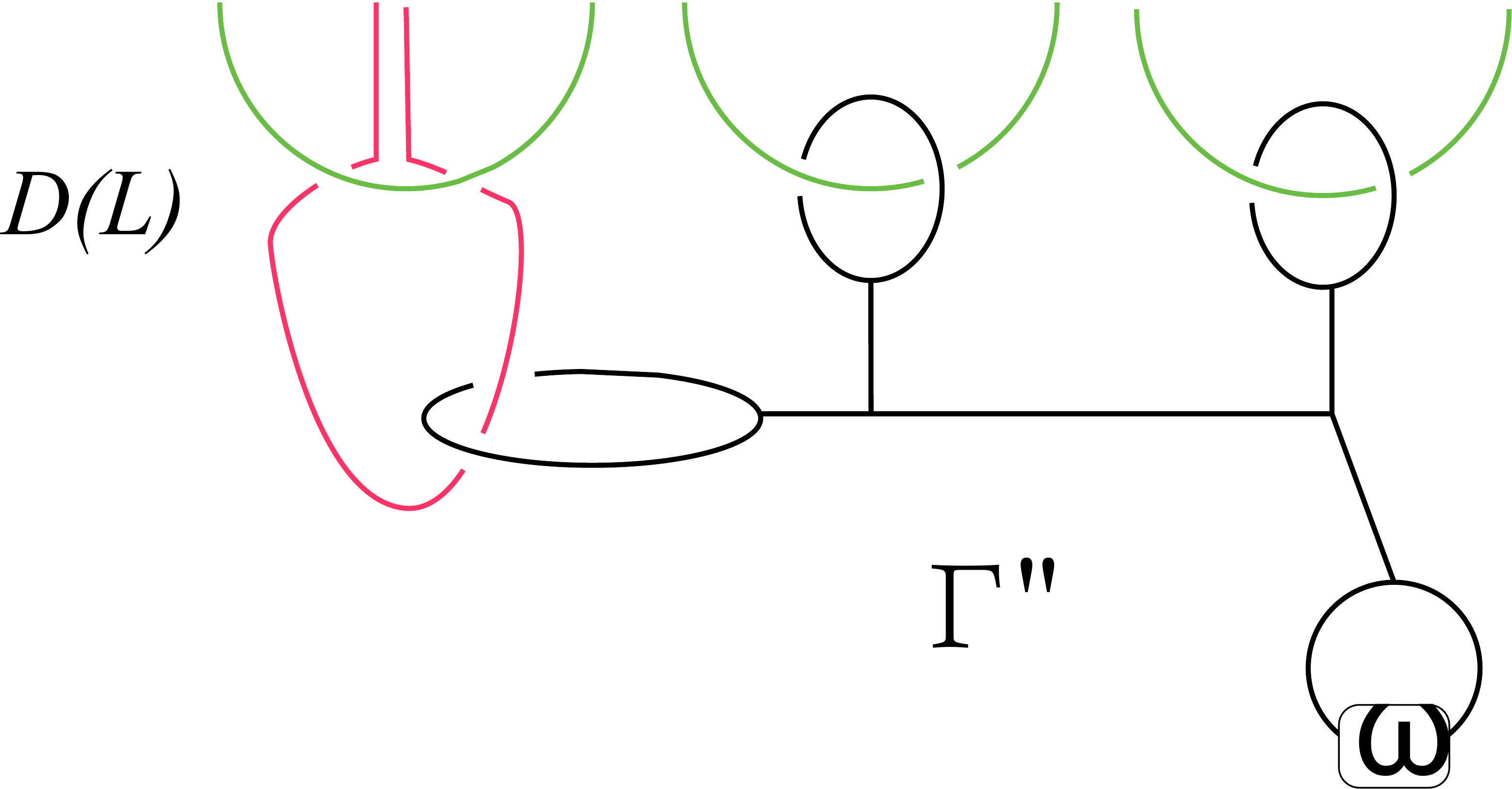}
\end{center}
\caption{
The case $k=3$ from the proof of Proposition~\ref{prop:induct}: 
Upper left: The interior of the $t^{\infty}_3$-clasper $\Gamma'$ on $L$ is disjoint from both Seifert surfaces (which are suppressed from view). Green arcs are from $L_1$ and the blue arc is from $L_2$. 
Upper right: Splitting off a Y-subtree by inserting a Hopf-pair of leaves into an edge of $\Gamma'$ and then doing the Y-surgery on $L$ yields $L_\textrm{Y}$, which is isotopic to $L$, and a further surgery on the clasper $\Gamma''$ would also yield $L_{\Gamma'}$. The red curve shows the intersection between extensions of the Seifert surfaces for $L$ to Seifert surfaces for $L_\textrm{Y}$ (which has been band-summed to the original characteristic curve, as per Remark~\ref{rem:connect}).
Lower: The clasper $\Gamma''$ is a $t^{\infty}_2$-clasper on $D(L)$, with $\omega(\Gamma'')=\omega(\Gamma')$.  Removing $\Gamma''$, the new characteristic curve is isotopic to the old one since it contracts along the band. Hence $D(L_{\Gamma'})=D(L)_{\Gamma''}$.}
\label{fig:main-induction}
\end{figure}

\begin{proposition}\label{prop:twisted-zero-trees}
If $T$ is a twisted tree which has exactly one $2$-label, such that $T\neq t^{\iinfty}_k$ for any $k$, and $\Gamma$ is any $T$-clasper on $L$, then $\beta^i(L)=\beta^i(L_\Gamma)$ for all $i\geq 1$.
\end{proposition}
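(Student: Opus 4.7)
The plan is to induct on the order $k$ of $T$. The base case $k=1$ is vacuous, since the only twisted tree with one $2$-label, one $\iinfty$-label, and a single $1$-label is $t_1^\iinfty$. So assume $k \geq 2$ and $T \neq t_k^\iinfty$.

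First I would invoke Lemma~\ref{lem:clasper-off-seifert-surfaces} to replace $\Gamma$ by a $T$-clasper $\Gamma'$ whose interior is disjoint from both Seifert surfaces $\Sigma_1,\Sigma_2$ of $L$, with $\beta^i(L_\Gamma)=\beta^i(L_{\Gamma'})$ for all $i \geq 1$. The clasper-surgery diffeomorphism of $S^3$ is then supported in a regular neighborhood of $\Gamma'$ disjoint from $\Sigma_1\cup\Sigma_2$, so it fixes these surfaces. Hence they remain Seifert surfaces for $L_{\Gamma'}$ with the same characteristic curve $\chi$ and framing, giving $\beta^1(L_{\Gamma'})=\omega(\chi)=\beta^1(L)$ immediately and reducing the problem to showing $\beta^i(L_{\Gamma'})=\beta^i(L)$ for $i \geq 2$.

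For $i \geq 2$, the plan is to apply the Y-splitting construction from Proposition~\ref{prop:induct} (cf.\ Figure~\ref{fig:main-induction}) at the trivalent vertex $v$ adjacent to the $2$-leaf of $T$, producing $D(L_{\Gamma'})=D(L)_{\Gamma''}$ for some clasper $\Gamma''$ on $D(L)$ whose tree $T''$ is obtained from $T$ by contracting the $\{2,a\}$-cherry at $v$ (where $a$ labels the other leaf at $v$) into a single new $2$-leaf inherited from the characteristic curve of $D(L)$. When $a=1$, $T''$ is a twisted tree of order $k-1$ with one $2$-label and one $\iinfty$-label, and the key combinatorial observation is that re-attaching the contracted cherry recovers $T$; so $T''=t_{k-1}^\iinfty$ would force $T=t_k^\iinfty$, contradicting our hypothesis. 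Thus $T''\neq t_{k-1}^\iinfty$ and the inductive hypothesis applies, giving $\beta^i(L_{\Gamma'})=\beta^{i-1}(D(L_{\Gamma'}))=\beta^{i-1}(D(L)_{\Gamma''})=\beta^{i-1}(D(L))=\beta^i(L)$.

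It remains to cover the exceptional cases where $v$ does not admit a $\{2,1\}$-cherry of the above form: either $a=\iinfty$ (so $T$ carries a $\{2,\iinfty\}$-cherry at $v$), or $v$ sits at the junction of two positive-order subtrees with no leaf at $v$ besides the $2$-leaf. In the $\{2,\iinfty\}$-cherry case I would argue directly, using the presence of the twisted leaf adjacent to the $2$-leaf to untwist, via a single homotopy of $L_2$, the strands of $L_2$ produced by $2$-rooted surgery; combined with type-$1$ finiteness with respect to $L_2$ (Proposition~\ref{prop:fintype}) this shows the effect on each $\beta^i$ is trivial. In the remaining cases, a preliminary application of geometric IHX-moves (via Theorem~\ref{thm:clasper-concordance} and \cite{CST}) is used to reshape $T$ near $v$ so as to create a $(2,1)$-cherry, at the cost of introducing trees of strictly higher order whose clasper-surgery effect on $\beta^i$ for $i\leq k$ vanishes by Proposition~\ref{prop:above-order}. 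The main obstacle will be carefully verifying these exceptional cases, particularly checking that the IHX-reshaping never accidentally creates new low-order $\beta$-bad trees outside the scope of the earlier results of this section.
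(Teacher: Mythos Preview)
Your second paragraph contains a genuine error. Lemma~\ref{lem:clasper-off-seifert-surfaces} only makes the \emph{interior} of $\Gamma'$ (the tree part, excluding leaves and caps) disjoint from $\Sigma_1\cup\Sigma_2$. The simple leaves of $\Gamma'$ still bound caps meeting $L_1$ or $L_2$, and hence meeting the Seifert surfaces in clasp singularities. Clasper surgery is supported in a neighborhood of the entire clasper including its leaves, so the surgery drags strands of $L$ through that neighborhood; the original $\Sigma_j$ are \emph{not} Seifert surfaces for $L_{\Gamma'}$, and you cannot conclude $\beta^1(L_{\Gamma'})=\omega(\chi)=\beta^1(L)$ for free. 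In the paper this is exactly the work done in each case: one must explicitly extend $\Sigma_1,\Sigma_2$ to Seifert surfaces for $L_{\Gamma'}$ and check what happens to the characteristic curve.

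Your handling of the exceptional cases is also incomplete. In the $\{2,\iinfty\}$-cherry case you propose a single homotopy $H$ of $L_2$ (untwisting near the $\iinfty$-leaf after $2$-rooted surgery) and appeal to type-$1$ finiteness. But type~$1$ with respect to $L_2$ requires \emph{two} disjointly supported homotopies of $L_2$, and since $T$ has only one $2$-label (used as root) the remaining non-root simple leaves all sit on $L_1$; there is no second $L_2$-homotopy of the sort used in Corollary~\ref{cor:three-twos}. The paper instead treats this case (its Case~2) directly: after the Y-split, the twisted band is absorbed into an extension of $\Sigma_2$ and the other clasper is capped off inside $\Sigma_1$, so the characteristic curve is literally unchanged and $D(L_\Gamma)=D(L)$. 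Likewise, for the case where the $2$-vertex has no dual univalent vertex (the paper's Case~1), the paper gives a direct surface extension showing $D(L_\Gamma)=D(L)$, avoiding your proposed detour through IHX-reshaping. Your inductive Case~3 argument (the $\{2,1\}$-cherry) is essentially the paper's, but the base cases need the direct constructions rather than the shortcuts you sketched.
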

\begin{proof}
By Lemma~\ref{lem:clasper-off-seifert-surfaces}, we may assume that the interior of $\Gamma$ is disjoint from the surfaces $\Sigma_1$ and $\Sigma_2$. The proof will proceed by covering the three possible cases:

A pair of vertices is said to be \emph{dual} if they are connected by edges to a common trivalent vertex.

\textbf{Case 1:} The $2$-labeled vertex of $T$ is not dual to any univalent vertex. Insert Hopf-pairs of leaves into two edges of $\Gamma$ to split $\Gamma$ into three claspers, one of which is a Y-clasper having a leaf linking $L_2$. (Surgery on all three of these claspers is equivalent to surgery on $\Gamma$.) The top pictures in Figure~\ref{fig:twisted-zero-trees} show 
the result of performing the Y-surgery using the leaf linking $L_2$ as a root. As can be seen in the upper right picture of Figure~\ref{fig:twisted-zero-trees},  
the original Seifert surfaces for $L$ can be extended to Seifert surfaces on $L_{\Gamma}$ without changing the characteristic curve: The indicated (blue) genus one piece is added to $\Sigma_2$ in the complement of the other two claspers. And after performing the other two clasper surgeries, $\Sigma_1$ (green) can be extended to a Seifert surface for the first component of $L_{\Gamma}$ inside a neighborhood of these claspers, so no new intersections are created. Thus $D(L_\Gamma)=D(L)$ and so $\Gamma$-surgery preserves all $\beta^i$.

\textbf{Case 2:} 
The $2$-labeled vertex of $T$ is dual to the $\iinfty$-labeled vertex. 
First note that in this case the $2$-labeled vertex can not also be dual to a $1$-labeled vertex, by the assumption $T\neq t^\iinfty_1$. Split $\Gamma$ into two claspers, one of which is a Y-clasper with one twisted leaf and one leaf linking $L_2$.   
The middle pictures in Figure~\ref{fig:twisted-zero-trees} show 
the result of performing the Y-surgery using the leaf linking $L_2$ as a root. As can be seen in the middle right picture of Figure~\ref{fig:twisted-zero-trees},  
the original Seifert surfaces for $L$ can be extended to Seifert surfaces on $L_{\Gamma}$ without changing the characteristic curve: The indicated (blue) genus one piece with the $\omega(\Gamma)$-twisted band is added to $\Sigma_2$ in the complement of the other clasper. And after performing the other clasper surgery, $\Sigma_1$ (green) can be extended to a Seifert surface for the first component of $L_{\Gamma}$ inside a neighborhood of this clasper, so no new intersections are created. Thus $D(L_\Gamma)=D(L)$ and so $\Gamma$-surgery preserves all $\beta^i$.

\textbf{Case 3:} 
The $2$-labeled vertex of $T$ is dual to a $1$-labeled vertex. 
Note that in this case the $2$-labeled vertex can not also be dual to the $\iinfty$-labeled vertex, by the assumption $T\neq t^\iinfty_1$.
Split $\Gamma$ into two claspers, $\Gamma'$ and a Y-clasper with one leaf linking $L_2$, as in the bottom pictures of Figure~\ref{fig:twisted-zero-trees}.
The bottom right picture in Figure~\ref{fig:twisted-zero-trees} shows 
the result of performing the Y-surgery using the leaf linking $L_2$ as a root.
A Seifert surface $\Sigma'_2$ for the new second component can be constructed by adding the indicated (blue) genus one piece to the original $\Sigma_2$.  Now $\Sigma'_2$ is in the complement of $\Gamma'$, but $\Sigma'_2$ has a ribbon intersection with the original $\Sigma_1$.  
After surgery on $\Gamma'$ (yielding $L_{\Gamma}$), a Seifert surface $\Sigma'_1$ for component $1$ of $L_{\Gamma}$ can be constructed by resolving the ribbon intersection by adding a tube that runs along a subarc of component $2$ of $L_{\Gamma}$, and extending $\Sigma_1$ in a neighborhood of $\Gamma'$.  Now the characteristic curve $\chi'=\Sigma'_1\cap \Sigma'_2$ for $L_{\Gamma}$ has a new loop which runs over one band of $\Sigma'_2$ and links $\Gamma'$. This new loop is connected to the original $\chi$ for $L$ by a band.  
This new $\chi'$ has no additional twists in it, and moreover the derived link $D(L)$ is given by surgery on the clasper $\Gamma'$ on $(L_1,\chi')$ whose tree is of order one less than the order of $T$. Now the assumption $T\neq t^\iinfty_k$ means that we can proceed inductively, since iterating this reduction will eventually lead to Case 1 or Case 2.
\end{proof}

\begin{figure}
\begin{center}
Case 1: \begin{minipage}{8cm}\includegraphics[width=8cm]{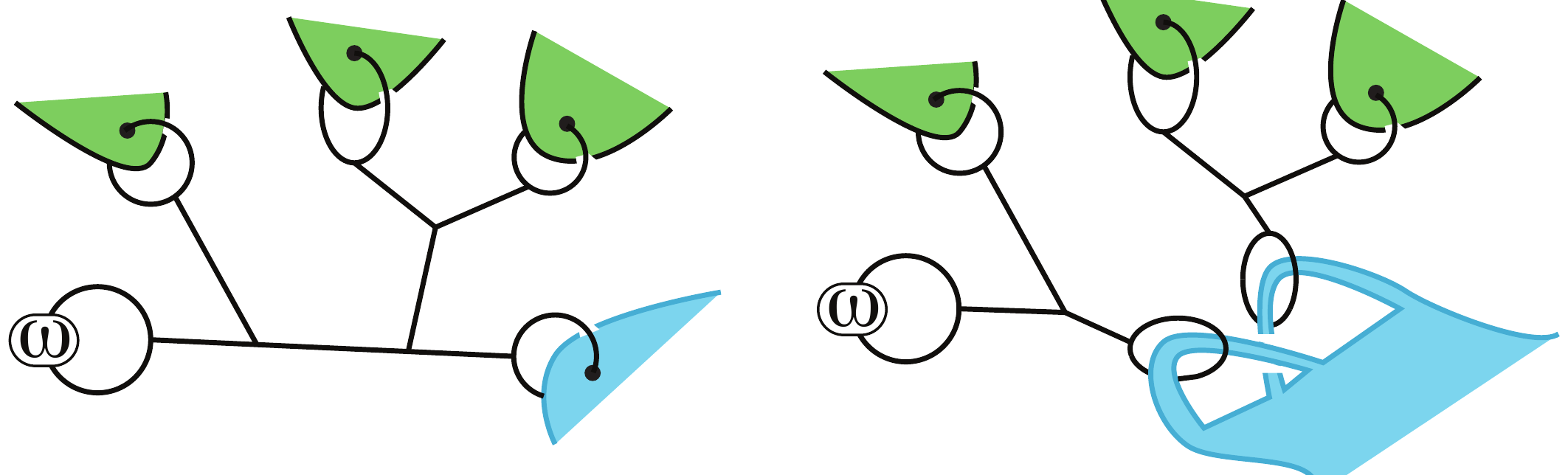}\end{minipage}\\
Case 2:\begin{minipage}{7cm}\includegraphics[width=7cm]{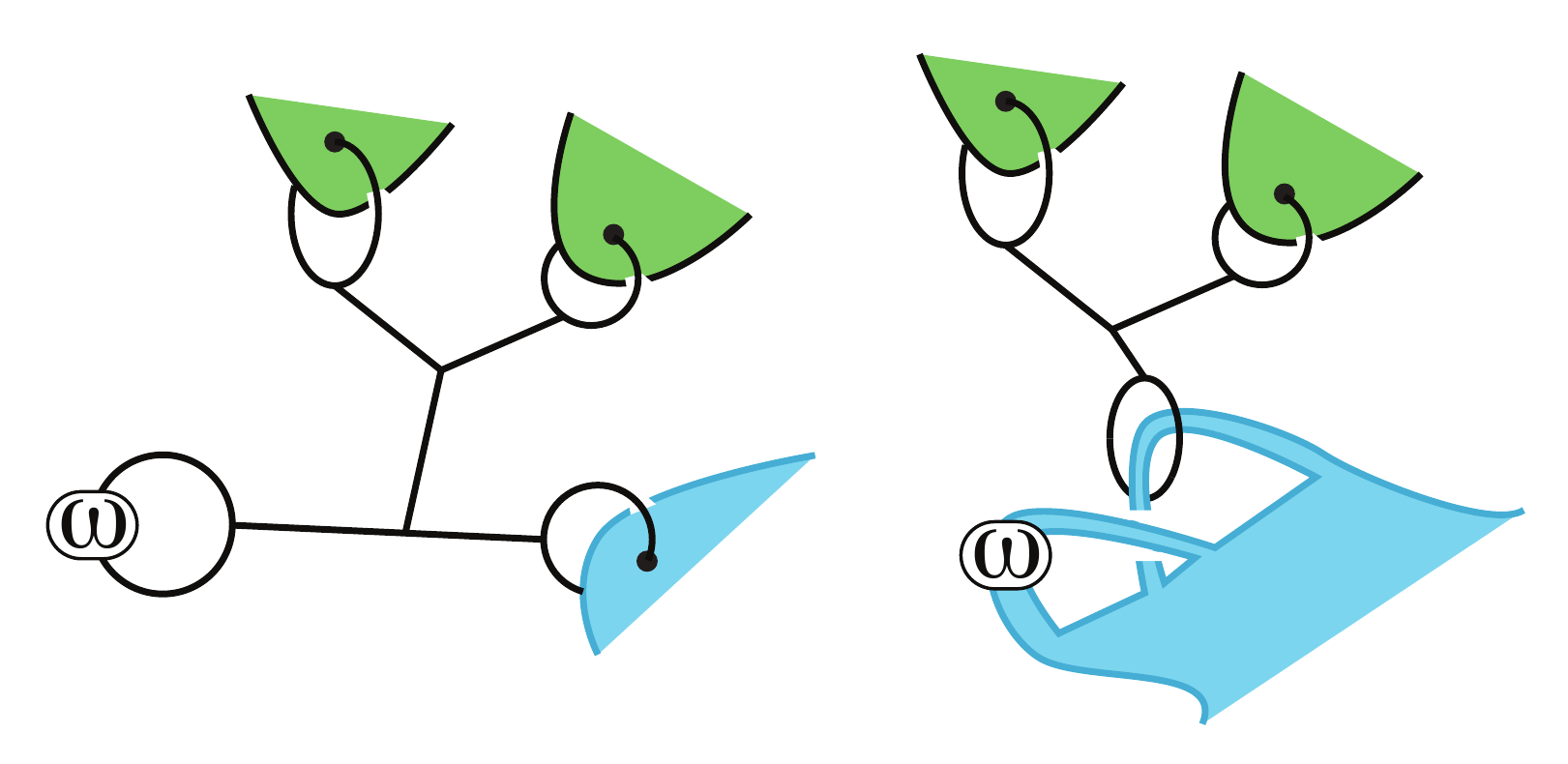}\end{minipage}\\
Case 3:\begin{minipage}{11cm}\includegraphics[width=11cm]{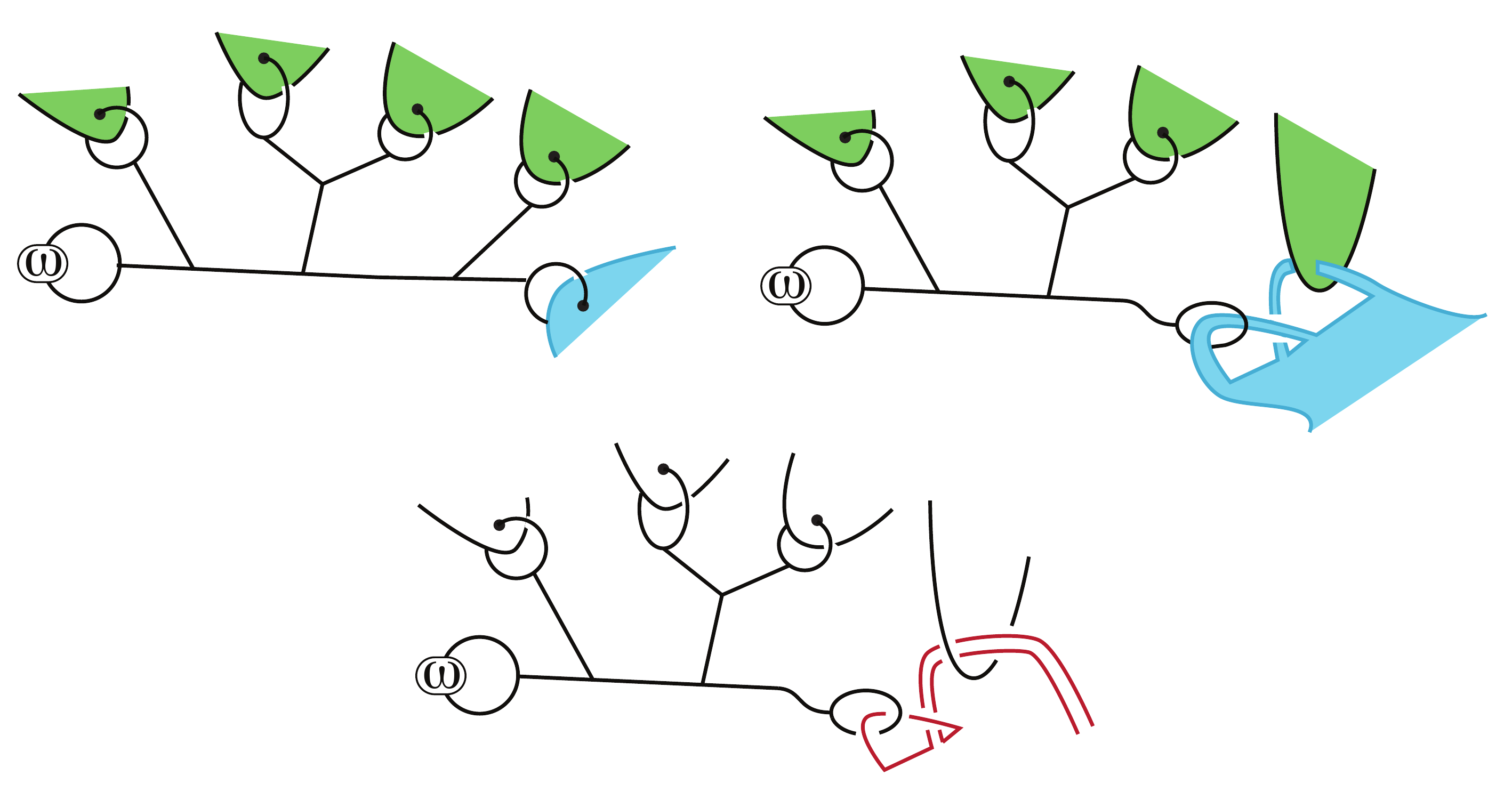}\end{minipage}
\end{center}
\caption{From the proof of Proposition~\ref{prop:twisted-zero-trees}. 
}\label{fig:twisted-zero-trees}
\end{figure}

Recall that $t_n$ is the order $n$ framed tree having two $2$-labels and $n$ $1$-labels as in Figure~\ref{fig:t-tree}.

\begin{proposition}\label{prop:framed-zero-trees}
If $T$ is a framed tree which has two $2$-labels and $n$ $1$-labels, for $n\geq 2$, such that $T\neq t_n$, 
and $\Gamma$ is any $T$-clasper on $L$, then $\beta^i(L)=\beta^i(L_\Gamma)$ for all $i\geq 1$.
\end{proposition}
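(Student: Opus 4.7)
The plan is to adapt the structure of the proof of Proposition~\ref{prop:twisted-zero-trees} to the framed setting with two $2$-labels, using induction on the order $n$ of $T$.

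First, I would prove a framed analog of Lemma~\ref{lem:clasper-off-seifert-surfaces}. The argument is essentially the same: use one $2$-labeled leaf of $\Gamma$ as the root for surgery, and use the other $2$-labeled leaf to produce an auxiliary homotopy $H_1$ of $L_2$ which pushes the corresponding arc of $L_2$ out of the second $2$-labeled cap, rendering $(L_\Gamma)_{H_1}=L$. A pushing-off homotopy $H_2$ removes interior intersections of $\Gamma$ with $\Sigma_2$. The alternating-sum identity from Proposition~\ref{prop:fintype} then yields $\beta^i(L_\Gamma)=\beta^i((L_\Gamma)_{H_2})$, which equals $\beta^i(L_{\Gamma'})$ for a modified $T$-clasper $\Gamma'$ disjoint from $\Sigma_2$. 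Pushing $\Gamma'$ off $\Sigma_1$ toward its other leaves (as in the last paragraph of the proof of Lemma~\ref{lem:clasper-off-seifert-surfaces}) completes the reduction, so I may assume henceforth that the interior of $\Gamma$ is disjoint from both $\Sigma_1$ and $\Sigma_2$.

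Next I would perform a case analysis on the combinatorial structure of $T$, mirroring the three cases of Proposition~\ref{prop:twisted-zero-trees}. In \textbf{Case 1}, some $2$-labeled vertex of $T$ is not dual to any univalent vertex; splitting off a Y-clasper at this vertex with two Hopf leaves (one into each adjacent subtree), the subsequent Y-surgery affects only $L_2$, and extensions of $\Sigma_1,\Sigma_2$ can be arranged disjoint from the remaining claspers so that $\chi$ is preserved, giving $D(L_\Gamma)=D(L)$. In \textbf{Case 2}, the two $2$-labels are dual; splitting off a Y-clasper at their common trivalent vertex, I extend $\Sigma_2$ by a genus-one piece containing both new bands of $L_2$, disjoint from $\Sigma_1$, so again $\chi$ is preserved. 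In \textbf{Case 3}, each of the two $2$-labeled vertices lies in a $(1,2)$-ear; splitting off a Y-clasper at one such ear and proceeding as in Case 3 of Proposition~\ref{prop:twisted-zero-trees}, one extends $\Sigma_2$ to $\Sigma_2'$ by a genus-one piece and $\Sigma_1$ to $\Sigma_1'$ by tubing along a subarc of the new second component to resolve a ribbon intersection. The new characteristic curve $\chi'$ is the old $\chi$ band-summed to an untwisted loop, and $D(L_\Gamma)$ is realized as surgery on a clasper $\Gamma'$ on $(L_1,\chi')$ whose tree $T'$ has order $n-1$ and exactly two $2$-labels.

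To close the induction in Case 3, I need $T' \neq t_{n-1}$, which is where the hypothesis $T \neq t_n$ is used: if $T'$ were the caterpillar $t_{n-1}$ with $2$-labels at both ends, then reattaching the $(1,2)$-ear at $T'$'s new end $2$-label recovers $T$ as the caterpillar $t_n$, contradicting the assumption. The base case $n=2$ presents no difficulty, since the only non-$t_2$ framed tree of this form has the structure with $(1,1)$ at one end and $(2,2)$ at the other, which falls directly under Case 2. The main obstacle is the geometric verification in Case 3: ensuring that the tubing and band-summing used to construct $\Sigma_1',\Sigma_2'$ introduce no new twisting into $\chi'$ and that $\chi'$ genuinely plays the role of the characteristic curve of $D(L_\Gamma)$. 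Once this is established, the induction concludes and $\beta^i(L_\Gamma)=\beta^i(L)$ for all $i\geq 1$.
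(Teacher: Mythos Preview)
Your inductive adaptation of Proposition~\ref{prop:twisted-zero-trees} has a genuine gap, stemming from the presence of the \emph{second} $2$-label. The proof of Proposition~\ref{prop:twisted-zero-trees} works because there is only one $2$-labeled leaf: once the Y-clasper containing it has been surgered, every remaining clasper has link-leaves only on $L_1$ (plus possibly a twisted leaf), so subsequent surgeries modify only $L_1$ and the extension of $\Sigma_1$ can be kept disjoint from $\Sigma_2$. In your Case~3 this fails. After splitting off a Y at one $(1,2)$-ear, the remaining clasper $\Gamma'$ still carries the \emph{other} $2$-labeled leaf, which is a meridian to $L_2$ far from the Y. That leaf is not a meridian to $\chi'$: its simple cap meets $L_2$ at a point where you arranged the interior of $\Gamma$ to be disjoint from $\Sigma_1$, so the characteristic curve does not pass through it. Hence $\Gamma'$ is \emph{not} a simple tree clasper on $(L_1,\chi')$ with two $2$-labels as you assert; one of the purported $2$-leaves links the wrong curve, and the induction does not close. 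The same obstruction undermines your Cases~1 and~2: after your Y-surgery the leftover pieces either retain a direct $2$-leaf on $L_2$ (Case~1) or acquire a Hopf-leaf that now links $L_2'$ and leaves you with a framed clasper carrying a single $2$-label---a $\beta$-bad tree type---so you cannot conclude that the remaining surgeries preserve $\chi$ or $\beta^i$.

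The paper's proof takes a different route that avoids peeling off the $2$-labels one at a time. The hypothesis $T\neq t_n$ is used to find an internal edge of $T$ with both $2$-labels on the same side: one splits $\Gamma$ into five claspers, an H-shaped $\Gamma_0$ together with $\Gamma_1,\Gamma_2$ (whose link-leaves lie on $L_1$) attached at one dual pair and $\Gamma_3,\Gamma_4$ (each carrying one $2$-leaf) at the other. After surgering $\Gamma_1,\dots,\Gamma_4$ (which leaves $L$ unchanged since each has a clean Hopf leaf), parallel $L_1$-strands run through one dual pair of $\Gamma_0$'s leaves and parallel $L_2$-strands through the other. Repeated zips then reduce everything to surgeries along claspers of the single type $\,^1_1>\!\!\!-\!\!\!\!-\!\!\!\!\!-\!\!\!<^{\,2}_{\,2}$, and this base case is handled by an explicit Seifert-surface picture showing $D(L_\Gamma)=D(L)$ with no new twisting. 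The key idea you are missing is to \emph{group} the two $2$-labels together via the splitting, rather than attempting to remove them sequentially.
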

 
\begin{proof}
Consider such a $T$-clasper on $L$. Split $\Gamma$ by inserting Hopf-pairs of leaves into some edges in such a way that yields a union of five claspers $\Gamma_0,\Gamma_1,\Gamma_2,\Gamma_3,\Gamma_4$ with the following properties: $\Gamma_0$ has 4 leaves. A dual pair of leaves of $\Gamma_0$ each link leaves of $\Gamma_1$ and $\Gamma_2$, while the other dual pair of leaves of $\Gamma_0$ each link $\Gamma_3$ and $\Gamma_4$.  (A pair of leaves is said to be \emph{dual} if they are connected by edges to a common trivalent vertex.) 
Moreover, the other leaves of $\Gamma_1$ and $\Gamma_2$ are meridians to $L_1$, while there is a single leaf of each of $\Gamma_3$ and $\Gamma_4$ which is a meridian to $L_2$. 
Such a splitting of $\Gamma$ is possible because $T\neq t_n$.

Now do surgery on $\Gamma_1\cup \Gamma_2\cup \Gamma_3\cup \Gamma_4$, where we take the roots of  $\Gamma_3$ and $\Gamma_4$ to be the leaves linking $L_2$. This surgery does not change $L$ because each of these claspers has a leaf which bounds a clean cap (although these leaves do link the leaves of $\Gamma_0$). Looking at $\Gamma_0$ on this link, we see that parallel strands of $L_1$ go through one dual pair of leaves of $\Gamma_0$ whereas parallel strands of $L_2$ go through the other dual pair.

Now using the zip construction multiple times to reduce the number of strands through the leaves to $1$ each, surgery on $\Gamma_0$ becomes equivalent to a sequence of surgeries on simple $t$-claspers, with $t= \,^1_1>\!\!\!-\!\!\!\!-\!\!\!\!\!-\!\!\!<^{\,2}_{\,2} $. So it suffices to show that such $t$-surgeries do not affect Cochran invariants.

Consider a $t$-clasper $\Gamma$, with $t= \,^1_1>\!\!\!\!-\!\!\!\!-\!\!\!\!\!-\!\!\!\!<^{\,2}_{\,2} $. We may assume that the caps of $\Gamma$ only meet the Seifert surfaces in four clasp singularities, and 
we may assume that that the interior of $\Gamma$ is disjoint from $\Sigma_2$ by the following argument:
Let $L_\Gamma$ denote the result of $t$-surgery using one of the $2$-labeled leaves as the root. Denote by $H_1$ the homotopy of $L_\Gamma$ which pushes the strand of component $2$ that ran through the non-root leaf across the strands of 
component $2$ that now run along where the non-root leaf used to be. And denote by $H_2$ the homotopy of component $2$ of $L_\Gamma$ induced by pushing $\Gamma$ off of $\Sigma_2$ (so $H_2$ moves strands that run along where $\Gamma$ used to be).
Now, applying
Proposition~\ref{prop:fintype} and noting that
$(L_\Gamma)_{H_1}=L=(L_\Gamma)_{H_1\cup H_2}$ (since the results of $H_1$ and $H_1\cup H_2$ are the same as clasper surgeries on $L$ by claspers that each have a clean leaf) gives:
$$
0=\beta^i(L_\Gamma)-\beta^i((L_\Gamma)_{H_1})-\beta^i((L_\Gamma)_{H_2})+\beta^i((L_\Gamma)_{H_1\cup H_2})=\beta^i(L_\Gamma)-\beta^i((L_\Gamma)_{H_2})
$$
So pushing $\Gamma$ off of $\Sigma_2$ by $H_2$ gets the interior of $\Gamma$ disjoint from $\Sigma_2$ without affecting the 
$\beta^i(L)$.
 
Furthermore, any intersections between $\Sigma_1$  and the interior of $\Gamma$ can all be pushed onto an edge adjacent to a single $2$-labeled leaf: Any intersections between $\Sigma_1$ and the $1$-labeled edges of $\Gamma$ can be pushed into $\Sigma_1$, and the resulting ribbon singularities resolved without affecting the characteristic curve. 

One can now draw an explicit picture of the derived link (Figure~\ref{fig:1122tree}). The new intersections of $\Sigma_1$ and $\Sigma_2$ come from the intersections of $\Sigma_1$ with the $2$-labeled edge of $\Gamma$, and are connected by bands to the rest of the characteristic curve. These new curves are all trivial, implying  
that $D(L_{\Gamma})=D(L)$. Furthermore, the twisting of these additions to the characteristic curve are all $0$, so that $\beta^1(L)=\beta^1(L_\Gamma)$. These last two facts are sufficient to show that $\beta^i(L)=\beta^i(L_\Gamma)$ for all $i\geq 1$.
\end{proof}

\begin{figure}
\begin{center}
\includegraphics[width=\linewidth]{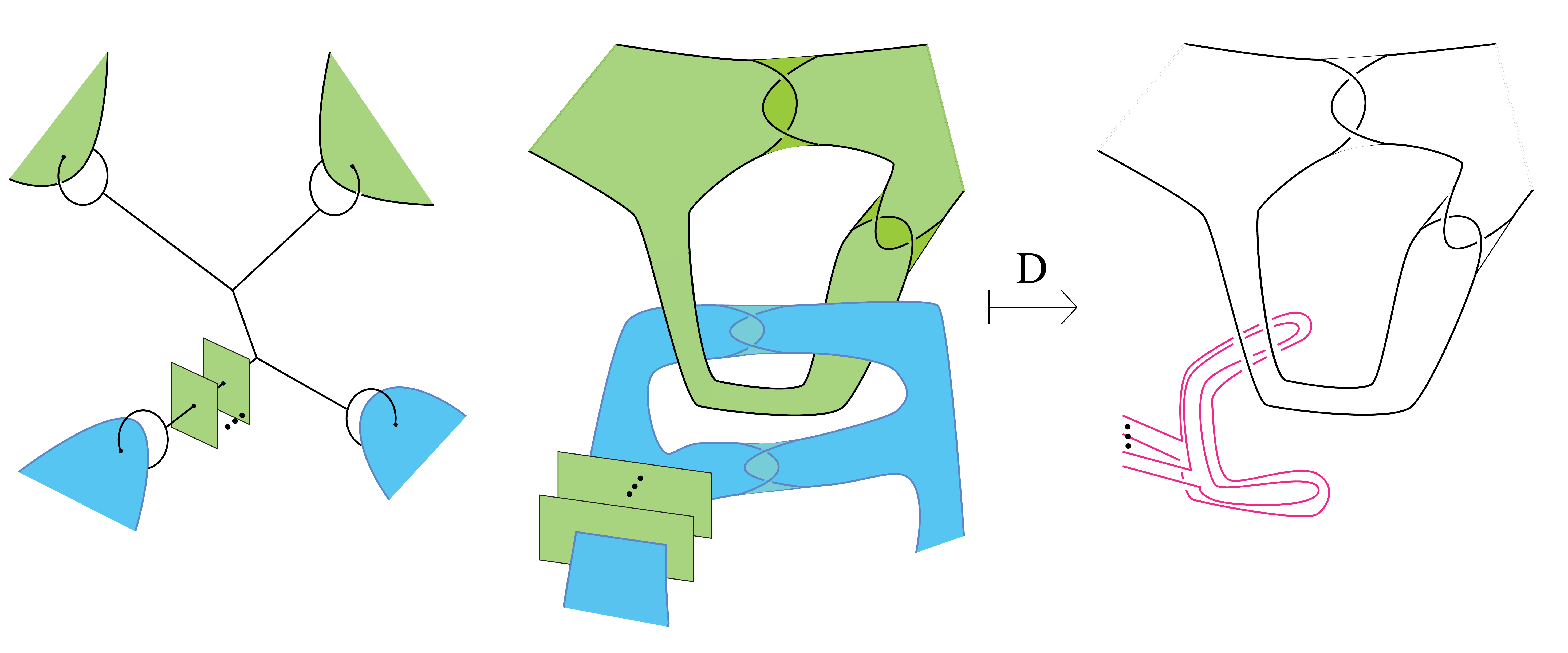}
\end{center}
\caption{
A $t$-surgery on $L$, for $t= \,^1_1>\!\!\!-\!\!\!\!-\!\!\!\!\!-\!\!\!<^{\,2}_{\,2}$, with sheets of $\Sigma_1$ (green) intersecting the clasper (the blue sheets are from $\Sigma_2$). The picture on the right shows $D(L_{\Gamma})=D(L)$, with the characteristic curve in red.
}\label{fig:1122tree}
\end{figure}

\begin{proposition}\label{prop:above-order}
If $T$ is a framed tree of order $> 2k+1$, or a twisted tree of order $> k$, and $\Gamma$ is any $T$-clasper on $L$, then 
$\beta^i(L)=\beta^i(L_\Gamma)$ for all $i\geq 1$.
\end{proposition}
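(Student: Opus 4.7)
The plan is to prove the proposition by case analysis on the labelling of $T$, reducing to earlier results wherever possible and handling the residual $\beta$-bad cases by induction on the order of $T$ together with the index $i$, using the derived-link relation $\beta^i = \beta^{i-1} \circ D$ from Definition~\ref{def:beta}.

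First I would clear the cases that follow from prior results. If $T$ is framed with at least three $2$-labels or twisted with at least two $2$-labels, Corollary~\ref{cor:three-twos} applies. If $T$ is framed with exactly two $2$-labels and $T \not\cong t_n$, Proposition~\ref{prop:framed-zero-trees} applies; likewise Proposition~\ref{prop:twisted-zero-trees} handles twisted $T$ with exactly one $2$-label and $T \not\cong t^\iinfty_j$. If $T \cong t^\iinfty_j$, then the order hypothesis forces $j>k$, so Corollary~\ref{cor:gamma-t} changes only $\beta^j$, which lies outside the range $i \leq k$ relevant to an order $2k$ Cochran tower.

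The remaining cases are the genuinely $\beta$-bad high-order trees: framed $T$ with zero or one $2$-label, framed $T \cong t_n$ with $n > 2k+1$, and twisted $T$ with no $2$-labels. For these I would first adapt Lemma~\ref{lem:clasper-off-seifert-surfaces} (using the disjointly supported homotopy technique of Proposition~\ref{prop:fintype}) to arrange that the interior of $\Gamma$ is disjoint from both Seifert surfaces, modifying $\Sigma_1$ and $\Sigma_2$ only by adding genus and resolving ribbon singularities as in Remark~\ref{rem:connect}. Then, analogously to Propositions~\ref{prop:framed-zero-trees} and~\ref{prop:twisted-zero-trees}, I would split $\Gamma$ by inserting Hopf-pairs of leaves to isolate a Y-subclasper whose surgery extends the Seifert surfaces by untwisted bands on $\chi$, while the residual clasper descends to a clasper on $D(L)$ whose tree remains in the same restricted family but with strictly smaller order. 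Induction on the order of $T$ and on $i$, with base case $i=1$ supplied by the invariance of the Sato-Levine twisting $\omega(\chi)$, then closes the argument.

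The main obstacle I foresee is handling $T \cong t_n$ with $n > 2k+1$, since $t_n$-surgery genuinely contributes to the Milnor invariant $\mu_{1^n 2^2}$, which lifts to $\beta^{n/2}$ for even $n$ (compare Example~\ref{ex:t2-bad}). The order hypothesis forces $n/2 > k$, so this contribution falls outside the range of Cochran invariants computable from an order $2k$ Cochran tower; the plan is to make this precise by showing that a $t_n$-surgery descends, after isotopy and the band-sum procedure of Remark~\ref{rem:connect}, to a $t_{n-2}$-surgery on the derived link, allowing an induction that bottoms out once the order of the contribution exceeds the depth of interest. The framed trees with a single $2$-label present a similar inductive descent but with more delicate bookkeeping of the characteristic curve, and I expect this case to require the most technical care.
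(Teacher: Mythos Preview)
Your approach is genuinely different from the paper's, and considerably more laborious. The paper gives a five-line global argument that avoids any case analysis on the shape of $T$: choose bands $b$ so that $L\#_b\bar L$ is slice, hence has all $\mu$-invariants and therefore all $\beta^i$ equal to zero (via \cite[Thm.~6.10]{Co1}); observe that $(L\#_b\bar L)_\Gamma = L_\Gamma\#_b\bar L$; invoke Habiro's result \cite{Hab00} that clasper surgery of order $>2k$ (respectively twisted order $>k$) preserves vanishing of $\mu$-invariants through order $2k$, so $\beta^i(L_\Gamma\#_b\bar L)=0$ for $i\leq k$; and finish with additivity of $\beta^i$ under band sum to get $\beta^i(L_\Gamma)=\beta^i(L)$ for $i\leq k$. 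Note in particular that the paper's proof only establishes the conclusion for $i\leq k$, not for all $i\geq 1$ as literally stated (and indeed the full statement fails, e.g.\ for $T=t^\iinfty_j$ with $j>k$); you already implicitly restrict to $i\leq k$, which is correct and is all that is used downstream.

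Your case analysis correctly dispatches the non-$\beta$-bad trees via the earlier results, but the residual $\beta$-bad cases are where your plan becomes speculative. The descent argument you sketch --- splitting off a Y-subclasper so that the residual clasper acts on $D(L)$ with order decreased --- is modeled on Proposition~\ref{prop:induct} and the proofs of Propositions~\ref{prop:framed-zero-trees} and~\ref{prop:twisted-zero-trees}, but those arguments all exploit a $2$-labeled leaf as a root and use the type~$1$ property with respect to $L_2$. For framed trees with \emph{no} $2$-label, or twisted trees with no $2$-label, you have no such leaf, and neither Lemma~\ref{lem:clasper-off-seifert-surfaces} nor the homotopy trick of Proposition~\ref{prop:fintype} applies as stated; your ``adapt'' and ``analogously'' hide real work here. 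Similarly, for $T\cong t_n$ with $n>2k+1$, asserting that $t_n$-surgery descends to $t_{n-2}$-surgery on $D(L)$ would require a careful surface construction that you have not supplied, and Example~\ref{ex:t2-bad} shows that $t_n$-surgery can affect $\beta^i$ at many orders simultaneously, so the bookkeeping is delicate. These cases may be salvageable, but the paper's band-sum-with-mirror trick bypasses them entirely and is what you should use.
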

\begin{proof}
First note that for some choice of bands $b$, the band-sum $L\#_b\bar{L}$ is a slice link, which therefore has vanishing $\mu$-invariants in all orders. In particular all $\beta^i$ vanish \cite[Thm.6.10]{Co1}. Now consider $(L\#_b\bar{L})_{\Gamma}=L_{\Gamma}\#_b\bar{L}$. Since $\Gamma$ preserves vanishing of $\mu$-invariants up to order $2k$ \cite{Hab00}, we have that $0=\beta^{i}(L_{\Gamma}\#_b\bar{L})=\beta^i(L_{\Gamma})-\beta^i(L)$ for $i\leq k$.
\end{proof}

\subsection{Proof of Theorem~\ref{thm:main}}\label{subsec:proof-theorem-main}
Finally, we put all the ingredients together to prove Theorem~\ref{thm:main}:
\begin{proof}
First, we show that any $L=(L_1,L_2)$  with trivial linking number and $\Arf(L_1)=0$ bounds a Cochran tower of arbitrarily high order. 

Start by taking any pair of properly immersed disks bounding the components of $L$. Since the linking number is zero, all intersections between the two disks will algebraically cancel, and so can be paired by Whitney disks (which will in general intersect each other and the two immersed disks). By performing local cusp homotopies, it can also be arranged that the disks' self-intersections are all paired by Whitney disks. This yields a twisted Whitney tower $\cW$ of order one. 
If there are no $\beta$-bad trees in $t(\cW)$ then we have a Cochran tower (of infinite order) and we are done. Otherwise, there exist $\beta$-bad trees. 
 
Throughout the following arguments for exchanging $\beta$-bad trees for higher-order trees, we use the ``order-raising'' obstruction theory
described in
\cite{CST}. We refer the reader to that paper for details, including orientation and sign conventions giving rise to coefficients for the trees in $t(\cW)$, as well as descriptions of the various twisting and IHX modifications of $\cW$.
In particular, whenever a pair of intersections points or twisted Whitney disks contribute isomorphic but oppositely-signed trees to $t(\cW)$, then that pair can be eliminated at the cost of only creating new higher-order trees in $t(\cW)$. 
Discussion of signs will be suppressed in the following constructions.

Any $\beta$-bad tree is one of the following four types:
 
{\bf The trees $\tree{1}{1}{1}$ and $\tree{1}{1}{\iinfty}$:}   
The trees $\tree{1}{1}{\iinfty}$ can all be exchanged for more of the trees $\tree{1}{1}{1}$ by the \emph{boundary-twisting} operation \cite[p.1455]{CST}. And since $\Arf(L_1)=0$, the trees $\tree{1}{1}{1}$ in $t(\cW)$ have to appear an even number of times \cite{FK,Ma}. Such trees represent $2$-torsion (by antisymmetry), so all these trees
can be eliminated from $t(\cW)$ at the cost of creating only higher-order trees.

{\bf The trees $t_{2k-1}$:} 
Applying the \emph{boundary twist} operation to a clean framed order $k$ Whitney disk $W_{(\cdots(((2,1),1),1),\cdots,1)}$ created by finger moves creates an intersection corresponding to the tree $t_{2k-1}$ at the cost of also creating a twisted Whitney disk whose associated tree is $t^\iinfty_k$. So, for any $k$, all $t_{2k-1}$-trees can be eliminated from $t(\cW)$, at the cost of only creating new $t^\iinfty_k$-trees and higher-order trees.

  {\bf The trees $t_{2k}$:} 
Applying the \emph{interior twist} operation
\cite[p.1456]{CST} to a clean order $k$ framed Whitney disk $W_{(\cdots(((2,1),1),1),\cdots,1)}$ created by finger moves creates an intersection corresponding to the tree $t_{2k}$ at the cost of only creating a $2$-twisted Whitney disk whose associated tree is $t^\iinfty_k$.  So, for any $k$, all $t_{2k}$-trees can be eliminated from $t(\cW)$, at the cost of only creating new $t^\iinfty_k$-trees and higher-order trees.

 {\bf Trees which are zero modulo IHX}:  
As described in \cite{CST}, using the (twisted) IHX construction, any tree which represents zero modulo IHX relations can be eliminated from $t(\cW)$ at the cost of creating only higher-order trees. 

Thus we can always remove $\beta$-bad trees at the cost of creating only higher-order trees, which is sufficient to establish the desired result inductively.

Secondly, we show that the $\beta^i(L)$ are the signed count of $t_i^{\iinfty}$ trees.
Decompose the order $2k$ Cochran tower into concordances and simple (twisted) clasper surgeries from the unlink to $L$, by Theorem~\ref{thm:clasper-concordance}. As shown by Cochran, concordances leave $\beta^i$ invariant. If a tree is not $\beta$-bad, then a corresponding clasper surgery can only change $\beta^i$ if it is a $t_i^{\iinfty}$-surgery, by Corollary~\ref{cor:three-twos}, Proposition~\ref{prop:framed-zero-trees} and Proposition~\ref{prop:twisted-zero-trees}. And by Corollary~\ref{cor:gamma-t}, the effect of a $t^{\iinfty}_i$-surgery is to change $\beta^i(L)$ exactly by $\omega(\Gamma)$.  Finally, clasper surgeries whose trees have order $>2k$ and twisted clasper surgeries whose trees have order $>k$ don't affect $\beta^i$ for $i\leq k$, by Proposition~\ref{prop:above-order}.
\end{proof}

\begin{remark}\label{rem:show-badness}
The construction of a Cochran tower of arbitrary height in the proof of Theorem~\ref{thm:main} follows from the fact that the Whitney tower obstruction theory allows for $\beta$-bad trees to be exchanged for $t_i^{\iinfty}$ trees and higher-order trees. Although we have given examples of how certain $\beta$-bad trees can create indeterminacies in the computation of Cochran invariants as the count of twistings on Whitney disks associated to $t_i^{\iinfty}$ trees (Examples \ref{ex:infmany}, \ref{ex:no-2-labels-are-bad} 
and \ref{ex:t2-bad}), it is possible that some of the trees we have defined to be $\beta$-bad might not create indeterminacies in this computation. 
Eliminating some trees from the $\beta$-bad list would \emph{a priori} enlarge the set of links that bound Cochran towers of infinite order, a particularly nice case where all the Cochran invariants can be computed from a single Cochran tower. 
\end{remark}

\section*{Acknowledgments}
The authors thank Max-Planck-Institut f\"ur Mathematik for its generous hospitality and support. This paper originated there during discussions in Summer 2015 when the first two authors were visiting the third. The second author is also supported by a Simons Foundation \emph{Collaboration Grant for Mathematicians}.
 
The authors would also like to thank Tim Cochran for life-long inspiration.


\begin{thebibliography}{0}
\bibitem{CDM}  {\bf S. Chmutov}, {\bf S. Duzhin} and {\bf J. Mostovoy}, \emph{Introduction to Vassiliev knot invariants.} Cambridge University Press, Cambridge, 2012. xvi+504 pp. ISBN: 978-1-107-02083-2 
\bibitem{Co} {\bf T. Cochran},  \emph{Geometric invariants of link cobordism.} Comment. Math. Helv. 60 (1985), no. 2, 291--311.

\bibitem{Co1} {\bf T. Cochran}, {\it Derivatives of links, Milnor's concordance invariants
and Massey products}, Mem. Amer. Math. Soc. Vol. 84 No. 427
(1990).
 

\bibitem{CST0}  {\bf J. Conant}, {\bf R. Schneiderman} and {\bf P. Teichner}, {\it Higher-order intersections in low-dimensional topology.} Proc. Natl. Acad. Sci. USA vol. 108, no. 20, (2011) 8131--8138.
\bibitem{CST} {\bf J. Conant}, {\bf R. Schneiderman} and {\bf P. Teichner}, \emph{Whitney tower concordance of classical links.} Geometry \& Topology 16 no. 3, (2012), 1419--1479. 

\bibitem{CST2} {\bf J. Conant}, {\bf R. Schneiderman} and {\bf P. Teichner}, \emph{Milnor invariants and twisted Whitney towers.}
Journal of Topology (2014) 7 (1), 187--224.


\bibitem{CST3} {\bf J. Conant}, {\bf R. Schneiderman} and {\bf P. Teichner}, \emph{Geometric filtrations of string links and homology cylinders}, Quantum Topology, Vol 7, Issue 2, 2016, 281--328.



\bibitem{CST6} {\bf J. Conant}, {\bf R. Schneiderman} and {\bf P. Teichner}, \emph{Whitney towers and clasper concordance.} In preparation.
\bibitem{CT1} {\bf J. Conant} and {\bf P. Teichner}, \emph{Grope cobordism of classical knots.}  Topology 43 (2004), no. 1, 119--156.
\bibitem{FK} {\bf M. Freedman} and {\bf R. Kirby}, \emph{A geometric proof of Rochlin's Theorem.}
Proceedings of Symposia in Pure mathematics Vol. 32 Part 2 (1978) 85--98.
\bibitem{FQ}  {\bf M. Freedman} and {\bf F. Quinn}, \emph{The topology of 
$4$-manifolds}, Princeton Math. Series 39 Princeton, NJ, (1990). 
\bibitem{FT} {\bf M. Freedman} and {\bf P. Teichner}, \emph{4-manifold topology. II. Dwyer's filtration and surgery kernels.}
Invent. Math. 122 (1995), no. 3, 531--557. 
\bibitem{goussarov}  {\bf M. N. Goussarov}, \emph{Interdependent modifications of links and invariants of finite degree.} Topology 37 (1998), no. 3, 595--602. 
\bibitem{Hab00} {\bf K. Habiro}, \emph{Claspers and finite type invariants of links.} Geometry \& Topology 4 (2000), 1--83.
\bibitem{Kirk} {\bf P. Kirk}, \emph{Link maps in the four sphere.} Differential topology (Siegen, 1987), 31--43,
Lecture Notes in Math., 1350, Springer, Berlin, 1988. 
\bibitem{Kojima}  {\bf S. Kojima} and {\bf M. Yamasaki}, \emph{Some new invariants of links.} Invent. Math. 54 (1979), no. 3, 213--228.
\bibitem{Ma}{\bf Y. Matsumoto} \emph{Secondary intersectional properties of 4-manifolds and Whitney's
trick.}
Proceedings of Symposia in Pure mathematics Vol. 32 Part 2 (1978) 99--107.
\bibitem{Mi} {\bf J. Milnor} \emph{Link groups.} Ann. of Math. (2) 59, (1954) 177--195. 


\bibitem{Sa} {\bf N. Sato} \emph{Cobordisms of semi-boundary links}, Topology and its Applications, 18 (1984) 225--234.



\end{thebibliography}
\end{document}